\DeclareMathOperator{\lastBorn}{\ell}
\DeclareMathOperator{\choosePath}{c}
\newcommand{\sm}{\setminus}
\setlist[itemize]{noitemsep, topsep=0pt}
\newtheorem{theorem}{Theorem}[section]
\newtheorem{lemma}[theorem]{Lemma}
\newtheorem{question}{Question}
\newcommand{\RR}{\mathbb{R}}
\title{Burling graphs revisited,\\ part II: Structure} \author{Pegah
  Pournajafi\thanks{Univ Lyon, EnsL, UCBL, CNRS, LIP, F-69342, LYON
    Cedex 07, France. Partially supported by the LABEX MILYON
    (ANR-10-LABX-0070) of Universit\'e de Lyon, within the program
    ‘‘Investissements d'Avenir’’ (ANR-11-IDEX-0007) operated by the
    French National Research Agency (ANR) and by Agence Nationale de
    la Recherche (France) under research grant ANR DIGRAPHS
    ANR-19-CE48-0013-01.}~~and Nicolas Trotignon\footnotemark[1]}
\begin{document}
	
\maketitle
	
\begin{abstract}

	The Burling sequence is a sequence of triangle-free graphs of
	increasing chromatic number. Any induced subgraph of a graph in this sequence is called a Burling graph. These graphs have attracted some attention because on one hand they have geometric representations, and on the other hand they provide counter-examples to several conjectures about bounding the chromatic
	number in classes of graphs.
	
	Using an equivalent definition from the first part of this work (called derived graphs), we study several structural properties of Burling graphs. In particular, we give decomposition theorems for the class using in-star cutsets, study holes and their interactions in Burling graphs, and analyze the effect of subdividing some arcs of a Burling graph. Using mentioned results, we introduce new techniques for providing new triangle-free graph that are not Burling graphs.

	Among other applications, we prove that wheels are not Burling graphs. This answers an open problem of the second author and disproves a conjecture of Scott and Seymour. 

\end{abstract}

\section{Introduction}

Graphs in this paper have neither loops nor multiple edges. In this
introduction they are non-oriented, but oriented graphs will sometimes
be considered in the rest of the paper.  A class of graphs is
\emph{hereditary} if it is closed under taking induced subgraphs.  A
\emph{triangle} in a graph is a set of three pairwise adjacent
vertices, and a graph is \emph{triangle-free} if it contains no
triangle.

In 1965, Burling~\cite{Burling65} proved that triangle-free
intersection graphs of axis-parallel boxes in $\mathbb{R}^3$ have
unbounded chromatic number.  The work of Burling uses geometric
terminology. However, one may rephrase it into a more modern graph
theoretic setting by defining in a combinatorial way a sequence of
triangle-free graphs, called the \emph{Burling sequence}, with
increasing chromatic number, and proving that each of them is isomorphic
to the intersection graph of a set of axis-parallel boxes in
$\mathbb{R}^3$. It was later proved that every graph in the Burling
sequence is isomorphic to the intersection graph of various
geometrical objects, see~\cite{Chalopin2014,Pawlik2012Jctb,P2022,PT:2021}.

The Burling sequence also attracted attention lately because it is a
good source of non-$\chi$-bounded classes of graphs. Let us explain
this.  A heredatiry class of graphs is \emph{$\chi$-bounded} if there
exists a function $f$ such that for every graph $G$ in the class,
$\chi(G)\leq f(\omega(G)$, where $\chi(G)$ denotes the chromatic
number of $G$ and $\omega(G)$ the maximum number of pairwise adjacent
vertices in $G$.  In~\cite{Pawlik2012Jctb}, it is shown that there
exist graphs $H$ such that graphs in the Burling sequence (and
therefore some triangle-free graphs with arbitrary large chromatic
number) do not contain any subdivision of $ H $ as an induced
subgraph. This provides counter-examples to a well-studied conjecture
of Scott (Conjecture 8 in~\cite{Scott97}), saying that for every $H$,
the class of graphs that do not contain any subdivision of $ H $ as an
induced subgraphs are $\chi$-bounded. A counter-example to Scott
conjecture is called a \emph{non-weakly pervasive graph}. Despite the
fact that Scott's conjecture is disproved in~\cite{Pawlik2012Jctb},
classifying graphs into weakly pervasive and non-weakly pervasive
remains of interest.

This paper is the second part in a series of three articles, so let us
explain briefly the content of the other parts. In all the series, we
call \emph{Burling graph} any graph that is isomorphic to an induced
subgraph of some graph in the Burling sequence. In the first part of
this work~\cite{PT:2021}, we give several equivalent definitions of
Burling graphs, including geometrical characterizations. In the third
part of this work~\cite{PT-3:2021}, our goal is to use results from
the previous parts to make progress toward understanding weakly
pervasive graphs by giving new examples of graphs that are not weakly
pervasive.

In this second part, we use one of the new equivalent definitions
from the first part~\cite{PT:2021} (namely derived graphs) to study the structure of
Burling graphs. Among other results, we give a decomposition theorem for oriented Burling graphs, characterize the subdivisions of $K_4 $ that are Burling graphs, and give a new characterization of Burling graphs (called $k$-sequential graphs). We also 
answer some open questions in $\chi$-boundedness: an open problem of the second author in~\cite{Trotignon13} and a conjecture of Scott and Seymour in~\cite{ScottSeymour17} (both concerning wheel-free graphs). Moreover, most of the results in this part have applications in the third part~\cite{PT-3:2021} as well. We postpone the explanations for these applications to part III, and in the following brief outline of the paper, we explain how the structural results of this contribution are motivated in their own rights.

In section~\ref{sec:notation}, we describe the notation that we use.

In~section~\ref{section:Derived-graphs}, we recall the definition of
derived graphs (first defined in Part I of this work~\cite{PT:2021}
where they are shown to be equivalent to Burling graphs) which are
oriented graphs obtained from a tree by some precise rules. We also
show how their arcs can be subdivided, providing generic
constructions of Burling graphs.  This study of subdivisions seems
important to us, because the class of Burling graphs is so far the
only source of non-weakly pervasive graphs.

In section~\ref{sec:kBurling}, we define the notion of nobility of a
Burling graph (roughly, it is the maximum number of vertices of the
graph that lie in a branch of a tree from which the graph can be
derived). We then show how Burling graphs of a given nobility can be
constructed from graphs of smaller nobility.  The section is mostly
motivated by the need of the so-called 2-Burling graphs that serve as
basic graphs in the decomposition theorem of the next section. 
We also present $k$-Burling graphs that enables us to provide another equivalent definition of Burling graphs.

In Section~\ref{sec:starCutsets}, we give a decomposition theorem for
Burling graphs using star cutsets.  This theorem was already proved by
Chalopin, Esperet, Li and Ossona de Mendez in~\cite{Chalopin2014}.
Our contribution is to give a stronger result for oriented Burling
graphs that we need in this part (Section~\ref{sec:holes}) and in
part~3.  Interestingly, star cutsets have been proved very useful in
the proof of the strong perfect graph theorem. They were invented by
Chv\'ata~\cite{chvatal:starcutset}l who generalized them to skew
partitions, and who proved that in some sense, star cutsets preserve
being perfect.  It is not the place here to give too much detail about
that, but briefly speaking, many classes of perfect graphs, including
the class of all perfect graphs, have decomposition theorems~\cite{chudnovsky.r.s.t:spgt} (a graph
in the class is basic or has a decomposition), and star cutsets or one
of their variants (skew partitions, double star cutsets, etc.) are
often one of the decompositions.  It is therefore striking to see a
class of triangle-free graphs of high chromatic number that can still
be decomposed into very simple graphs by star cutsets.  Moreover, we
observe here for the first time that the decomposition theorem of
Burling graphs in \cite{Chalopin2014} disproves an open question
from~\cite{CPST:subst} about star cutset preserving in some sense
bounds on the chromatic number.  We believe that this explains why the
decomposition method that was so efficient to prove the strong perfect
graph theorem did not provide many results in the more general field
of $\chi$-boundedness.

In section~\ref{sec:holes}, we describe properties of holes in Burling
graphs, where a hole is a chordless cycle of length at
least~4. Results from this section provide essential tools for the
proofs in Section~\ref{sec:examples} and for the results in part~3 of
this work.  In particular, the orientation of a hole in a Burling
graph is used intensively in the sequel. Also, how a graph is organized
regarding its holes is very often of interest in studying the
structure of many classes of graphs, so we believe that our study might
help future works about the structure of Burling graphs as well. 

In section~\ref{sec:examples}, using the results of our structural studies, we give several examples of graphs that
are not Burling.  Among these examples are wheels (graphs build from a
hole and a vertex with at least three neighbors in
that hole). This results proves that the class of wheel-free graphs is
not $\chi$-bounded, which answers negatively a question of the second
author in~\cite{Trotignon13}, and disproves a conjecture of Scott and
Seymour in~\cite{Scott18}. This result is independently proved by
Davies~\cite{davies2021trianglefree} (with a different
technique) and has been claimed by Scott and
Seymour~\cite{ScottSeymour17} in a personal communication. However, our proof, appearing first in the master's thesis of the first author \cite{report} is the first written proof of this theorem. 

To sum up, besides answering some open problems, we study here many structural aspects of Burling graphs, most
notably, their closure under subdivision, decomposition theorem,
structure and holes. We believe that these studies, along with the examples we provide, might help solving the following
questions.

\begin{question} 
	What is the list of minimal forbidden induced subgraphs that defines Burling graphs?
\end{question}

\begin{question}
	What is the complexity of deciding whether a graph is  Burling or not?
\end{question}

\section{Notation}
\label{sec:notation}

We denote by $ N(v) $ the neighborhood of $ v $, and denote by
$ N[v] $ the closed neighborhood of $ v $, that is $N(v) \cup \{v\} $. If the graph is oriented, $ N^-(v) $ and
$ N^+(v) $ denote the set of in-neighbors and out-neighbors of $ v $
respectively, and $ N^-[v] $ and $ N^+[v] $ are the closed versions of
them.

There is a difficulty regarding notations in this paper. The graphs we
are interested in will be defined from trees. More specifically, a
tree $T$ is considered and a graph $G$ is \emph{derived} from it by
following some rules that are defined in the next section.  We have $V(G) = V(T)$
but $E(G)$ and $E(T)$ are different (disjoint, in fact). Also, even if
we are originally motivated by non-oriented graphs, it turns out that
$G$ has a natural orientation, and considering it is essential in many
of our proofs.
	
So, in many situations we have to deal simultaneously with the tree,
the oriented graph derived from it and the underlying graph of this
oriented graph. A last difficulty is that since we are interested in
hereditary classes, we allow removing vertices from $G$.  But we have
to keep all vertices of $T$ to study $G$ because the so-called
\emph{shadow vertices}, the ones from $T$ that are not in $G$, capture
essential structural properties of $G$.  All this will be clearer in
the next section. For now, it explains why we need to be very careful
about the notation. 
For any classical notion that we do not define, refer to \cite{BondyMurty}.
	
\subsection*{Notation for trees}
	
A \emph{tree} is a graph $T$ such that for every pair of vertices
$u, v\in V(T)$, there exists a unique path from $u$ to $v$.  A
\emph{rooted tree} is a pair $(T, r)$ such that $T$ is a tree and
$r\in V(T)$.  The vertex $r$ is called the \emph{root} of $(T, r)$.
By abuse of notation, we often refer to the rooted tree $(T, r)$ as $T$, in
particular when $r$ is clear from the context.
	
In a rooted tree, each vertex $v$ except the root has a unique
\emph{parent} which is the neighbor of $v$ in the unique path from the
root to $v$. If $u$ is the parent of $v$, then $v$ is a \emph{child}
of $u$.  A \emph{leaf} of a rooted tree is a vertex that has no child.
Note that every tree has at least one leaf.

A \emph{branch} in a rooted tree is a path $ v_1 v_2 \dots v_k $ such
that for each $1\leq i < k$,~$ v_i$ is the parent of $v_{i+1}$.  This
branch \emph{starts} at $v_1$ and \emph{ends} at $v_k$.  A branch
that starts at the root and ends at a leaf is a \emph{principal}
branch.  Note that every rooted tree has at least one principal
branch.
	
If $T$ is a rooted tree, the \emph{descendants} of a vertex $v$ are
all the vertices that are on a branch starting at $v$. The
\emph{ancestors} of $v$ are the vertices on the unique path from $v$
to the root of~$T$.  Notice that a vertex is a descendant and an
ancestor of itself. We call \emph{a strict descendant} (resp.\ \emph{a strict ancestor}) of a vertex any descendant (resp.\ ancestor) other than the vertex itself.

We will no more use words such as
\emph{neighbors}, \emph{adjacent}, \emph{path}, etc for trees. Only
\emph{parent}, \emph{child}, \emph{branch}, \emph{descendant} and
\emph{ancestor} will be used.

\subsection*{Notation for graphs and oriented graphs}

Graphs in this article have no loops and no multiple edges. 
From now on, we use the term \emph{oriented graph} to refer to a graph whose edges
(called \emph{arcs}) are all oriented and no arc is oriented in
both directions, and we use the term \emph{non-oriented graph} or simply \emph{graph} only to refer to a graph with none of its edges oriented.

When $G$ is a graph or an oriented graph, we denote
by $V(G)$ its vertex set.  We denote by $E(G)$ the set of edges of a
graph $G$ and by $A(G)$ the set of arcs of an oriented graph.  When
$u$ and $v$ are vertices, we use the same notation $uv$ to denote an
edge and an arc.  Observe the arc $uv$ is different from the arc $vu$,
while the edge $uv$ is equal to the edge $vu$.
	
When $G$ is an oriented graph, its \emph{underlying graph} is the graph
$H$ such that $V(H) = V(G)$ and for all $u, v\in V(H)$, $uv\in E(H)$
if and only if $uv\in A(G)$ or $vu\in A(G)$.  We then also say that
$G$ is an \emph{orientation of $H$}.  When there is no risk of
confusion, we often use the same letter to denote an oriented graph
and its underlying graph.
	
In the context of oriented graphs, we use the words
\emph{in-neighbor}, \emph{out-neighbor}, \emph{in-degree},
\emph{out-degree}, \emph{sink} and \emph{source} with their classical meaning.
Terms from the non-oriented realm, such as \emph{degree},
\emph{neighbor}, \emph{isolated vertex} or \emph{connected component},
when applied to an oriented graph, implicitly apply to its underlying
graph.

\subsection*{Notation for paths and cycles}

For $k\geq 0$, a \emph{path} of \emph{length} $k$ is a graph $P$ with vertex-set
$\{p_0, \dots, p_{k}\}$ and edge-set $\{p_0p_1, \dots, p_{k-1}p_{k}\}$.
We denote it by $P_k$.  For $k\geq 3$, a \emph{cycle} of length $k$ is a graph $C$
with vertex-set $\{c_1, \dots, c_k\}$ and edge-set
$\{c_1c_2, \dots, c_{k-1}c_k, c_kc_1\}$.  We denote it by $C_k$.
Since we deal mostly with the ``induced subgraph'' containment
relation, when we say that \emph{$X$ is a path (resp.\ a cycle) in some graph $G$}, we mean that $X$ is an induced subgraph of $G$ that is isomorphic to a path~(resp.\ a cycle).

We define directed paths and cycles similarly (the arc-sets are
$\{p_1p_2, \dots, p_{k-1}p_k\}$ and
$\{c_1c_2, \dots, c_{k-1}c_k, c_kc_1\}$ respectively). 

A \emph{path} (resp.\ a \emph{cycle}) in an oriented graph is a path
(resp.\ a cycle) of its underlying graph. Possibly, it is not a
directed path (resp.\ cycle), it can have any orientation.  Notations
$P_k$ and $C_k$ will be used only in the context of non-oriented
graphs.  We use no specific notation for directed paths and cycles.  A
\emph{hole} in a graph $G$ is a cycle of length at least~4 that is an
induced subgraph of $G$.  A \emph{hole} in an oriented graph means a
hole of its underlying graph.

\section{Derived graphs}
\label{section:Derived-graphs}
	
In this section, we recall the definition and main properties of
\emph{derived graphs} that were introduced in \cite{PT:2021}.  A
\emph{Burling tree} is a 4-tuple $(T,r, \lastBorn, \choosePath)$ in
which:
	
\begin{enumerate}[label=(\roman*)]
\item $ T $ is a rooted tree and $ r $ is its root,
\item $\lastBorn$ is a function associating to each vertex $v$ of $T$
  which is not a leaf, one child of $v$ which is called the
  \emph{last-born} of $v$,
\item $\choosePath$ is a function defined on the vertices of $ T $. If
  $ v $ is a non-last-born vertex of $ T $ other than the root, then
  $ \choosePath $ associates to $ v $ the vertex-set of a (possibly
  empty) branch in $T$ starting at the last-born of $p(v)$. If $v$ is
  a last-born or the root of $T$, then we define
  $ \choosePath(v) = \varnothing $. We call $ \choosePath $ the
  \textit{choose function} of $ T $.
\end{enumerate}
	
By abuse of notation, we often use $T$ to denote the 4-tuple.

The oriented graph $G$ \emph{fully derived} from the Burling tree $T$
is the oriented graph whose vertex-set is $V(T)$ and such that
$uv \in A(G) $ if and only if $v$ is a vertex in $\choosePath(u)$.  A
non-oriented graph $G$ is \emph{fully derived} from $T$ if it is the
underlying graph of the oriented graph fully derived from $T$.
	
A graph (resp.\ oriented graph) $G$ is \emph{derived} from a Burling
tree $T$ if it is an induced subgraph of a graph (resp.\ oriented
graph) fully derived from $T$. The graph (resp.\ oriented graph) $G$
is called a \emph{derived graph} if there exists a Burling tree $T$
such that $G$ is derived from $T$.
	
Observe that if the root of $T$ is in $V(G)$, then it is an isolated
vertex of~$G$.  Observe that a last-born vertex of~$T$ that is in $G$
is a sink of $G$. This does not mean that every oriented derived graph
has a sink, because it could be that no last-born of $T$ is in $V(G)$.

In 1965, Burling \cite{Burling65} introduced a sequence $\{G_k\}_{k \geq 1}$ of graphs such that $\chi(G_k) = k$ and where each $ G_k $ is a triangle-free intersection graph of some boxes (i.e.\ axis parallel cuboids) in $ \RR^3$ (see \cite{Burling65} or \cite{PT:2021} for the definition). 
The \emph{class of Burling graphs} is the class of all induced subgraphs of the graphs in the sequence $ \{G_k\}$. 
It was proved in~\cite{PT:2021} that a graph is a Burling graph if and
only if it can be derived from a Burling tree (Theorem 4.9 of \cite{P2022}).  So in what follows, we use
``Burling graph'' or ``graph derived from a tree'' according to what
is most convenient.

Now, let us give some examples of derived graphs. We use the convention in all figures in this paper that the
tree~$T$ is represented with black edges while the arcs of $G$ are
represented in red. The last-born of a vertex of $T$ is its rightmost
child. Moreover, denoting a vertex of~$T$ in white means that this
vertex in not in~$G$.
	
\begin{figure}
  \begin{center}
    \includegraphics[width=7cm]{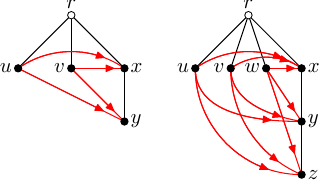}
    \caption{Complete bipartite graphs derived
      from trees.\label{f:square}}
  \end{center}
\end{figure}
	
In the first graph represented in Figure~\ref{f:square}, we have
$\choosePath(u) = \choosePath(v) = \{x, y\} $.  It shows that there exists an orientation of $C_4$ which is an oriented derived graph, so $C_4$ is
a derived graph.  The second graph shows that $K_{3, 3}$ is a derived
graph, and it is easy to generalize this construction to $K_{n, m}$
for all integers $n, m \geq 1$. In both graphs, the vertex $r$ of
$T$ is not a vertex of $G$. Figure~\ref{f:c6} is a presentation of
$ C_6 $ as a derived graph. Notice that in this presentation, the
vertex $v$ is not in~$G$.
	
\begin{figure}
  \begin{center}
    \includegraphics[width=4cm]{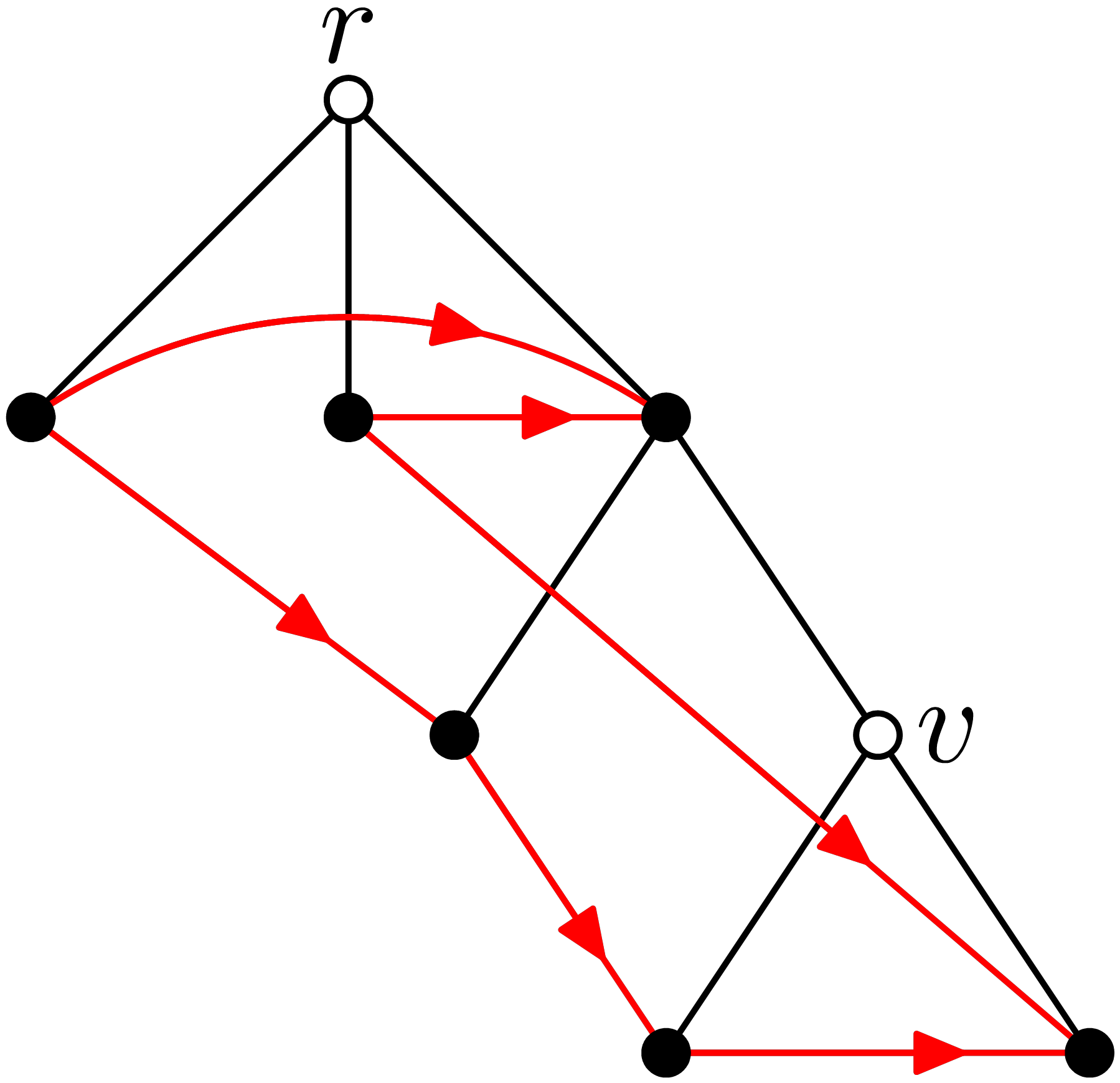}
    \caption{Cycle of length 6  derived from a tree.\label{f:c6}}
  \end{center}
\end{figure}

Notice that if a graph $G$ is derived from $T$, the branches of $T$,
restricted to the vertices of $G$, are stable sets of $G$. In
particular, no edge of $T$ is an edge of $G$.

\begin{lemma}
  \label{lem:simple}
  Suppose that $G$ is an oriented graph derived from a Burling tree
  $T$. If $uv \in A(G)$, then $p(u)$ is an ancestor of $p(v)$. 
\end{lemma}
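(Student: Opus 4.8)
The plan is to unwind the definitions directly. Since $G$ is derived from $T$, by definition it is an induced subgraph of the oriented graph $G'$ fully derived from $T$, so $A(G)\subseteq A(G')$. Hence $uv\in A(G)$ gives $uv\in A(G')$, which by the definition of the fully derived graph means exactly that $v\in\choosePath(u)$. Note this makes sense: $\choosePath$ is a function on all of $V(T)$ and $V(G)\subseteq V(T)$, so $\choosePath(u)$ is defined.

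Next I would exploit that $\choosePath(u)\neq\varnothing$. By condition (iii) in the definition of a Burling tree, this forces $u$ to be neither the root nor a last-born vertex of $T$, and it makes $\choosePath(u)$ the vertex-set of a branch $B$ of $T$ starting at the last-born $w:=\lastBorn(p(u))$ of $p(u)$. Since $v\in\choosePath(u)$, the vertex $v$ lies on $B$, so $v$ is a descendant of $w$. Moreover $w$ is a child of $p(u)$, hence a descendant of $p(u)$, and since the descendant relation is transitive (immediate from its branch-based definition), every descendant of $w$ — in particular $v$ — is a descendant of $p(u)$. In fact $v$ is a \emph{strict} descendant of $p(u)$: it is a descendant of $w$, and $w\neq p(u)$ because $w$ is a child of $p(u)$, so $v\neq p(u)$.

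It remains to pass from $v$ to $p(v)$, for which I would use the elementary tree fact that if $a$ is a strict descendant of $c$ then $p(a)$ is a descendant of $c$: indeed $c$ lies on the unique path from the root to $a$, which ends $\dots\,p(a)\,a$, so $c$ equals $p(a)$ or is an ancestor of $p(a)$, i.e.\ $p(a)$ is a descendant of $c$. Applying this with $a=v$ and $c=p(u)$ yields that $p(v)$ is a descendant of $p(u)$, equivalently $p(u)$ is an ancestor of $p(v)$, as claimed. There is no genuine obstacle here; the proof is just careful definition-tracing, and the only points to watch are the verification that $\choosePath(u)$ is defined and the degenerate case $v=w$ (which is fine, since then $p(v)=p(u)$ and every vertex is an ancestor of itself).
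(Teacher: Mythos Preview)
Your proof is correct and is precisely the definition-chase the paper has in mind; the paper's own proof is the single sentence ``Follows directly from the definition of derived graphs,'' and your argument is just a careful expansion of that.
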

		
\begin{proof}
  Follows directly from the definition of derived graphs. 
\end{proof}

\begin{lemma}
  \label{lem:DG-no-cycle}
  An oriented Burling graph has no directed cycles.
\end{lemma}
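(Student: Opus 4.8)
The plan is to argue by contradiction. Suppose $G$ is an oriented graph derived from a Burling tree $T$ and that $G$ contains a directed cycle $v_1v_2\cdots v_kv_1$ (so $k\ge 3$, the $v_i$ are distinct, and $v_1v_2,\dots,v_{k-1}v_k,v_kv_1$ are arcs; set $v_{k+1}=v_1$). The idea is simply to follow the parents $p(v_1),\dots,p(v_k)$ down the tree $T$ around the cycle and observe that this is impossible unless the cycle degenerates to a single vertex.

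First I would record a small strengthening of Lemma~\ref{lem:simple}. If $uv\in A(G)$, then by definition of a derived graph $v$ belongs to $\choosePath(u)$, which is the vertex set of a branch of $T$ starting at the last-born $\lastBorn(p(u))$ of $p(u)$; hence $v$ is a descendant of $\lastBorn(p(u))$. In particular $p(u)$ is an ancestor of $p(v)$ (this is Lemma~\ref{lem:simple}), and moreover $p(u)=p(v)$ forces $v=\lastBorn(p(u))$, since the only child of $p(u)$ that is a descendant of $\lastBorn(p(u))$ is $\lastBorn(p(u))$ itself. Also note every $v_i$ has an out-neighbour (namely $v_{i+1}$), so $\choosePath(v_i)\neq\varnothing$ and $v_i$ is neither a last-born nor the root, which is why $\choosePath(v_i)$ is indeed a branch starting at $\lastBorn(p(v_i))$.

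Next, applying Lemma~\ref{lem:simple} to each arc of the cycle yields that $p(v_1)$ is an ancestor of $p(v_2)$, which is an ancestor of $p(v_3)$, \dots, which is an ancestor of $p(v_k)$, which is an ancestor of $p(v_1)$. Since the ancestor relation is a partial order on $V(T)$, all of $p(v_1),\dots,p(v_k)$ coincide; call this common vertex $q$, so that every $v_i$ is a child of $q$. Now apply the refinement above to each arc $v_iv_{i+1}$ with $p(v_i)=p(v_{i+1})=q$: it gives $v_{i+1}=\lastBorn(q)$ for every $i$. Hence $v_1=v_2=\cdots=v_k=\lastBorn(q)$, contradicting the fact that a cycle has at least three distinct vertices.

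I expect the whole argument to be short; the only point that needs a little care is the equality case of Lemma~\ref{lem:simple}, namely that the parents $p(v_i)$ need only be \emph{weakly} nested along the cycle rather than strictly, which is exactly why one wants the ``$v$ is a descendant of $\lastBorn(p(u))$'' refinement instead of Lemma~\ref{lem:simple} alone. (Directed $2$-cycles are excluded automatically: an oriented graph cannot contain both $uv$ and $vu$, so assuming $k\ge 3$ is harmless.)
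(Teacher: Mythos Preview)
Your proof is correct and follows essentially the same route as the paper: apply Lemma~\ref{lem:simple} around the cycle to force $p(v_1)=\cdots=p(v_k)$, then use that $v_{i+1}\in\choosePath(v_i)$ lies on a branch starting at $\lastBorn(p(v_i))$ to reach a contradiction. The only cosmetic difference is the endgame: the paper observes directly that two consecutive non-last-born siblings cannot be joined by an arc, whereas you push one step further to $v_1=\cdots=v_k=\lastBorn(q)$; since you already noted that no $v_i$ is a last-born, you could in fact have stopped a line earlier.
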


\begin{proof}
  Let $ G $ be a Burling graphs. It is therefore a derived graph (by Theorem 4.9 of \cite{PT:2021}). Assume that $ G $ is derived from a Burling tree $ T $. If a directed cycle $v_1\dots v_kv_1$ exists in $G$, then by Lemma~\ref{lem:simple}, for
  every $1\leq i \leq k$, $p(v_i)$ is an ancestor of $p(v_{i+1})$
  (with addition modulo $k$). This is possible only if
  $p(v_1) = \cdots = p(v_k)$.  So, all $v_i$'s have the same parent
  $p$, so we may assume up to symmetry that $v_1$ and $v_2$ are not
  last-borns of $p$.  This is a contradiction since $v_1v_2\in A(G)$.
\end{proof}

\begin{lemma}
  \label{l:source}
  Every oriented Burling graph contains a source.
\end{lemma}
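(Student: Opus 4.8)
The plan is to locate the source by examining how $V(G)$ sits inside the Burling tree $T$ from which $G$ is derived, and to exploit Lemma~\ref{lem:simple}. First I would dispose of the degenerate cases: we may assume $V(G)\neq\varnothing$, and if the root $r$ of $T$ lies in $V(G)$ then $r$ is an isolated vertex of $G$, hence a source, so we may also assume $r\notin V(G)$; then every vertex of $G$ has a parent in $T$. Now let $q$ be a vertex of $T$ for which the branch of $T$ from $r$ to $q$ has smallest possible length among all vertices of $T$ that are the parent of some vertex of $G$; such a $q$ exists because these lengths are non-negative integers.

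The key step is the observation that \emph{every in-neighbour in $G$ of a child of $q$ is again a child of $q$}. Indeed, if $uv\in A(G)$ with $p(v)=q$, then by Lemma~\ref{lem:simple} the vertex $p(u)$ is an ancestor of $q$; since $u\in V(G)$, minimality of $q$ forces $p(u)$ to be ``at least as deep'' as $q$; but an ancestor of $q$ that is not strictly closer to the root must equal $q$, so $p(u)=q$.

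It then remains to split into two cases according to the last-born function at $q$. If some child $v$ of $q$ lying in $G$ is \emph{not} the last-born of $q$, then any in-neighbour $u$ of $v$ satisfies $p(u)=q$ by the observation, so $v$ is a vertex of $\choosePath(u)$, which is the vertex set of a branch of $T$ starting at the last-born of $p(u)=q$; however, the only vertex of such a branch whose parent is $q$ is its first vertex, the last-born of $q$, so $v$ would be the last-born of $q$ — a contradiction. Hence $v$ has no in-neighbour and is a source. Otherwise every child of $q$ in $G$ equals the unique last-born of $q$, so there is exactly one child $v$ of $q$ in $G$; by the observation $v$ has no in-neighbour other than $v$ itself, hence none at all, and is a source. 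Either way $G$ has a source.

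The main obstacle — the reason a bare minimality argument does not finish immediately — is precisely this last-born versus non-last-born asymmetry: a last-born vertex may perfectly well have in-neighbours (its non-last-born siblings that happen to lie in $G$), so minimality of $q$ alone does not guarantee that a chosen child of $q$ is a source, and one must feed in the structure of $\choosePath$ to handle it. (If one only cares about finite graphs, there is also a one-line alternative: by Lemma~\ref{lem:DG-no-cycle} an oriented Burling graph is a finite acyclic digraph and hence has a source; the argument sketched above has the merit of not relying on finiteness.)
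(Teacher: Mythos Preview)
Your argument is correct, but the paper's own proof is the one-liner you mention at the very end: it simply says ``Follows from Lemma~\ref{lem:DG-no-cycle}'', i.e., a (finite) oriented graph with no directed cycle has a source. So you have rediscovered the intended proof and then layered a second, structural proof on top of it.

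The structural route you take---choosing $q$ minimal among parents of vertices of $G$, using Lemma~\ref{lem:simple} to force $p(u)=q$ for any in-neighbour $u$ of a child of $q$, and then splitting on whether some child of $q$ in $G$ is a non-last-born---is sound and pleasantly self-contained: it extracts the source directly from the tree without passing through acyclicity. The trade-off is length: the paper spends one sentence, you spend a page. Your remark that this avoids any finiteness hypothesis is accurate (a finite acyclic digraph has a source, an infinite one need not; your depth-minimisation only needs that $\mathbb{N}$ is well-ordered), but since Burling trees and the graphs derived from them are tacitly finite throughout the paper, this extra generality buys nothing here. If you keep your write-up, the short proof should be the main text and the structural argument a remark, not the other way around.
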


\begin{proof}
  Follows from Lemma~\ref{lem:DG-no-cycle}.
\end{proof}

\begin{lemma}
  \label{lem:no-triangle}
  A Burling graph contains no triangle.
\end{lemma}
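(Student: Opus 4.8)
The plan is to pass to an orientation and argue with the tree. Since (as recalled above) a Burling graph is the underlying graph of a graph derived from a Burling tree, and a triangle survives the passage to an orientation, it suffices to show that an oriented graph $G$ derived from a Burling tree $T$ has no three pairwise adjacent vertices. So suppose for contradiction that $x$, $y$, $z$ are pairwise adjacent in $G$. The three arcs among them form a tournament on $\{x,y,z\}$, which by Lemma~\ref{lem:DG-no-cycle} is not a directed triangle; hence it is transitive, and relabelling if necessary we may assume $xy$, $xz$ and $yz$ are arcs of $G$ (so $x$ is the source and $z$ the sink of the transitive triangle).

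First I would use the two arcs leaving $x$. From $xy, xz \in A(G)$ we get $y, z \in \choosePath(x)$, and since $\choosePath(x)$ is by definition the vertex-set of a single branch of $T$, both $y$ and $z$ lie on that one branch; as $y \neq z$, one of them is a \emph{strict} ancestor of the other. Next I would use the arc $yz$. From $z \in \choosePath(y)$ we get $\choosePath(y) \neq \varnothing$, so $y$ is a non-last-born vertex different from the root, and $\choosePath(y)$ is a branch starting at $\lastBorn(p(y))$; in particular $z$ is a descendant of $\lastBorn(p(y))$. Since $y$ is not the last-born of $p(y)$, the vertices $y$ and $\lastBorn(p(y))$ are two distinct children of $p(y)$, so no descendant of $y$ is a descendant of $\lastBorn(p(y))$, and every descendant of $\lastBorn(p(y))$ is a strict descendant of $p(y)$.

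To finish, I would split on which of $y$, $z$ is the strict ancestor of the other. If $z$ is a strict descendant of $y$, then $z$ is a descendant of both $y$ and $\lastBorn(p(y))$, impossible. If $z$ is a strict ancestor of $y$, then $z$ is an ancestor of $p(y)$, while at the same time $z$ is a strict descendant of $p(y)$ (being a descendant of the child $\lastBorn(p(y))$ of $p(y)$); this contradicts the fact that in a rooted tree a vertex cannot be simultaneously an ancestor and a strict descendant of another vertex. In either case we reach a contradiction, so $G$ has no triangle, and therefore no Burling graph does.

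The only slightly delicate point is the middle paragraph: from the single arc $yz$ one must read off that $y$ fails to be the last-born of $p(y)$ (so that $y$ and $\lastBorn(p(y))$ sit in disjoint subtrees) and that $z$ is pushed strictly below $p(y)$. Once this is combined with the observation — obtained by packing $y$ and $z$ both into the branch $\choosePath(x)$ — that $y$ and $z$ are comparable in $T$, the contradiction is immediate, and the remainder is just bookkeeping with the (reflexive) ancestor/descendant conventions. I do not anticipate any genuine obstacle beyond this.
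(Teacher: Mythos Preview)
Your proof is correct and follows essentially the same approach as the paper's: both pass to a transitive orientation of the triangle via Lemma~\ref{lem:DG-no-cycle}, then observe that the two out-neighbours of the source lie in a single branch $\choosePath(\cdot)$ of $T$ and hence cannot be adjacent. The only difference is cosmetic: the paper invokes in one line the already-stated fact that branches of $T$ restricted to $V(G)$ are stable sets, whereas you unfold this fact explicitly via the case split on which of $y,z$ is the ancestor of the other.
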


\begin{proof}
  Suppose, for the sake of contradiction, that this is not true. Let $ G $ be an oriented graph derived from a Burling tree $ T $ whose underlying graph contains a triangle.  
  Up to symmetry, a triangle has only two possible orientations. By Lemma~\ref{lem:DG-no-cycle}, the triangle in $ G $ is not a directed cycle. Therefore, it is transitively oriented, i.e. it has an arc
  $uv$ and two arcs $uw$ and $wv$. So, $ v $ and $ w $ are both
  out-neighbors of $ u $, and thus are included in the a common branch of $ T $. Therefore, there is no arc between $ v $ and $ w $, a contradiction.
\end{proof}
	
The next lemma shows that all oriented Burling graphs can be derived
from Burling trees with specific properties. This will reduce
complication in some proofs.  Let $G$ be an oriented graph derived
from a Burling tree $T$.  An arc $uv$ of $G$ is a \emph{top-arc with respect to $T$} if $v$ is the out-neighbor of $u$ that is closest
(in $T$) to the root of $T$.  An arc $uv$ of $G$ is a \emph{bottom-arc with respect to $T$} if $v$ is the out-neighbor of $u$ that is
furthest (in $T$) from the root of $T$. Note that it is possible that
a top (resp.\ bottom) arc with respect to $T$ is no longer a top-arc
(resp.\ bottom) when $G$ is derived from another tree $T'$.  This is
why we add ``with respect to $T$'' but we omit it when $T$ is clear
from the context. Informally, top-arcs and bottom-arcs are arcs that can be subdivided while preserving being Burling, as we will show in Lemma~\ref{l:subdivAll}.

\begin{figure}
	\begin{center}
		\includegraphics[width=5.5cm]{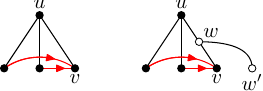}
		\caption{Transforming $v$ into a non-last-born.\label{f:subdiv-6}}
	\end{center}
\end{figure}

\begin{figure}
	\begin{center}
		\includegraphics[width=8cm]{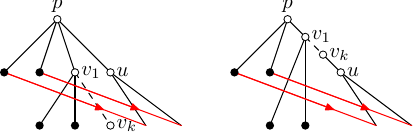}
		\caption{Transforming $v$ into a last-born.\label{f:subdiv-3}}
	\end{center}
\end{figure}

\begin{lemma}
	\label{l:chooseT}
	If $G$ is an oriented Burling graph derived from a Burling tree
	$(T, r, \lastBorn, \choosePath)$, then $G$ can be derived from a
	Burling tree $(T', r', \lastBorn', \choosePath')$ such that:
	$v\in V(G)$ if and only if $v$ is neither the root nor a last-born
	of $T'$. Moreover, for all arcs $uv$ of $G$, $uv$ is a top (resp.\
	bottom) arc of $G$ with respect to $T$ if and only if it is a top
	(resp.\ bottom) arc of $G$ with respect to~$T'$.
\end{lemma}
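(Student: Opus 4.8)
The plan is to transform $T$ into $T'$ by a finite sequence of local surgeries, each of which keeps $G$ derived as an induced subgraph and keeps the set of top arcs and the set of bottom arcs of $G$ unchanged, until the vertices outside $V(G)$ are exactly the root and the last-borns. There is one preliminary fix and two basic moves, the latter being the ones drawn in Figures~\ref{f:subdiv-6} and~\ref{f:subdiv-3}. Preliminary fix for the root: if $r\in V(G)$, then $r$ is isolated in $G$, so add a new root $r^*$ whose two children are $r$ and a new vertex $s\notin V(G)$, declare $s$ to be the last-born of $r^*$, put $\choosePath'(r)=\varnothing$, and leave every other choose-branch untouched (none of them met the former root, since $r$ had no proper ancestor). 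All depths merely increase by one, so the order of vertices along each choose-branch is unchanged; hence $G$, its top arcs and its bottom arcs are unchanged, and now the root is not in $V(G)$. So from now on assume $r\notin V(G)$.

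First move (Figure~\ref{f:subdiv-6}), applied while some last-born $v$ lies in $V(G)$, with $p=p(v)$: insert a fresh vertex $w\notin V(G)$ between $p$ and $v$, give $w$ a second child $w'\notin V(G)$, set $\lastBorn'(p)=w$ and $\lastBorn'(w)=w'$, put $\choosePath'(v)=\choosePath'(w)=\choosePath'(w')=\varnothing$, and insert $w$ into every choose-branch that used the tree edge $pv$. One checks this is again a Burling tree, that the set of arcs of $G$ inside $V(G)$ is unchanged (only the shadow vertex $w$ is added to some branches, and $\choosePath'(v)=\varnothing=\choosePath(v)$ since $v$ was a last-born), and that the sequence of $V(G)$-vertices along each choose-branch — hence the set of top arcs and the set of bottom arcs — is unchanged. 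Since this strictly decreases the number of last-borns lying in $V(G)$ and creates no new such vertex, after finitely many applications no last-born is in $V(G)$.

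Second move (Figure~\ref{f:subdiv-3}), applied while some non-root non-last-born $v$ is outside $V(G)$, with $p=p(v)$ and $t=\lastBorn(p)$ (so $t\notin V(G)$, by the previous paragraph): set $\lastBorn'(p)=v$, reparent $t$ together with its whole subtree so that $t$ becomes a child of $v$, keep $\lastBorn'(v)=\lastBorn(v)$ if $v$ had a child and otherwise set $\lastBorn'(v)=t$, put $\choosePath'(v)=\choosePath'(t)=\varnothing$, and insert $v$ immediately before $t$ in every choose-branch that met $t$. As before this is a Burling tree, it leaves the arcs of $G$ inside $V(G)$ and the order of the $V(G)$-vertices along each choose-branch unchanged, so $G$ and its top/bottom arcs are preserved; moreover the only vertices whose last-born status may change are $v$ and $t$, and both stay outside $V(G)$, so no last-born of $V(G)$ is created and the invariant from the previous paragraph is kept. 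The delicate point is termination: if $v$ had a child, this move turns $t$ into a non-root non-last-born vertex outside $V(G)$ — a vertex of the same forbidden kind, but one level deeper. Repeating the move on $t$, then on the vertex it in turn creates, and so on, produces a cascade whose defective vertices run down the two last-born spines starting at $\lastBorn(p)$ and at $\lastBorn(v)$; since both spines are finite, the cascade reaches a leaf and stops there without creating a new forbidden vertex, and throughout it only the cascade vertices change status, each of them ending as a shadow last-born. Hence one whole cascade strictly decreases the number of non-root non-last-born vertices outside $V(G)$ — a number not increased anywhere else — so finitely many cascades exhaust it.

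After all of this, a vertex of $T'$ fails to lie in $V(G)$ precisely when it is the root or a last-born, which is the first assertion; and composing the preservations above shows that for every arc $uv$ of $G$, $uv$ is a top (resp.\ bottom) arc with respect to $T$ if and only if it is one with respect to $T'$. The chief obstacle is the third paragraph: making the ``transform into a last-born'' move well defined and proving that the cascade it triggers terminates instead of pushing the defect downward forever.
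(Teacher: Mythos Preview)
Your proof is correct and follows the paper's approach: the same preliminary root-fix and the two local surgeries of Figures~\ref{f:subdiv-6} and~\ref{f:subdiv-3}. Your second move differs in a detail from the paper's (you keep $\lastBorn'(v)=\lastBorn(v)$ rather than setting $\lastBorn'(v)=t$), and you supply the explicit cascade/termination argument that the paper leaves implicit; these are gains in rigor, not a change of strategy.
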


\begin{proof}
	We apply several transformations to $(T, r, \lastBorn, \choosePath)$
	in order to obtain a Burling tree
	$(T', r', \lastBorn', \choosePath')$ from which $G$ can be derived
	and that satisfies the conclusion.  We start with
	$(T', r', \lastBorn', \choosePath') = (T, r, \lastBorn,
	\choosePath)$.
	
	If $r\in V(G)$, then add a vertex $r'$ adjacent to $r$, and set
	$\lastBorn(r')=r$.
	
	If some vertex $v$ of $G$ is a last-born (possibly $r$ after the
	first step), then let $u$ be its parent. Delete the edge $uv$ from
	$T'$ and add a vertex $w$ adjacent to $u$ and $v$. Set
	$\lastBorn'(u) = w$.  Add a vertex $w'$ adjacent to $w$, and set
	$w'= \lastBorn(w)$.  For every vertex $x$ such that
	$v\in \choosePath(x)$, replace
	$\choosePath'(x)$ by $\choosePath'(x) \cup \{w\}$.  See
	Figure~\ref{f:subdiv-6}.  Note that all the new vertices added to
	$T'$ at this step are last-borns and are not in $G$. 
	
	
	Finally, suppose that some non-root vertex $ v $ of the tree is not a last-born of the tree and is not in $ V(G)$ neither. In that case, let $ p $ be the parent of $ v$ (which exists, because $v$ is not the root) and let $ u $ be the last-born of $ p $. So, $ u \neq v $. Let $ v_1 = v $ and for $ i \geq 1$, if $ v_i $ is not a leaf, let $ v_{i+1}  = \lastBorn(v_i)$. Let $ v_k $ be a leaf. Define a new Burling tree $(T', r, \lastBorn', \choosePath') $ as follows: remove the edge $pu$ and add the edge $v_ku$. This defines the tree $ T'$. Now, define $ \lastBorn'(p)=v_1$, $\lastBorn'(v_k) = u$, and for any other vertex $ w $, define $\lastBorn'(w) = \lastBorn(w)$. Moreover, define $ \choosePath'(v) = \varnothing$, for every vertex $ w \in V(T') \sm \{v\} $ with $ u \in \choosePath(w) $, define $ \choosePath'(w)= \choosePath(w) \cup \{v_1, \dots, v_k\}$, and for any other vertex $ w'$, define $ \choosePath'(w') = \choosePath(w)$. In the new Burling tree, $ v $ is a last-born. Notice that because of the previous transformations, none of the vertices $v_1, \dots, v_k $ are in $V(G)$, and therefore the adjacencies in $ G $ do not change. Moreover, this transformation does not cancel the effect of the previous ones. 
	
	Applying these transformations in the presented order leads to a Burling tree that satisfies the
	conclusions of the lemma and for every vertex $ x $, we have
	$\choosePath'(x)\cap V(G) = \choosePath(x)\cap V(G)$ for $x\in V(G)$, so that $G$ can be derived from
	$T'$. The orders in which the vertices of $G$ appear along branches
	of $T$ is the same as along the branches of $T'$, so top and bottom
	vertices are the same for the two representations.
\end{proof}

\subsection*{Subdivisions}

We here study how to obtain new oriented Burling graphs by subdividing
arcs.  Before starting the proofs, it is worth observing that in a
Burling tree $(T, r, \lastBorn, \choosePath)$, when $v$ is the
last-born of some vertex $u$, one may obtain another Burling tree
$(T', r, \lastBorn', \choosePath')$ by setting
$(T', r, \lastBorn', \choosePath') = (T, r, \lastBorn, \choosePath)$
and then applying the following transformations to $T'$.  First, delete
the edge $uv$ from $T'$ and add a new vertex $w$ adjacent to $u$ and
$v$.  Then set $\lastBorn'(u) = w$ and $\lastBorn'(w) = v$.  To define
$\choosePath'$, add $w$ to all sets $\choosePath(x)$ that contain $v$.
A fact that we do not state formally but is easy to check and is implicit in some of the proofs below is that this new Burling tree $T'$ is equivalent to
$T$ in the sense that every graph that can be derived from $T$ can be
derived from $T'$.  This is simply because
$V(G) \subseteq V(T) \subseteq V(T')$ and because for all $x\in V(G)$,
$\choosePath(x)\cap V(G) = \choosePath'(x)\cap V(G)$. 

\emph{Subdividing} an arc $uv$ into $uwv$ in an oriented graph means
removing the arc $uv$ and adding instead a directed path $uwv$ where $w$
is a new vertex.
	
\begin{figure}
  \begin{center}
    \includegraphics[width=6cm]{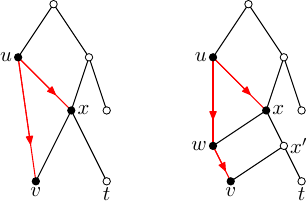}
    \caption{Subdividing a bottom-arc.\label{f:subdiv-9}}
  \end{center}
\end{figure}

\begin{lemma}
  \label{l:bottomSubdiv}
  Let $G$ be an oriented graph derived from a Burling tree $T$ and
  $uv$ be a bottom-arc of $G$.  The graph $G'$ obtained from $G$ by
  subdividing $uv$ into $uwv$ can be derived from a Burling tree $T'$
  in such a way that:
  \begin{itemize}
  	\item $uw$ is a bottom-arc of $G'$,
  	\item $wv$ is both a bottom-arc and a top-arc of $ G'$,
  	\item every top-arc of
  	$G$ with respect to $T$ (except $uv$) is a top-arc of $G'$ with
  	respect to $T'$,
  	\item every bottom-arc of $G$ with respect to $T$
  	(except $uv$) is a bottom-arc of $G'$ with respect to $T'$.
  \end{itemize}
\end{lemma}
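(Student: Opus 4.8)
The plan is to build $T'$ from $T$ by two local surgeries near $v$. Using Lemma~\ref{l:chooseT}, I would first assume that $V(G)$ is exactly the set of vertices of $T$ that are neither the root nor a last-born; in particular $v$ is not a last-born of $T$. Write $\choosePath(u)$ as the vertex set of a branch $d_1d_2\dots d_m$, so that $d_1 = \lastBorn(p(u))$. Since $uv$ is a bottom arc, $v = d_s$ where $s$ is the largest index with $d_s \in V(G)$, and $s \geq 2$ because $d_1$, being a last-born, is not in $V(G)$. Put $q = p(v) = d_{s-1}$ and let $a = \lastBorn(q)$; then $a$ is a shadow vertex of $T$ distinct from $v$.

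Now I would form $T'$ as follows: add two new vertices $z$ and $w$; make $z$ a new child of $q$ and re-attach both $v$ and $a$ as the children of $z$; set $\lastBorn'(q) = z$ and $\lastBorn'(z) = a$; make $w$ a further child of $q$ (so $w$ is not the last-born of $q$), and leave the rest of the tree and of $\lastBorn$ unchanged. For the choose function, set $\choosePath'(u) = \{d_1, \dots, d_{s-1}, w\}$, $\choosePath'(w) = \{z, v\}$ and $\choosePath'(z) = \varnothing$; for a child $x$ of $q$ in $T$ other than $v$ and $a$, set $\choosePath'(x) = \{z\} \cup \choosePath(x)$, which is again a branch starting at the new last-born $z$ of $q$; and for every other vertex $x$ of $T$, keep $\choosePath'(x) = \choosePath(x)$ except that $z$ is inserted between $q$ and its successor whenever the branch $\choosePath(x)$ runs from $q$ to $v$ or from $q$ to $a$ --- these insertions being harmless exactly as in the padding observation recalled just before the statement. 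A routine check shows that $(T', r, \lastBorn', \choosePath')$ is a Burling tree.

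To see that $G'$ is derived from $T'$, note that $z$ and $a$ are shadow vertices of $T'$, so $\choosePath'(x) \cap V(G) = \choosePath(x) \cap V(G)$ for every vertex $x$ of $T$ with $x \neq u$, while $\choosePath'(u)$ meets $V(G) \cup \{w\}$ in $((\choosePath(u) \cap V(G)) \sm \{v\}) \cup \{w\}$; moreover $w$ belongs to no set $\choosePath'(x)$ with $x \neq u$, and $\choosePath'(w)$ meets $V(G) \cup \{w\}$ in $\{v\}$. Hence the oriented graph fully derived from $T'$, restricted to the vertex set $V(G) \cup \{w\}$, is exactly $G$ with the arc $uv$ deleted and a directed path $uwv$ added, that is, $G'$; so $G'$ is an induced subgraph of a graph fully derived from $T'$.

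It then remains to check the claims on top and bottom arcs. Since $v$ is the only out-neighbor of $w$ in $G'$, the arc $wv$ is automatically both a top arc and a bottom arc of $G'$ with respect to $T'$. Since $w$ is a child of $q = d_{s-1}$, it lies strictly below each of $d_1, \dots, d_{s-1}$ in $T'$, so it is the out-neighbor of $u$ furthest from the root, and $uw$ is a bottom arc. For every other vertex $x$ the out-neighborhood of $x$ is unchanged (only that of $u$ is altered, and only by exchanging its deepest out-neighbor $v$ for the still deeper vertex $w$), and passing from $T$ to $T'$ merely pushes the subtrees rooted at $v$ and $a$ one level deeper and inserts the shadow vertex $z$ on a few branches, which does not reorder the vertices occurring along any branch of $T$; hence the vertex realizing the top (resp.\ bottom) arc of a vertex of $G$ --- with the single exception of the bottom arc $uv$ of $u$ --- still realizes its top (resp.\ bottom) arc in $G'$ with respect to $T'$. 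The part I expect to take the most care is the bookkeeping for $\choosePath'$: one must check that moving $a$ under $z$ keeps realizable every branch that previously started at $a$, and that inserting $z$ neither creates nor destroys an arc among the vertices of $V(G) \cup \{w\}$ --- in particular that $w$ acquires no in-neighbor other than $u$ and no out-neighbor other than $v$.
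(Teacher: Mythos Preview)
Your construction is correct and follows essentially the same approach as the paper's: both arguments invoke Lemma~\ref{l:chooseT} to make $v$ a non-last-born, then insert a new last-born $z$ (the paper's $x'$) under $q=p(v)$ (the paper's $x$), attach the old last-born $a$ (the paper's $t$) beneath $z$, and add $w$ as a fresh non-last-born child of $q$, redirecting $\choosePath(u)$ to terminate at $w$ and setting $\choosePath(w)$ to the two-vertex branch through $z$ to $v$. The one visible difference is that you explicitly re-attach $v$ as a child of $z$ (so that $\{z,v\}$ is genuinely a branch starting at the new last-born), and you explicitly truncate $\choosePath'(u)$ at $d_{s-1}$ before appending $w$; the paper's text writes $\choosePath'(w)=\{x',v\}$ and $\choosePath'(u)=\{w\}\cup\choosePath(u)\sm\{v\}$ and relies on Figure~\ref{f:subdiv-9} for the picture, so your write-up is if anything more careful on these bookkeeping points.
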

	
\begin{proof}
  By Lemma~\ref{l:chooseT}, we may assume that $v$ is not a
  last-born. Let $x$ be the parent of $v$, and let $t$ be the last-born of
  $x$.  See Figure~\ref{f:subdiv-9}.

  Build from $T$ a Burling tree $(T', r', \lastBorn', \choosePath')$
  by removing the edge $xt$ from~$T$.  Then add to $T'$ a path $xx't$,
  and set $\lastBorn'(x)=x'$, and $\lastBorn'(x')=t$.  Add to $T'$ a
  new vertex $w$ adjacent to $x$.  Set
  $\choosePath'(u)=\{w\} \cup \choosePath(u) \sm \{v\}$.  Set
  $\choosePath'(w) = \{x', v\}$.  Replace $\choosePath'(z)$ by
  $\choosePath'(z) \cup \{x'\}$ for all $z\neq v$ such that
  $t\in \choosePath(z)$ or $v\in \choosePath(z)$.

  We see that the oriented graph $G'$ obtained from $G$ by subdividing
  arc $uv$ into $uwv$ can be derived from $T'$. 
\end{proof}

\emph{Top-subdividing} an arc $uv$ into $wu$ and $wv$ means removing
$uv$ and add instead two arcs $wv$ and $wu$ where $w$ is a new vertex.

\begin{figure}
  \begin{center}
    \includegraphics[width=9cm]{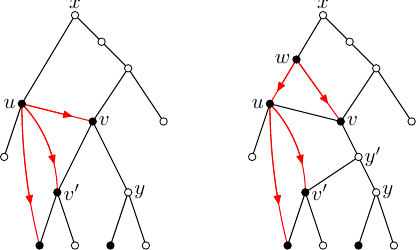}\\\rule{0em}{3ex}\\
    \includegraphics[width=9cm]{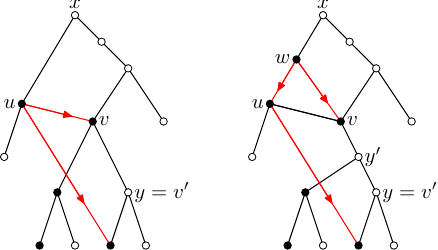}
    \caption{Top-subdividing a top-arc.\label{f:subdiv-12}}
  \end{center}
\end{figure}

\begin{lemma}
  \label{l:topSubdiv}
  Let $G$ be an oriented graph derived from a Burling tree $T$ and
  $uv$ be a top-arc of $G$ such that $u$ is a source of $G$.  The
  graph $G'$ obtained from $G$ by top-subdividing $uv$ can be derived
  from a Burling tree $T'$ in such a way that:
  \begin{itemize}
  	\item $wv$ is a top-arc, 
  	\item $wu$ is a bottom-arc,
  	\item every top-arc of $G$ with respect to $T$ (except
  	$uv$) is a top-arc of $G'$ with respect to $T'$,
  	\item every bottom-arc
  	of $G$ with respect to $T$ (except $uv$) is a bottom-arc of~$G'$
  	with respect to $T'$.
  \end{itemize} 
\end{lemma}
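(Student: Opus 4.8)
The plan is to build $T'$ from $T$ by a surgery concentrated around $u$ and $v$; the hypothesis that $u$ is a source is what lets us freely relocate the subtree of $T$ rooted at $u$. By Lemma~\ref{l:chooseT} we may assume $v\in V(G)$ if and only if $v$ is neither the root nor a last-born of $T$; so $u$ and $v$ are not last-borns. Put $x=p(u)$ and $b=\lastBorn(x)$. As $uv\in A(G)$, the vertex $v$ lies on the branch whose vertex set is $\choosePath(u)$; write that branch as $z_0=b,z_1,\dots,z_m$, with $v=z_i$. Since $b$ is a last-born, $v\neq b$, so $i\geq1$; and since $uv$ is a top arc, none of $z_0,\dots,z_{i-1}$ lies in $V(G)$, so by Lemma~\ref{l:chooseT} each of them is a last-born of its parent — thus $z_0\to\cdots\to z_{i-1}$ is a chain of last-born edges below $x$, and $v$ is a non-last-born child of $z_{i-1}$.

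I would then construct $T'$ as follows. Let $t=\lastBorn(v)$ (if $v$ is a leaf of $T$, first give it a new shadow child and declare it $\lastBorn(v)$; this changes neither $G$ nor any hypothesis). Add a new shadow vertex $\beta$, make it the last-born of $v$, and set $\lastBorn'(\beta)=t$, so that $t$ hangs below $\beta$. If $\choosePath(u)$ runs below $v$, let $\hat z$ be the child of $v$ on that branch, i.e.\ $\hat z=z_{i+1}$; if $\hat z\neq t$ (equivalently $\hat z\in V(G)$), detach the subtree of $T$ rooted at $\hat z$ and re-attach it as a non-last-born child of $\beta$. Since $u$ is a source, $u$ lies in no set $\choosePath(y)$, so we may detach the subtree of $T$ rooted at $u$ and re-attach it as a non-last-born child of $v$. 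Add one more new vertex $w$ as a non-last-born child of $x$. For the choose function set $\choosePath'(u)=\{\beta\}\cup\bigl(\choosePath(u)\setminus\{z_0,\dots,z_i\}\bigr)$ (a branch of $T'$ from $\beta$ running on down the tail of $\choosePath(u)$ through $\hat z$) and $\choosePath'(w)=\{z_0,\dots,z_{i-1},v,u\}$ (the branch of $T'$ from $b=\lastBorn'(x)$ through $v$ to $u$); for every other $y$ take $\choosePath'(y)$ to be $\choosePath(y)$ with the shadow vertex $\beta$ inserted at the one place forced by its position in $T'$ (right after $v$, or at the very top when $\choosePath(y)$ was a branch from the old $\lastBorn(v)$) and nothing else changed.

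Verifying that $T'$ is a Burling tree and that $G'$ is derived from it is then routine. The relevant facts: for $y\neq u,w$ the set $\choosePath'(y)$ differs from $\choosePath(y)$ only by the shadow vertex $\beta$, hence $\choosePath'(y)\cap V(G')=\choosePath(y)\cap V(G)$; we have $v\notin\choosePath'(u)$ while $\hat z$ and the rest of the tail of $\choosePath(u)$ survive, so in $G'$ the vertex $u$ loses exactly the arc $uv$ and keeps its other out-neighbours; and $\choosePath'(w)\cap V(G')=\{v,u\}$ with no in-arc at $w$, so $w$ contributes exactly the arcs $wv$ and $wu$. Since $v$ is a strict ancestor of $u$ in $T'$, $v$ is the out-neighbour of $w$ closest to the root and $u$ the farthest, so $wv$ is a top arc and $wu$ is a bottom arc; and since the sole change to any other $\choosePath$-set is the insertion of the single shadow vertex $\beta$, the order of $G$-vertices along every branch is unchanged, so every top (resp.\ bottom) arc of $G$ other than $uv$ remains a top (resp.\ bottom) arc of $G'$.

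The step I expect to be the main obstacle is defining $\choosePath'(u)$: the child $\hat z$ of $v$ that heads the tail of $\choosePath(u)$ may itself be a vertex of $G$, so it cannot be turned into a last-born, yet $u$'s new choose-branch must begin at it. Inserting $\beta$ as the new last-born of $v$, with $t$ and — when it is a $G$-vertex — $\hat z$ both pushed beneath $\beta$, is precisely what reconciles these constraints; the delicate point in the verification is that re-parenting $\hat z$ under $\beta$ leaves $\choosePath(\hat z)$ and every arc incident to $v$ untouched. Relocating the subtree rooted at $u$ is legitimate only because $u$ is a source, and this is where that hypothesis enters.
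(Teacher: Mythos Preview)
Your construction is correct and is essentially the same surgery as the paper's: relocate $u$ from child of $x$ to child of $v$, insert a new shadow last-born under $v$ (your $\beta$ is the paper's $y'$), push the old last-born $t$ (the paper's $y$) and the continuation vertex $\hat z$ (the paper's $v'$) beneath it, add $w$ as a fresh child of $x$, and set $\choosePath'(w)$ to the branch through $v$ down to $u$ while $\choosePath'(u)$ picks up the tail of the old $\choosePath(u)$ below $v$, prefixed by the new shadow vertex. Your use of Lemma~\ref{l:chooseT} to argue that $z_0,\dots,z_{i-1}$ are all last-borns is a nice way to make the path from $x$ to $v$ explicit, and your remark that relocating the subtree at $u$ is legitimate precisely because $u$ is a source matches the paper's closing observation.
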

	
\begin{proof}
  Note that $u$ is not a last-born since there exists an arc $uv$ in
  $G$.  Let $x$ be the parent of $u$.  Let $y$ be the last-born of $v$
  (if $v$ is a leaf of $T$, just add $y$ to $T$). By
  Lemma~\ref{l:chooseT}, we may assume that $y$ is not in $G$.  Let
  $v'$ be the child of $v$ such that $v'\in c(u)$ (it is possible that $v'=y$ if
  no child of $v$ is in $c(u)$, but in that case we just add $y$ to $c(u)$).  See
  Figure~\ref{f:subdiv-12}, where the cases $v'\neq y$ and $v'=y$ are
  represented. Notice that the proof below applies to the two cases at the same time.

  Build from $T$ a Burling tree $(T', r', \lastBorn', \choosePath')$
  by removing the edges $xu$, $vv'$ and $vy$ from $T$.  Then add to
  $T'$ the edge $vu$, the path $vy'y$ and the edge $y'v'$, and set
  $\lastBorn'(v)=y'$ and $\lastBorn'(y')=y$.  Add to $T'$ a new vertex
  $w$ adjacent to $x$ (in $T'$).  Set
  $\choosePath'(u)=\choosePath(u) \sm V(P)$ where $P$ is the path of
  $T'$ from $x$ to $v$.  Set $\choosePath'(w) = \{u\} \cup V(P)\sm \{x\}$.
  Replace $\choosePath'(z)$ by $\choosePath'(z) \cup \{y'\}$ for all
  $z$ such that $y\in \choosePath(z)$ or $v'\in \choosePath(z)$.

  We see that the oriented graph $G'$ obtained from $G$ by top-subdividing
   $uv$ into $wu$ and $wv$ can be derived from $T'$ because $ v \notin \choosePath'(u) $ and $ u, v \in \choosePath'(w) $, and the rest of the arcs between the vertices of $ G $ have remained unchanged. Observe that $v$ and
  $u$ are the only vertices of $G$ in $V(P)\sm \{x\}$ since $uv$ is a top-arc of
  $G$. Observe that no vertex $z$ of $G$ has $u$ in $\choosePath(z)$
  since $u$ is a source of $G$. 
\end{proof}

\begin{lemma}
  \label{l:subdivAll}
  Let $G$ be an oriented Burling graph derived from a Burling
  tree~$T$. Any graph obtained from $G$ after performing the following
  operations is an oriented Burling graph:
  \begin{itemize}
  \item Replacing some bottom-arcs $uv$ by a path of length at
    least~1, directed from $u$ to $v$.
  \item Replacing some top-arcs $uv$ such that $u$ is a source of $G$
    by an arc $wv$ and a path of length at least~1 from $w$
    to $u$.
  \end{itemize}
\end{lemma}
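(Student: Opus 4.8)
The plan is to derive Lemma~\ref{l:subdivAll} as a clean corollary of Lemmas~\ref{l:bottomSubdiv} and~\ref{l:topSubdiv} by induction on the total number of subdivision operations performed. The two bullet points of the statement are exactly the two single-step operations handled by Lemmas~\ref{l:bottomSubdiv} and~\ref{l:topSubdiv}, except that each of those lemmas subdivides an arc only \emph{once} (inserting a single new vertex), whereas here we want to replace an arc by a path of arbitrary length $\geq 1$. So the first task is to bootstrap each lemma from ``subdivide once'' to ``subdivide into a path of length $k$'' by iterating it. The key point making this iteration work is the ``moreover'' clauses of both lemmas, which track top and bottom arcs: in Lemma~\ref{l:bottomSubdiv}, after subdividing bottom arc $uv$ into $uwv$, the new arc $wv$ is again a bottom arc, so we may apply Lemma~\ref{l:bottomSubdiv} again to $wv$, and so on, inserting one vertex at a time until the path has the desired length. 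Similarly, in Lemma~\ref{l:topSubdiv}, after top-subdividing top arc $uv$ (with $u$ a source) into $wv$ and $wu$, the arc $wv$ is a top arc and the arc $wu$ is a bottom arc; to lengthen the $w$-to-$u$ part of the path we repeatedly subdivide the bottom arc $wu$ using Lemma~\ref{l:bottomSubdiv}. Throughout, Lemmas~\ref{l:bottomSubdiv} and~\ref{l:topSubdiv} guarantee that every \emph{other} top (resp.\ bottom) arc of $G$ stays a top (resp.\ bottom) arc of the new graph, so these operations do not interfere with one another.

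Concretely, I would first prove two auxiliary claims. Claim 1: if $G$ is derived from $T$ and $uv$ is a bottom arc of $G$, then for every $k\geq 1$ the graph obtained by replacing $uv$ with a directed path of length $k$ from $u$ to $v$ is derived from some Burling tree $T'$, in such a way that every top (resp.\ bottom) arc of $G$ other than $uv$ remains a top (resp.\ bottom) arc with respect to $T'$ — proved by induction on $k$, the base case $k=1$ being immediate and the step applying Lemma~\ref{l:bottomSubdiv} to the current bottom arc incident to $v$. Claim 2: if $G$ is derived from $T$ and $uv$ is a top arc of $G$ with $u$ a source, then for every $k\geq 1$ the graph obtained by replacing $uv$ with an arc $wv$ and a directed path of length $k$ from $w$ to $u$ is derived from some Burling tree, again preserving all other top/bottom arcs — proved by first applying Lemma~\ref{l:topSubdiv} once (handling $k=1$ and creating the bottom arc $wu$) and then applying Claim~1 to lengthen $wu$. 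Note that $w$ is not a source of the intermediate graph (it has in-arcs? no — $w$ is a source), but that is irrelevant since after the first top-subdivision we only ever touch bottom arcs via Claim~1, which has no source hypothesis.

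Finally, for the full statement I would argue by induction on the number $m$ of arcs of $G$ that get replaced (the operations on distinct arcs are independent because each application of Lemmas~\ref{l:bottomSubdiv}/\ref{l:topSubdiv} preserves the top/bottom status of all the \emph{other} designated arcs, and a top arc $uv$ with $u$ a source of $G$ still has $u$ as a source in the graph obtained after processing the other arcs, since none of those operations adds an out-arc at $u$ — indeed for the top-arc operations the only new source-like vertices are the new $w$'s). For $m=0$ there is nothing to do; for the inductive step, process one designated arc using Claim~1 or Claim~2, obtaining a new oriented Burling graph together with a Burling tree in which the remaining $m-1$ designated arcs still have the required top/bottom (and, for top arcs, source) properties, then apply induction.

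The step I expect to be the main obstacle is the bookkeeping in the inductive step of the full statement: I must make sure that when I process the designated arcs one after another, a top arc $uv$ with ``$u$ is a source of $G$'' still satisfies ``$u$ is a source'' in the intermediate graph at the moment I process it. This needs the observation (available from the construction inside Lemma~\ref{l:bottomSubdiv} and Lemma~\ref{l:topSubdiv}, and stated explicitly at the end of the proof of Lemma~\ref{l:topSubdiv}) that these subdivision operations never create a new out-arc at an old vertex of $G$, so in-degree-zero vertices of $G$ stay in-degree-zero — hence the order in which the designated arcs are processed does not matter. Once that is pinned down, everything else is a routine induction.
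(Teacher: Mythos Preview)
Your approach is exactly the paper's --- its entire proof is the one-liner ``Clear by repeatedly applying Lemmas~\ref{l:bottomSubdiv} and~\ref{l:topSubdiv}'' --- and you have correctly fleshed out the iteration and the bookkeeping. One minor slip in your final paragraph: the observation you need is that these operations never create a new \emph{in}-arc at an old vertex of $G$ other than the source $u$ whose top arc is currently being top-subdivided (not ``out-arc''; indeed $u$ \emph{does} acquire the in-arc $wu$, so your literal claim is false for that vertex); since each vertex has at most one top arc, the remaining designated sources are untouched and stay sources until their turn, and the induction goes through as you describe.
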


\begin{proof}
  Clear by repeatedly applying Lemmas~\ref{l:bottomSubdiv}
  and~\ref{l:topSubdiv}. 
\end{proof}

\begin{figure}[p]
  \begin{center}
    \includegraphics[width=8cm]{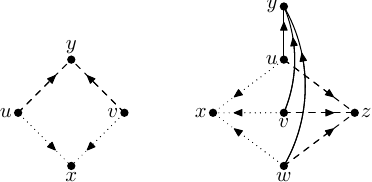}
    \caption{Some subdivisions of complete bipartite graphs as Burling graphs. \label{f:firstExample-7}}
  \end{center}
\end{figure}

\begin{figure}[p]
  \begin{center}
    \includegraphics[width=10cm]{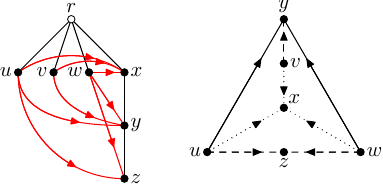}
    \caption{Some subdivisions of $K_4$ that are Burling graphs. \label{f:SK4}}
  \end{center}
\end{figure}
	
\begin{figure}[p]
  \begin{center}
    \includegraphics[width=12cm]{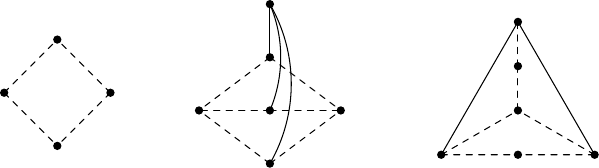}
    \caption{Some Burling graphs. Dashed edges can be subdivided.\label{f:firstExample-14}}
  \end{center}
\end{figure}

Let us now give applications of Lemma~\ref{l:subdivAll}.  
In Figures~\ref{f:firstExample-7} and~\ref{f:SK4}, some oriented Burling graphs are represented. In both figures, dotted arcs represent top-arcs that have a source as one of their end-points, dashed arcs represent bottom-arc, and all the other arcs are represented by solid arcs. Therefore, by Lemma~\ref{l:subdivAll}, by top-subdividing any of the dotted arcs and subdividing any of the dashed arcs, we obtain an oriented Burling graph. In Figure~\ref{f:firstExample-7}, the oriented graphs derived from the
Burling trees from Figure~\ref{f:square} are represented. In Figure~\ref{f:SK4}, a Burling tree,
together with the oriented graph derived from it, are represented. By considering its underlying graph, we see how to obtain several subdivisions
of $K_4$, namely any subdivision in which every edge except $ uy $ and $ wy $ is possibly subdivided. We will see later that this figure in fact provides all
subdivisions of $K_4$ that are Burling graphs (see
Lemma~\ref{l:K4}). As a consequence, all graphs arising from the three
non-oriented graphs in Figure~\ref{f:firstExample-14} by subdividing
dashed edges are Burling graphs.

\begin{lemma} \label{thm:contraction} Let $G$ be an oriented derived
  graph, and let $ uv $ be an arc such that $ N^+(u) = \{v\} $ and
  $ N^-(v) = \{u\}$. Then the graph $ G' $ obtained by contracting
  $ uv $ is also an oriented derived graph and the top-arcs (resp.\ bottom-arcs)
  of $ G $ except $ uv $ are the top-arcs (resp.\ bottom-arcs) of $ G'$.
\end{lemma}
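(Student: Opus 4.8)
The plan is to describe $G'$ explicitly and then exhibit a Burling tree deriving it, obtained from a well-chosen tree for $G$ by a local surgery around $u$ and $v$. Concretely, $G'$ is the graph with vertex set $(V(G)\sm\{u,v\})\cup\{z\}$ ($z$ a new vertex) in which all adjacencies among $V(G)\sm\{u,v\}$ are as in $G$, in which $w\in V(G)\sm\{u,v\}$ is an in-neighbour of $z$ exactly when $wu\in A(G)$ and an out-neighbour of $z$ exactly when $vw\in A(G)$, and in which $z$ has no other neighbour: the hypotheses $N^+(u)=\{v\}$ and $N^-(v)=\{u\}$ are exactly what is needed to check that this is the graph obtained by contracting $uv$, since then the contracted vertex gets in-neighbourhood $(N^-(u)\cup N^-(v))\sm\{u,v\}=N^-(u)$ and out-neighbourhood $(N^+(u)\cup N^+(v))\sm\{u,v\}=N^+(v)$. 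By Lemma~\ref{l:chooseT} I may assume $G$ is derived from a Burling tree $(T,r,\lastBorn,\choosePath)$ in which a vertex lies in $V(G)$ if and only if it is neither the root nor a last-born; in particular $u$ and $v$ are non-last-born vertices different from $r$.

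The crucial structural point I would record first is this. Since $uv\in A(G)$, the vertex $v$ lies on the branch $\choosePath(u)$, which starts at $q_0:=\lastBorn(p(u))$; let $q_0q_1\cdots q_m$ be the branch of $T$ from $q_0$ to $v$, so $q_m=v$ and $q_{m-1}=p(v)$. Every $q_i$ lies on $\choosePath(u)$, hence every $q_i$ belonging to $V(G)$ is an out-neighbour of $u$, i.e.\ equals $v$; so $q_0,\dots,q_{m-1}$ are all shadow vertices. Also $m\ge 1$ (as $q_0$ is a last-born and $v$ is not), and $q_{m-1}$ is not a leaf (it has the child $v$), so $t:=\lastBorn(q_{m-1})=\lastBorn(p(v))$ exists and $\choosePath(v)$ is a branch starting at $t$. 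Finally, since $G$ is oriented we have $vu\notin A(G)$, so $u\notin\choosePath(v)$; and $v\notin\choosePath(v)$; hence $\choosePath(v)$ contains no vertex of $\{u,v\}$.

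Now I would define $T'$ from $T$ as follows: rename $u$ as the new vertex $z$ (so $z$ keeps the parent $p(u)$ and the children of $u$, and is still a non-last-born child of $p(u)$); keep every other vertex, every value of $\lastBorn$, and every set $\choosePath(w)$ for $w\neq u$ (with $u$ renamed $z$ wherever it occurs, which is precisely in those $\choosePath(w)$ with $wu\in A(G)$); turn $v$ into a shadow vertex, leaving its subtree and $\choosePath(v)$ untouched; and set
\[
\choosePath'(z)=\{q_0,\dots,q_{m-1}\}\cup\choosePath(v).
\]
This is a legal choose-value because $\{q_0,\dots,q_{m-1}\}\cup\choosePath(v)$ is the vertex set of the branch $q_0\to\cdots\to q_{m-1}\to t\to\cdots$ obtained by concatenating $q_0\cdots q_{m-1}$ with $\choosePath(v)$ (which starts at $t=\lastBorn(q_{m-1})$), and this branch starts at $q_0=\lastBorn(p(u))=\lastBorn'(p(z))$; all remaining data of $T'$ are inherited from $T$, so $T'$ is a Burling tree. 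It then remains to check that the graph fully derived from $T'$, restricted to $V(G')=(V(G)\sm\{u,v\})\cup\{z\}$, is $G'$. For $w\in V(G')\sm\{z\}$ nothing has moved, so its out-neighbourhood is $\choosePath(w)\cap V(G)$ with $u$ relabelled to $z$; in particular $w$ points to $z$ precisely when it pointed to $u$. For $z$: the $q_i$'s are shadow vertices not in $V(G')$, and $\choosePath(v)$ meets $V(G')$ exactly in $N^+(v)$ (it meets $V(G)$ in $N^+(v)$, and it avoids $u$, $v$ and $z$), so the out-neighbourhood of $z$ is $N^+(v)$; and the in-neighbourhood of $z$ is $\{w:z\in\choosePath'(w)\}=N^-(u)$. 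This is exactly $G'$, so $G'$ is a derived graph.

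For the ``moreover'' part, note that $T'$ is obtained from $T$ only by the relabelling $u\mapsto z$ and by replacing the single branch $\choosePath(u)$ with a branch that, from $t=\lastBorn(p(v))$ on, coincides with $\choosePath(v)$; hence for every vertex of $G'$ the order, by distance to the root, of its out-neighbours along the relevant branch of $T'$ is the same as in $G$ (after the identification $u\leftrightarrow z$ and the deletion of $uv$), so its closest-to-root and its furthest-from-root out-neighbours are unchanged, which is exactly the assertion that the top-arcs (resp.\ bottom-arcs) of $G'$ are those of $G$ other than $uv$ (which is both the top-arc and the bottom-arc at $u$). The one genuinely delicate point — and the only place the hypothesis $N^+(u)=\{v\}$ is really used — is the observation that the tree-path from $\lastBorn(p(u))$ down to $v$ meets $V(G)$ only in $v$: this is what lets the rerouted branch $\choosePath'(z)$ have the correct trace $N^+(v)$ on $V(G')$. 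Everything else is routine bookkeeping about which vertices are shadow and about the fact that neither $u$ nor $v$ lies in the choose-path of a vertex of $G$ other than the other one.
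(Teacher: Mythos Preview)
Your proof is correct and follows essentially the same construction as the paper: redefine the choose-path of $u$ to be the concatenation of the branch from $\lastBorn(p(u))$ down to $p(v)$ with $\choosePath(v)$, and drop $v$ from $V(G)$. The paper's version is slightly more economical in that it does not invoke Lemma~\ref{l:chooseT} (the edge case $v=\lastBorn(p(u))$ is harmless, since then $\choosePath(v)=\varnothing$ and the new choose-path is simply the branch down to $p(v)$), and it keeps $u$ as the name of the contracted vertex rather than introducing a fresh label $z$; but these are cosmetic differences, and your more explicit bookkeeping is a fair trade for clarity.
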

	
\begin{proof} 
  Suppose that $G$ is derived from the Burling tree
  $ (T, r, \lastBorn, \choosePath) $. Let $S$ be the vertex-set
  (possibly empty) of the path starting at the last-born of the parent
  of $ u $ and ending at the parent of $ v $ in $ T $. Notice that
  $ S \subseteq \choosePath(u) $, and since
  $ \choosePath(u) \cap V(G) = \{v\}$, no vertex of $S$ and no
  descendant of $v$ in $c(u)$ is a vertex of $ G $.
		
  Define $ \choosePath'(u) = S \cup c(v) $ and define
  $ \choosePath'(w) = \choosePath(w) $ for any vertex $ w $ of $ T $
  other than $ u $.  It is easy to see that
  $ (T, r, \lastBorn, \choosePath') $ is a Burling tree. The graph
  $ G' $ is derived from this new Burling tree. Indeed, $ G' $ is the subgraph of the graph fully derived from $ (T, r, \lastBorn, \choosePath') $ induced on $ V(G) \sm \{v\} $.

  Finally, it is easy to see that no top-arcs or bottom-arcs are changed except for~$uv$.
\end{proof}

\section{$k$-Burling graphs}
\label{sec:kBurling}

An oriented graph $G$ is a \emph{oriented $k$-Burling graph} if it can
be derived from a Burling tree $T$ such that on each branch of $T$, at
most $k$ vertices belong to $G$. In such a case, we say that $ G $ is derived from $ T $ as a $ k$-Burling graph. Note that the empty graph is the
unique 0-Burling graph (in fact, the empty graph is $k$-Burling for all
integers $k\geq 0$).  In the next sections, 2-Burling graphs will be
useful and we need to describe them more precisely. Since it is not
much harder to describe $k$-Burling graphs in general, we do this here
by the mean of the so-called \emph{$k$-sequential graphs}.

The \emph{nobility} of an oriented graph is the smallest integer $k$
such that $G$ is a $k$-Burling graph. The \emph{nobility} of a
non-oriented Burling graph $G$ is the smallest nobility of an oriented
Burling graph $G'$ such that $G$ is the underlying graph of $G'$.   

\begin{figure}
  \centering
  \includegraphics[height=8cm]{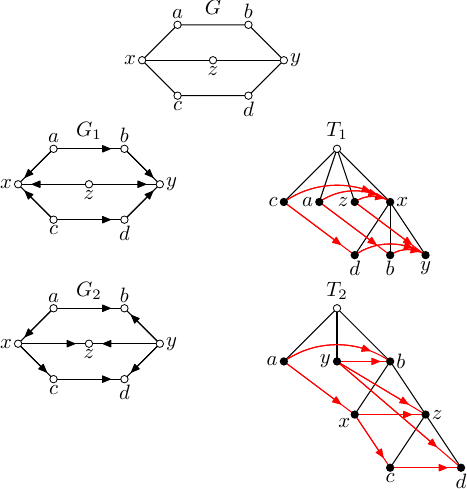}
  \caption{Nobility of a graph.\label{f:kBur}}
\end{figure}

Let us see examples. On Figure~\ref{f:kBur}, two oriented graphs $G_1$
and $G_2$ are represented.  Since $G_1$ can be derived from $T_1$, we
see that $G_1$ is a 2-Burling graph.  In fact, the nobility of $G_1$
must be 2, because since $c$ is a source of degree~2, its two
out-neighbors must be on the same branch.  Similarly, $G_2$ is a
3-Burling graph and has nobility~3 (because of $y$ being a source of
degree~3).  Hence, $G$ is a 2-Burling graph (in fact it has nobility 2
because we will soon see that only forests have nobility 1).  This
shows that an oriented graph (for instance $G_2$) may have a nobility
different from the nobility of its underlying graph.

\begin{figure}
  \centering
  \includegraphics[width=10cm]{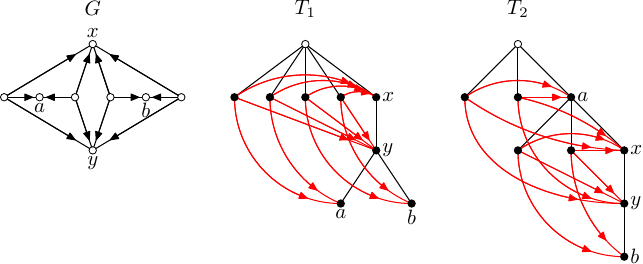}
  \caption{$G$ can be derived from $T_1$ and $T_2$.\label{f:kBurTwoRep-4}}
\end{figure}

In Figure~\ref{f:kBurTwoRep-4}, an oriented Burling graph $G$ with
nobility~3 is represented together with two Burling graphs $T_1$ and
$T_2$. Observe that $T_2$ has a branch that contains four vertices of
$G$. So, $ G $ is derived from $T_1 $ as a 3-Burling graph, and is derived from $ T_2 $ as a 4-Burling graph.

It should be pointed out that the nobility of an oriented graph may be
strictly greater than its maximum out-degree, as shown on
Figure~\ref{f:nobilityNotMd-3}.  The graph $G$ has three sources with
out-neighborhood $\{1, 2, 3\}$, $\{2, 3, 4\}$ and $\{3, 4, 5\}$. 
Assume that $ G $ is derived from a Burling tree $ T $. At least four vertices among 1, 2, 3, 4 and~5 must lie on the same branch of $T$. We only sketch the proof: on the branch of $ T $ that contains 1, 2, and 3, either 1 is the farthest vertex from the root or it is not. In the former case, if 4 is not a descendant of both 2 and 3, then 1, 2, 3, and 4 are all on a common branch, and if 4 is a descendant of both 2 and 3, then any branch containing 4, contains both 3, and 5. So, in particular, 2, 3, 4, and 5 are on a common branch. In the latter case (where 1 is not the farthest from root among 1, 2, and 3), any branch containing both 2 and 3 contains 1 as well. So, in particular 1, 2, 3, and 4 are on a common branch.
So, the nobility of $G$ is~4.

\begin{figure}
  \centering
  \includegraphics[height=4cm]{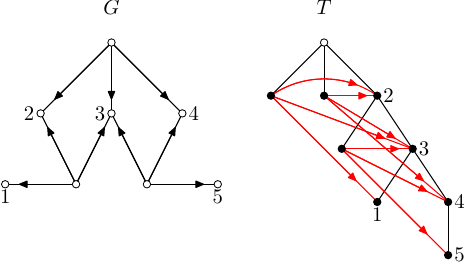}
  \caption{$G$ has maximum out-degree 3 and nobility 4.\label{f:nobilityNotMd-3}}
\end{figure}

\subsection*{1-Burling graphs}

An \emph{in-tree} is any oriented graph obtained from a rooted tree
$(T, r)$ by orienting every edge towards the root.  Formally, $e=uv$
is oriented from $u$ to $v$ if and only if $v$ is on the unique path of
$T$ from $u$ to $r$.  Notice that in an in-tree, every vertex but the
unique sink, has a unique out-neighbor. An \emph{in-forest} is an
oriented forest whose connected components are in-trees.

\begin{lemma}
  \label{l:1B}
  An oriented graph $G$ is an in-forest if and only if it is a
  1-Burling oriented graph.
\end{lemma}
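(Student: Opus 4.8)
The plan is to prove the equivalence in two directions, using the structural lemmas already established.

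\textbf{Every 1-Burling oriented graph is an in-forest.} Let $G$ be derived from a Burling tree $T$ with at most one vertex of $G$ on each branch of $T$. First I would observe that it suffices to show each connected component of $G$ is an in-tree, so I may assume $G$ is connected. The key claim is that every vertex of $G$ has out-degree at most~1: indeed, if $uv, uv'\in A(G)$ with $v\neq v'$, then $v$ and $v'$ both lie in $\choosePath(u)$, which is the vertex-set of a single branch of $T$; that branch would then contain two vertices of $G$, contradicting the 1-Burling hypothesis. Combined with Lemma~\ref{lem:DG-no-cycle} (no directed cycles), a connected oriented graph in which every vertex has out-degree at most~1 has exactly $|V(G)|-1$ arcs, hence its underlying graph is a tree; and the unique vertex of out-degree~0 is forced (it exists because the arc count forces exactly one such vertex, or alternatively because a source of the reversed orientation / sink exists). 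Orienting the edges of this tree toward that sink recovers $G$, so $G$ is an in-tree. A small point to be careful about: I should check that $G$ connected and acyclic with all out-degrees $\le 1$ really has a unique sink — this follows since $\sum \text{out-deg} = |E| = |V|-1 < |V|$ forces at least one vertex of out-degree~0, and if there were two sinks $s_1\ne s_2$, following out-arcs from any third vertex gives a maximal directed path ending at a sink, but the tree structure plus "each non-sink has exactly one out-neighbor" forces this sink to be unique.

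\textbf{Every in-forest is a 1-Burling oriented graph.} Again it is enough to handle a single in-tree $G$ obtained from a rooted tree $(T_0, r_0)$ by orienting each edge toward $r_0$; then take disjoint unions (the disjoint union of 1-Burling graphs is 1-Burling, by taking the disjoint union of the Burling trees under a common new root whose last-born branch is empty, or simply by placing them on incomparable branches). For the in-tree $G$ I would construct a Burling tree $T$ explicitly. The natural idea: the vertices of $G$ should sit on pairwise distinct branches of $T$, with the ancestor relation in $T$ mirroring the out-arc relation in $G$. Concretely, order $V(G)$ so that a vertex comes after its out-neighbor (a topological order with the sink $r_0$ first). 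Build $T$ by processing vertices in this order: each vertex $v$ of $G$ with out-neighbor $u$ is attached in $T$ as a non-last-born child of $u$, and we set $\choosePath(v)$ to be the branch consisting of... — here I need to be slightly cleverer, because the in-tree has arcs from $v$ only to $u=p_G(v)$, and in a derived graph the arc $vu$ requires $u\in\choosePath(v)$, i.e.\ $u$ lies on the last-born branch of $p_T(v)$. So I would instead make $p_T(v) = $ some auxiliary shadow vertex whose last-born branch contains exactly $u$, and let $\choosePath(v) = \{u\}$. This is easily arranged by inserting, for each vertex, a small gadget of shadow (last-born) vertices. The cleanest implementation: make $T$ have vertex set $V(G)$ plus one shadow vertex $s_v$ for each $v$; set $p_T(v) = s_v$, $p_T(s_v) = u$ where $u = p_G(v)$ (and $p_T(s_{r_0}) = $ a new global root), declare every $s_v$ a last-born of its parent and every $v\in V(G)$ a non-last-born, and set $\choosePath(v) = \{u\}$ when $u=p_G(v)$ exists, $\choosePath(r_0)=\varnothing$. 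Then verify: (1) this is a valid Burling tree — the only nontrivial condition is that $\choosePath(v)=\{u\}$ is a branch starting at the last-born of $p_T(s_v)=u$'s... wait, at the last-born of $p_T(v) = s_v$; since $s_v$'s only... I will need to give $s_v$ a last-born branch containing $u$, so really $u$ should be a descendant in $T$ of $s_v$'s last-born — adjust the gadget so that $s_v$'s last-born is precisely $u$. With the gadget set up so that for each $v$ the vertex $u=p_G(v)$ is the last-born of the shadow vertex $s_v = p_T(v)$, we get $\choosePath(v)=\{u\}$ is the valid (length-1) branch, the fully derived graph has arc set exactly $\{vu : u = p_G(v)\} = A(G)$, and each branch of $T$ contains at most one vertex of $V(G)$ because along any root-to-leaf path the non-shadow vertices are... here is the real check: a branch of $T$ is a descending path; its non-shadow vertices $v_1, v_2, \dots$ would satisfy $v_{i+1}$ a descendant of $v_i$; but each $v_i$ has the shadow vertex $s_{v_i}$ as its unique $T$-parent and as its only child the vertex $u_i = p_G(v_i)$ which is an \emph{ancestor}, not a descendant — so in fact no non-shadow vertex has a non-shadow strict descendant, giving at most one per branch.

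\textbf{Main obstacle.} The forward direction is routine given Lemma~\ref{lem:DG-no-cycle}. The real work is the reverse direction: designing the gadget of shadow vertices so that (a) $\choosePath$ sends each $v$ to a legitimate branch starting at the last-born of its $T$-parent, (b) the fully derived graph is exactly $G$ and not something larger, and (c) every branch of $T$ meets $V(G)$ in at most one vertex. The tension is that the in-tree's arc $vu$ points \emph{toward the root of $G$}, whereas $\choosePath$ forces $u$ to be a \emph{descendant} of (the last-born of) $v$'s parent in $T$ — so the $T$-ancestor order and the $G$-out-order run opposite to each other, and the shadow gadget has to mediate between them. Once the gadget is pinned down correctly, conditions (a)–(c) are short verifications, and I expect the authors' proof to differ from mine mainly in how explicitly versus inductively this gadget is presented (likely an induction on $|V(G)|$, repeatedly attaching one leaf of $G$ via the "transform a last-born" move analogous to Lemma~\ref{l:chooseT}).
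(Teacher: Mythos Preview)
Your forward direction is correct and essentially equivalent to the paper's: you argue globally (out-degree $\le 1$ plus acyclicity forces an in-tree), while the paper argues by induction (every 1-Burling graph has a source of degree $\le 1$, remove it, repeat). Both are fine.

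The reverse direction, however, has a genuine gap. Your explicit gadget is internally inconsistent. You first set $p_T(s_v)=u$ (so $u$ is the \emph{parent} of $s_v$ in $T$), then say ``adjust the gadget so that $s_v$'s last-born is precisely $u$'' (so $u$ is a \emph{child} of $s_v$); later still you write that $v$'s only $T$-child is $u$. These cannot all hold. More fundamentally, for $\choosePath(v)=\{u\}$ to be legal, $u$ must lie on a branch starting at the last-born of $p_T(v)=s_v$. If that last-born is $v$ itself, then $v$ is a last-born and $\choosePath(v)=\varnothing$; if it is some other child $\ell$ of $s_v$, then $u$ must descend from $\ell$, but you have placed $u$ as a child of $v$, not of $\ell$. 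So no placement of a single shadow vertex $s_v$ per vertex makes your scheme work. You correctly diagnose the obstacle (the $T$-ancestor order and the $G$-out order run opposite to each other) but never actually resolve it.

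The paper's fix is exactly the inductive step you guessed at in your last sentence, and it is worth seeing why it dissolves the difficulty. Given a 1-Burling graph $G$ derived from $(T,r,\lastBorn,\choosePath)$ and a vertex $v\in V(G)$, to add a new source $u$ with unique out-arc $uv$, one adds a new root $r'$ above $r$, declares $r$ the last-born of $r'$, attaches $u$ as a non-last-born child of $r'$, and sets $\choosePath'(u)$ to be any branch of $T$ through $v$ (which meets $V(G)$ only in $v$ by the 1-Burling hypothesis). The new vertex goes at the \emph{top} of $T$, so vertices added later in the induction (the leaves of the in-tree $G$) sit nearer the root of $T$, while the sink of $G$ sits deepest --- this is precisely the ``opposite order'' you noticed, handled automatically. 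Each step preserves 1-Burling because the only new branch containing a vertex of $G$ is the branch $r'u$, which contains just $u$.
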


\begin{proof}
  Suppose that $G$ is derived  from a Burling tree $T$ as a 1-Burling graph.  By
  Lemma~\ref{l:source}, every Burling graph contains a source and
  since the out-neighborhood of any vertex is included in a branch and
  $G$ is 1-Burling, this source has degree at most~1.  So, every
  1-Burling oriented graph has a source of degree at most~1.  This
  implies by an easy induction that every 1-Burling oriented graph
  is an in-forest.

  To prove the converse statement, it is enough to check that for
  every 1-Burling graph $G$ derived from a Burling tree
  $(T, r, \lastBorn, \choosePath)$ and every vertex $v$ of $G$, adding
  an in-neighbor $u$ of $v$ with degree~1 yields a 1-Burling graph
  $G'$.  Here is how to construct $G'$.  Build a rooted tree $T'$ from
  $T$ by adding a new root $r'$ adjacent to $r$.  Define for $V(T')$
  the functions $\lastBorn'$ and $\choosePath'$ as equal to
  $\lastBorn$ and $\choosePath$ for vertices of $T$.  Nominate $r$ as
  the last-born child of $r'$ and add $u$ as a non-last-born child of
  $r'$.  Then consider a branch $B$ of $T$ that contains $v$ and set
  $\choosePath'(u) = B$.  Note that by definition of 1-Burling graphs,
  $B\cap V(G) = \{v\}$.  So $G'$ is indeed derived from
  $(T', r', \lastBorn', \choosePath')$ and it is clearly a 1-Burling
  graph.
\end{proof}

\subsection*{Top-sets}

When $G$ is derived form a Burling tree
$(T, r, \lastBorn, \choosePath)$, we call the \emph{top-set} of $G$ the
set $S$ of all vertices $v$ of $G$ such that $v$ is the unique vertex
of $G$ in the branch of $T$ from $r$ to $v$.  See
Figure~\ref{f:seq-3}. The top-set of the graph presented in Figure~\ref{f:seq-3} is $ \{a,b,c,d,e\} $.

\begin{figure}
  \begin{center}
    \includegraphics[width=10cm]{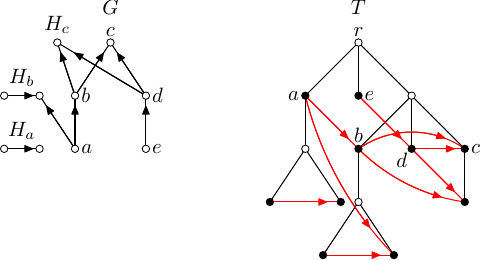}
    \caption{A Burling graph viewed as a sequential graph.\label{f:seq-3}}
  \end{center}
\end{figure}

\begin{lemma}
  \label{l:topForest}
  If $S$ is the top-set of the oriented graph $G$ derived from a
  Burling tree, then $G[S]$ is an in-forest. Moreover, if $G$ is a
  $k$-Burling graph ($k\geq 1$), then $G\sm S$ is a $(k-1)$-Burling
  graph.
\end{lemma}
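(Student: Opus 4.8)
The plan is to analyze the structure that the definition of the top-set imposes on the Burling tree, and in particular to understand what the parents of top-set vertices look like.

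\textbf{Step 1: $G[S]$ is an in-forest.}
By Lemma~\ref{l:1B}, it suffices to show that $G[S]$ is a $1$-Burling graph, or — more directly — to show that every vertex of $G[S]$ has out-degree at most $1$ in $G[S]$, and then invoke the inductive argument inside the proof of Lemma~\ref{l:1B} (a source of degree at most~$1$, remove it, repeat). Actually the cleanest route is: I claim that if $u\in S$ and $uv\in A(G)$ with $v\in S$, then $v$ is in fact the \emph{unique} out-neighbor of $u$ lying in $S$. Suppose $uv, uv'\in A(G)$ with $v, v'\in S$ and $v\neq v'$. Then both $v$ and $v'$ lie in $\choosePath(u)$, which is the vertex-set of a single branch of $T$ starting at $\lastBorn(p(u))$; hence $v$ and $v'$ lie on a common branch from the root, say $v$ is an ancestor of $v'$. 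But then the branch of $T$ from $r$ to $v'$ passes through $v$, and $v\in V(G)$, contradicting $v'\in S$. So each $u\in S$ has at most one out-neighbor in $S$, and thus $G[S]$ has a source of degree (in $G[S]$) at most~$1$; deleting it and iterating shows $G[S]$ is an in-forest, exactly as in Lemma~\ref{l:1B}. (One could equally phrase this as a clean application of Lemma~\ref{l:1B} after checking $G[S]$ is $1$-Burling, but the direct degree argument avoids rebuilding a tree.)

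\textbf{Step 2: $G\sm S$ is $(k-1)$-Burling.}
Here I would use Lemma~\ref{l:chooseT} to first replace $T$ by a Burling tree in which the vertices of $G$ are exactly the non-root, non-last-born vertices; this does not change which vertices are in $G$ nor the branch orders, hence does not change $S$. Now I want to exhibit a Burling tree $T''$ from which $G\sm S$ is derived and in which every branch contains at most $k-1$ vertices of $G\sm S$. The key observation is: for each $v\in V(G)\sm S$, there is at least one vertex of $G$ strictly above $v$ on the branch from $r$ to $v$; in particular, on any branch $B$ of $T$, if $B$ contains $m\geq 1$ vertices of $G$, the topmost one lies in $S$ (the branch from $r$ to it, being an initial segment of $B$ up to that vertex, contains no other vertex of $G$). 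Hence every branch of $T$ contains at most $k-1$ vertices of $G\sm S$. The subtlety is that ``deleting $S$ from $G$'' is just taking an induced subgraph, and $G\sm S$ is derived from the \emph{same} $T$ (restrict the full-derived graph to $V(G)\sm S$); since on each branch of $T$ at most $k-1$ vertices of $V(T)$ lie in $V(G)\sm S$, by definition $G\sm S$ is $(k-1)$-Burling. So in fact no new tree is needed: $T$ itself works.

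\textbf{Main obstacle.}
The only real subtlety is making sure that ``topmost vertex of $G$ on a branch'' is well-behaved: a given branch $B$ (from root to a leaf) meets $G$ in some set of vertices, and I must be certain its topmost element $w$ satisfies $w\in S$, i.e.\ that the branch from $r$ to $w$ contains no other vertex of $G$. This is immediate because that branch is an initial segment of $B$ and $w$ is the closest-to-root vertex of $G$ on $B$. The second point to be careful about is that distinct branches may share vertices near the root, so ``at most $k-1$ per branch of $G\sm S$'' must be checked per branch, which is exactly what the above gives since $S$ removes the top occurrence from \emph{every} branch simultaneously (a vertex of $S$ on branch $B$ is the topmost $G$-vertex of $B$, and conversely). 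I expect Step~1's uniqueness-of-out-neighbor-in-$S$ claim to be the part needing the most care to phrase correctly, but it is short.
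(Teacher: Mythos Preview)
Your proof is correct and takes essentially the same approach as the paper, just with the details filled in: the paper's proof simply asserts that $G[S]$ is $1$-Burling and $G\setminus S$ is $(k-1)$-Burling ``from the definition of the top-sets'' and then invokes Lemma~\ref{l:1B}. Your out-degree argument in Step~1 is a minor rephrasing of this (and you note the Lemma~\ref{l:1B} alternative yourself); the use of Lemma~\ref{l:chooseT} in Step~2 is, as you also observe, unnecessary since the original tree $T$ already witnesses that $G\setminus S$ is $(k-1)$-Burling.
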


\begin{proof}
  The graph $G[S]$ is clearly 1-Burling from the definition of the
  top-sets and also $G\sm S$ is a $(k-1)$-Burling graph.  So, $G[S]$ is
  an in-forest by Lemma~\ref{l:1B}. 
\end{proof}

  When  $G$ is derived from a Burling tree $T$, every vertex $u$
  of $G$ has a unique ancestor in the top-set of $G$.  This ancestor
  is called the \emph{top-ancestor} of $u$.

  \begin{lemma}
    \label{l:topAncestor}
    Let $G$ be derived from a Burling tree
    $(T, r, \lastBorn, \choosePath)$. Let $u$ and $v$ be two vertices of
    $G$ with top-ancestors $u'$ and $v'$, respectively. If $uv$ is an
    arc of $G$ then either:
    \begin{enumerate}[label=(\roman*)]
    \item $u'=v'$, $u'\neq u$, and $v' \neq v$, or
     \item $u=u'$ and $uv'\in A(G)$.
    \end{enumerate}
    \end{lemma}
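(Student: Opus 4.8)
The plan is to analyze the relative positions in $T$ of the four vertices $u,u',v,v'$, using Lemma~\ref{lem:simple} as the main tool. Recall that $u'$ is the top-ancestor of $u$, meaning $u'$ is an ancestor of $u$ in $T$ and $u'$ is the unique vertex of $G$ on the branch from $r$ to $u'$; in particular $u'$ is an ancestor of $u$, and there is no vertex of $G$ strictly between $r$ and $u'$, so $u'$ is the ancestor of $u$ in $G$ that is closest to the root. Likewise for $v'$ and $v$. Since $uv\in A(G)$, Lemma~\ref{lem:simple} gives that $p(u)$ is an ancestor of $p(v)$, hence $u$ is a strict ancestor of $v$ in $T$ (they are distinct and comparable). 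Consequently the branch from $r$ to $v$ passes through $u$, and therefore also through $u'$ (as $u'$ is an ancestor of $u$). So $u'$ is an ancestor of $v$.

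The next step is to locate $v'$ relative to $u'$. Since $u'$ is an ancestor of $v$ that belongs to $G$, and $v'$ is the ancestor of $v$ in $G$ closest to the root, $v'$ must be an ancestor of $u'$ (or equal to it); in fact the only vertices of $G$ on the branch from $r$ to $v$ are $v'$ and whatever lies below it, so $v'$ lies weakly between $r$ and $u'$, and since $v'$ is itself the topmost $G$-vertex on that branch while $u'$ is the topmost $G$-vertex on the branch to $u$ (a sub-branch), we get $v' = u'$. So in all cases $u' = v'$. Now I split according to whether $u = u'$.

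\textbf{Case $u \ne u'$.} Then $u'$ is a strict ancestor of $u$, hence a strict ancestor of $v$ as well, so $v' = u' \ne v$, and $v' = u' \ne u$; together with $u'=v'$ this is conclusion (i). \textbf{Case $u = u'$.} Then $v' = u' = u$, so $u$ is the top-ancestor of $v$, i.e.\ $u$ is the unique vertex of $G$ on the branch from $r$ to $v'=u$; we must show $uv' \in A(G)$. Since $uv\in A(G)$, the vertex $v\in\choosePath(u)$, so $\choosePath(u)$ is (the vertex set of) a nonempty branch $B$ starting at the last-born of $p(u)$ and containing $v$. The vertex $v'$ is an ancestor of $v$ lying in $G$; I claim $v'$ lies on $B$. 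Indeed $v'$ is the top-ancestor of $v$, and the first vertex of $B$ is a child of $p(u)$, which is a strict ancestor of $u=v'$... wait — here I must be careful, since $v'=u$ is an ancestor of $v$ but $B$ starts below $p(u)$, a sibling-branch of $u$. Let me re-examine: actually $u\in\choosePath(u)$ is false in general, so "$uv'\in A(G)$ with $v'=u$" would be a loop, which is impossible. This signals that the case $u=u'$ forces $v'\ne u$ after all, or that conclusion (ii) should be read with $v'$ a genuine descendant; I expect the intended reading is that when $u=u'$, one has $v'\ne v$ and the arc $uv'$ makes sense because $v'$ is a strict descendant of $u$ lying in the top-set — contradiction with $u=u'$ being the top-ancestor of $v$. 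So the honest resolution: when $u=u'$, the top-ancestor $v'$ of $v$ satisfies $v'=u'=u$; but $v'$ must lie on the branch $B=\choosePath(u)$ which starts strictly below $p(u)$ in a different branch than $u$, so $v'$ cannot equal $u$ unless $v'$ is reached differently — hence actually $u=u'$ is impossible unless... The cleanest route, which I would write up, is: show $v'\in\choosePath(u)$ directly. Since $v\in\choosePath(u)=B$ and $B$ is a branch, every ancestor of $v$ that also lies on $B$ is in $B$; and $v'$, being an ancestor of $v$ in $G$, lies on $B$ provided $v'$ is a descendant of the first vertex of $B$. The first vertex of $B$ is $\lastBorn(p(u))$. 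Now $p(v')=p(u')=p(u)$ when $u=u'$? No — I would instead argue: the top-ancestor $v'$ of $v$ is the topmost $G$-vertex above $v$; since $u$ is such a $G$-vertex ($u=u'$ is an ancestor of $v$), and $\lastBorn(p(u))$ is the first vertex of $B$ with $B\ni v$, the topmost $G$-vertex above $v$ is either $u$ or lies on $B$. If it lies on $B$, it is in $\choosePath(u)$, giving $uv'\in A(G)$, which is conclusion (ii). If it is $u$, then $v'=u=u'$ and trivially $uv'$ "is" the degenerate case — but since loops are forbidden, this forces $B$ to contain a $G$-vertex above $v$, handled by the previous subcase.

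\textbf{Main obstacle.} The delicate point is precisely this last analysis in the case $u=u'$: pinning down that the top-ancestor $v'$ of $v$ must actually lie inside the branch $\choosePath(u)$ (so that $uv'$ is a legitimate arc of $G$), rather than being $u$ itself. This hinges on carefully tracking, via Lemma~\ref{lem:simple} and the definition of $\choosePath$, which branch of $T$ contains $v$ and which contains $v'$, and on the fact that distinct vertices of the top-set on nested branches coincide. I would handle it by: (1) noting $v\in\choosePath(u)$, a branch $B$ starting at $\lastBorn(p(u))$; (2) observing every vertex of $G$ on the branch from $r$ to $v$ that is a strict descendant of $\lastBorn(p(u))$ lies on $B$; (3) concluding the top-ancestor $v'$ of $v$ is either $u$ (if no $G$-vertex on $B$ is an ancestor of $v$, i.e.\ $v$ itself is the topmost, impossible since $v\ne$ its own strict ancestor... ) or the topmost $G$-vertex of $B$ above $v$, which lies in $\choosePath(u)$, yielding $uv'\in A(G)$. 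Sorting out the degenerate subcase cleanly is the part that needs care.
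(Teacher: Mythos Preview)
Your argument contains a genuine error at the very first step. From $uv\in A(G)$ you invoke Lemma~\ref{lem:simple} to get that $p(u)$ is an ancestor of $p(v)$, and then conclude ``hence $u$ is a strict ancestor of $v$ in $T$''. This inference is false: $p(u)$ being an ancestor of $p(v)$ does \emph{not} make $u$ an ancestor of $v$. In fact the opposite is true in this setting: since $v\in\choosePath(u)$ and $\choosePath(u)$ is a branch starting at $\lastBorn(p(u))$, the vertex $v$ lies in the subtree rooted at a \emph{sibling} of $u$, not in the subtree rooted at $u$. The branches from $r$ to $u$ and from $r$ to $v$ agree only up to $p(u)$ and then diverge. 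Consequently your claim that ``the branch from $r$ to $v$ passes through $u$'' is wrong, and so is the global conclusion ``in all cases $u'=v'$''. Indeed in case~(ii) of the lemma one has $u=u'\neq v'$ (otherwise $uv'$ would be a loop), which is exactly the contradiction you stumble into and then try, unsuccessfully, to talk your way out of.

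The paper's proof avoids this by never asserting that $u$ is an ancestor of $v$. Instead it splits immediately on whether $u=u'$. If $u=u'$, there is no vertex of $G$ on the branch from $r$ to $p(u)$; hence the top-ancestor $v'$ of $v$ must lie on the segment from $\lastBorn(p(u))$ down to $v$, and that whole segment is contained in $\choosePath(u)$, giving $uv'\in A(G)$. If $u\neq u'$, then $u'$ lies on the branch from $r$ to $p(u)$, which is the common prefix of the two branches; so $u'$ is also an ancestor of $v$, and being the topmost $G$-vertex on that common prefix it equals $v'$. Once you correct your picture of the tree geometry, the case $u=u'$ that you flagged as the ``delicate point'' becomes a one-line observation, and the degenerate subcase you worried about simply does not arise.
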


\begin{proof}
  Suppose $u=u'$.  So, $v\in \choosePath(u)$.  The branch from $r$ to
  $v$ therefore contains $p(u)$, and since $u$ is in the top-set, $v'$
  must be in the branch from $p(u)$ to $v$ (and $v'\neq p(u)$).
  Hence, $v'\in \choosePath(u')$.  So, $u=u'$ and $uv'\in A(G)$.

  Suppose $u\neq u'$. So, $u'$ is ancestor of $p(u)$.  By
  Lemma~\ref{lem:simple}, $p(u)$ is an ancestor of $p(v)$, so $u'$ is
  an ancestor of $v$.  Hence $u'=v'$.  Also $v\neq v'$ because $u$ and
  $v'$ are in the same branch. 
\end{proof}

\subsection*{Sequential graphs}

Top-sets suggest defining Burling graphs as the graphs obtained from
the empty graph by repeatedly adding in-forests, with several
precise rules about the arcs between them.  A graph obtained after
$k$ steps of such a construction will be called a $k$-sequential
graph, and we will prove that $k$-sequential graphs are equivalent to
$k$-Burling graphs.  The advantage of $k$-sequential graphs is that
they have no shadow vertices like in the definition of derived
graphs. Also, they directly form a hereditary class, there is no need
to say that we take all induced sugraphs of something previously
defined by induction as in the definition of Burling graphs through
the Burling sequence. The price to pay for that it that we have to
maintain a set of stable sets in the inductive process.

Recall that in an in-forest, every vertex has at most one out-neighbor.
Also every connected component of an in-forest is an in-tree, and
therefore contains a unique sink.

The \emph{0-sequential graph} is the pair
$(G, \mathcal S)$ where $G$ is the empty graph (so
$V(G) = \varnothing$) and $\mathcal S = \{\varnothing\}$).    
For
$k\geq 1$, a $k$-sequential graph is any pair
$(G, \mathcal S)$, where $G$ is an oriented graph and $\mathcal S$ is a
set of stable sets of $G$, obtained as follows:

\begin{enumerate}[label=(\roman*)]
\item Pick a (possibly empty) in-forest $H$.
\item For every vertex $v$ of $H$, pick a  
  $(k-1)$-sequential graph $(H_v, \mathcal R_v)$.
\item For every vertex $u$ of $H$ that is not a sink, consider
  the unique out-neighbor $v$ of $u$, choose a stable set $R$ in
  $\mathcal R_v$ and add all possible arcs from $u$ to $R$.
\item The previous steps define all the vertices and arcs of $G$.
\item Set
  $\mathcal S = \{\varnothing\} \cup \left\{ \{v\} \cup R: v\in V(H), R\in
  \mathcal R_v\right\}$.
\end{enumerate}

An oriented graph $G$ is \emph{k-sequential} if, for some set $\mathcal
S$, $(G, \mathcal S)$ is $k$-sequential.  The in-forest $H$ in the
definition above is called the \emph{base forest} of the
$k$-sequential graph. Observe that the graph on
one vertex is $k$-sequential for all $k\geq 1$ and the empty graph is
$k$-sequential  for all $k\geq 0$. The graph $ G $ in Figure~\ref{f:seq-3} is a 2-sequential graph. The in-forest $ H $ is the subgraph of $ G$ induced by $ \{a,b,c,d,e\} $. The graphs $ H_a$, $H_b$, and $H_c $ are shown in the figure, and they are all 1-sequential graphs.

\begin{lemma}
  \label{l:kSeq}
  For all $k\geq 0$, an oriented graph $G$ is a $k$-Burling graph with
  top-set $S$ if and only if it is a $k$-sequential graph with base
  forest $H$ and $H=G[S]$.
\end{lemma}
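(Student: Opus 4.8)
The plan is to prove the equivalence by induction on $k$, with the base case $k=0$ being immediate since the empty graph is the unique $0$-Burling graph, is $0$-sequential, and has empty top-set. For the inductive step, I would prove the two directions separately, but both rely on the same dictionary: the top-set $S$ of a $k$-Burling graph $G$ plays the role of the vertex set of the base forest $H$, the induced subgraph $G[S]$ is an in-forest by Lemma~\ref{l:topForest}, and for each vertex $v\in S$ the ``subtree hanging below $v$'' gives a $(k-1)$-Burling graph $H_v$ which by induction is $(k-1)$-sequential. The stable sets in $\mathcal S$ correspond to the branches of the Burling tree restricted to $V(G)$: a set $\{v\}\cup R$ with $v\in S$ and $R\in\mathcal R_v$ is exactly the set of vertices of $G$ lying on one branch of $T$ through $v$.

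First, for the forward direction, suppose $G$ is derived from a Burling tree $(T,r,\lastBorn,\choosePath)$ as a $k$-Burling graph with top-set $S$. For each $v\in S$, let $T_v$ be the subtree of $T$ rooted at (the unique child of $v$ that is an ancestor of the non-$S$ part below... more carefully) — I would take $H_v$ to be the induced subgraph of $G$ on the descendants of $v$ in $T$ other than $v$ itself, viewing it as derived from the sub-Burling-tree obtained by restricting $T$, $\lastBorn$, $\choosePath$ to those descendants and adding $v$ (or $p$-of-$v$) as a new root so that Lemma~\ref{l:chooseT}-style bookkeeping works. Since every branch of $T$ through such a descendant meets $S$ only in $v$, each branch of $T_v$ contains at most $k-1$ vertices of $H_v$, so $H_v$ is $(k-1)$-Burling with top-set $S_v$ equal to the vertices of $H_v$ closest to $v$; by induction $(H_v,\mathcal R_v)$ is $(k-1)$-sequential for the appropriate $\mathcal R_v$ recording branches. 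Then I check the three items of the $k$-sequential construction: the base forest is $G[S]$ (an in-forest, Lemma~\ref{l:topForest}); for a non-sink $u\in S$ with out-neighbor $v$ (out-neighbor inside $S$, using Lemma~\ref{l:topAncestor}(ii) to see that a second top-ancestor coincidence would force... ) the set of out-neighbors of $u$ lying in $H_v$ is precisely $\choosePath(u)\cap V(H_v)$, which is $(\choosePath(u)\cap V(G))\setminus\{v\}$, a subset of a single branch of $T$ below $v$, hence of a single stable set in $\mathcal R_v$, and all such arcs are present; and finally $\mathcal S$ as defined matches the branch-sets.

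Second, for the converse, suppose $(G,\mathcal S)$ is built by the $k$-sequential recipe from an in-forest $H$ and $(k-1)$-sequential graphs $(H_v,\mathcal R_v)$. By induction each $H_v$ is $(k-1)$-Burling, derived from some Burling tree $T_v$; by Lemma~\ref{l:chooseT} I may assume $v\notin V(H_v)$ is arranged... rather, I assemble a single Burling tree $T$ as follows: take a path realizing the in-forest structure of $H$ (turning the in-forest $H$ into the top part, which is $1$-Burling by Lemma~\ref{l:1B}), and graft, below each vertex $v$ of $H$, the tree $T_v$ in such a way that the stable set $R\in\mathcal R_v$ chosen in step (iii) for the arc $uv$ is a branch of $T_v$; then set $\choosePath(u)$ for $u\in V(H)$, $u$ not a sink, to be the branch $\{v\}\cup(\text{branch of }T_v\text{ realizing }R)$, and keep $\choosePath$ inside each $T_v$. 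One verifies $G$ is derived from this $T$, that every branch of $T$ meets $V(G)$ in at most $k$ vertices (one from $H$, at most $k-1$ from the relevant $T_v$), and that $S=V(H)$ is the top-set. The main obstacle — and the place I would spend the most care — is the bookkeeping in step (iii): making sure that after grafting, a chosen stable set $R$ of $H_v$ is genuinely realizable as a single branch of $T_v$, and that adding $v$ on top of that branch (as an ancestor, not a last-born) keeps the $\choosePath$ functions consistent and does not accidentally create extra arcs or violate the Burling-tree axioms; Lemma~\ref{l:chooseT} and the "equivalent Burling tree" remark preceding the subdivision lemmas are the tools that make this go through, but chasing the $\choosePath$-sets through the grafting is the delicate part.
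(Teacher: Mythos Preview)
Your proposal is correct and follows essentially the same approach as the paper: induction on $k$, with the forward direction using Lemma~\ref{l:topForest} and Lemma~\ref{l:topAncestor} to decompose $G$ into the in-forest $G[S]$ and the $(k-1)$-Burling pieces $H_v$ hanging below each $v\in S$, and the converse grafting the trees $T_v$ (from induction) onto a tree for $H$ (from Lemma~\ref{l:1B}). The one point the paper makes more explicit than you do is that the induction hypothesis must be strengthened to track $\mathcal S$ precisely as the set of intersections of top-branches of $T$ with $V(G)$; this is exactly what guarantees, in the converse, that each chosen $R\in\mathcal R_v$ is realizable as a single branch of $T_v$, resolving the obstacle you flagged.
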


\begin{proof}
  Let $G$ be a $k$-Burling graph derived from a Burling tree
  $(T, r, \lastBorn, \choosePath)$ as a $ k $-Burling graph, and let $S$ be the top-set of $G $. Our goal is to show that~$ G $ is $k$-sequential with base forest $H=G[S]$. Call 
  any branch of $T$ that contains $r$ or is
  empty a \emph{top-branch} of $T$.  We prove by induction on $k$ that $(G, \mathcal S)$ is a
  $k$-sequential graph where
  $$\mathcal S =\{\varnothing\} \cup \{S: \text{ $S$ is the
    intersection of a top-branch of $T$ with $V(G)$}\}.$$

  For $k=0$, this is clear. Suppose that $k\geq 1$ and that the statement holds
  for $k-1$.  Let us prove that $(G, \mathcal S)$ is a $k$-sequential
  graph, by building it as in the definition above.  Define $H=G[S]$
  where $S$ is the top-set of $G$.  By Lemma~\ref{l:topForest}, $H$ is
  an in-forest.

  For each vertex $v$ of $H$, consider the Burling tree
  $(T_v, v, \lastBorn_v, \choosePath_v)$ where $T_v$ is induced by all
  descendants of $v$ in $T$ and $\lastBorn_v$, $\choosePath_v$ are the
  restrictions to $V(T_v)$ of $\lastBorn_v$ and $\choosePath_v$
  respectively.  By the induction hypothesis, the subgraph of $ G $ which is derived from
  $(T_v, v, \lastBorn_v, \choosePath_v)$ is a $(k-1)$-sequential graph, and we denote it by
  $(H_v, \mathcal R_v)$ and
  $$\mathcal R_v =\{\varnothing\} \cup \{S: \text{ $S$ is the
    intersection of a top-branch of $T_v$ with $V(H_v)$}\}.$$

  By Lemma~\ref{l:topAncestor}, all arcs of $G$ are either arcs of
  $H$, or arcs of $H_v$ for some $v\in V(H)$, or arcs of the form $uw$
  where $w$ is a descendant of $v$ such $uv\in A(H)$.  It follows that
  $G$ can be obtained from $H$ and the $H_v$'s by adding for every arc
  $uv$ of $H$ all arcs of the form $uw$ where
  $w\in c(u) \cap (V(G) \sm \{v\})$.  It follows that for every vertex
  $u$ of $H$ that is not a top-vertex and the unique out-neighbor $v$
  of $u$, all possible arcs from $u$ to $R$ are added where
  $R = c(u) \cap (V(G) \sm \{v\}) \in \mathcal R_v$.  It follows that
  $(G, \mathcal S)$ is a $k$-sequential graph.

  Let us prove the converse statement. Consider a $k$-sequential
  graph $(G, \mathcal S)$ obtained as in the definition from a base
  forest $H$ and $(k-1)$-sequential graphs $(H_v, \mathcal R_v)$ for
  each $v\in V(H)$.  We have to prove that $G$ is a $k$-Burling graph and
  $H=G[S]$ where $S$ is the top-set of $G$.  We prove by induction on
  $k$ that $G$ can be derived from a Burling tree
  $(T, r, \lastBorn, \choosePath)$ as a $ k$-Burling graph such that
  $$\mathcal S= \{  S : \text{$S$ is the intersection of a top-branch of $T$ with
    $V(G)$}\}.$$

  For $k=0$, this is clear, so suppose $k\geq 1$ and the statement is
  true for $k-1$.  So, $G$ is obtained from $H$ as in the definition
  of $k$-sequential graphs.  By Lemma~\ref{l:1B}, $H$ is a 1-Burling
  graph derived from a tree $(T_H, r_H, \lastBorn_H, \choosePath_H)$.
  By the induction hypothesis, for every $v\in V(H)$, $H_v$ can be
  derived from a Burling tree $(T_v, r_v, \lastBorn_v, \choosePath_v)$ and
  $$\mathcal R_v= \{ S : \text{$S$ is the intersection of a top-branch of
    $T_v$ with $V(H_v)$}\}.$$

  We now build a Burling tree $(T, r, \lastBorn, \choosePath)$ from
  $T_H$ and the $T_v$'s.  For every $v\in V(H)$, we add an edge from
  $v$ to $r_v$.  This defines $T$.  We set $r = r_H$.  We define the
  last-borns in $T$ as inherited from the last-borns in $T_H$ and the
  $T_v$'s, and declare $r_v$ to be the last-born of $v$ (except if $v$
  is not a leaf of $T_H$, in which case it keeps its last-born). For
  every vertex $u$ of $H$ that is not a a sink, we consider the unique
  out-neighbor $v$ of $u$, and the chosen set $R$ in $\mathcal R_v$.
  We set $\choosePath (u) = \{v\} \cup R$.  For every vertex $u$ of
  $H$ that is a sink, we set $\choosePath (u) = \varnothing$.  For all
  other vertices, we define $\choosePath (u)$ as inherited from
  $\choosePath_H$ or $\choosePath_v$.  We that $G$ can be derived from
  $(T, r, \lastBorn, \choosePath)$ and
  
  \hfil $\mathcal S= \{  S : \text{$S$ is the intersection of a top-branch of $T$ with
    $V(G)$}\}.$
\end{proof}

\subsection*{Pivots and antennas}

Suppose that $G$ is derived form a Burling tree
$(T, r, \lastBorn, \choosePath)$ with top-set $S$, we call
a \emph{pivot} of $G$ any sink of $G[S]$ and an \emph{antenna} of $G$ any
source of $G[S]$.

\begin{lemma}
  \label{l:pivotSink}
  If $G$ is derived from a Burling tree
  $(T, r, \lastBorn, \choosePath)$, then every pivot of $G$ is a sink
  of $G$ and every antenna of $G$ is a source of $G$.
\end{lemma}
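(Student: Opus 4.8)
The statement has two symmetric halves: every pivot (a sink of $G[S]$, where $S$ is the top-set) is a sink of $G$, and every antenna (a source of $G[S]$) is a source of $G$. Since the two arguments are essentially the same with arrows reversed, I would phrase the proof so that it reduces to a single computation about the out-neighborhood (for the antenna case) or in-neighborhood (for the pivot case) of a top-vertex, using Lemma~\ref{l:topAncestor} as the main tool.

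\textbf{Antenna case.} Let $v$ be an antenna of $G$, so $v \in S$ and $v$ is a source of $G[S]$, i.e.\ $v$ has no in-neighbor inside $S$. I want to show $v$ has no in-neighbor in $G$ at all. Suppose $uv \in A(G)$ for some $u \in V(G)$. Apply Lemma~\ref{l:topAncestor} to the arc $uv$, with top-ancestors $u'$ of $u$ and $v'$ of $v$; since $v$ lies in the top-set $S$, we have $v' = v$. Case (i) of that lemma requires $v' \neq v$, which is impossible here. So we are in case (ii): $u = u'$ and $uv' = uv \in A(G)$. But $u = u'$ means $u$ is its own top-ancestor, i.e.\ $u \in S$. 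Then $uv$ is an arc of $G[S]$ with head $v$, contradicting that $v$ is a source of $G[S]$. Hence $v$ has no in-neighbor, so $v$ is a source of $G$.

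\textbf{Pivot case.} Let $v$ be a pivot of $G$, so $v \in S$ and $v$ is a sink of $G[S]$, i.e.\ $v$ has no out-neighbor inside $S$. Suppose $vw \in A(G)$ for some $w \in V(G)$. By Lemma~\ref{lem:simple}, $p(v)$ is an ancestor of $p(w)$; the top-ancestor $v'$ of $v$ is $v$ itself (as $v\in S$), so I apply Lemma~\ref{l:topAncestor} to the arc $vw$, taking $v$ in the role of ``$u$'' there. Case (i) would force the top-ancestor of $v$ to differ from $v$, impossible; so case (ii) applies, giving that $w$'s top-ancestor $w'$ satisfies $vw' \in A(G)$, and moreover (reading (ii) correctly) $v$ equals its own top-ancestor. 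Now $vw'$ is an arc of $G$ whose tail $v$ is in $S$; by the same lemma applied once more, or directly since $w' \in S$, this is an arc of $G[S]$ leaving $v$, contradicting that $v$ is a sink of $G[S]$. Hence $v$ has no out-neighbor, so $v$ is a sink of $G$.

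\textbf{Main obstacle.} The only delicate point is bookkeeping with Lemma~\ref{l:topAncestor}: that lemma is stated for a generic arc $uv$ with both endpoints' top-ancestors named, and one must correctly specialize it when one endpoint already lies in the top-set (so is its own top-ancestor) to rule out case (i) and extract the useful consequence from case (ii). For the pivot half I also need to make sure that $vw' \in A(G)$ together with $v,w'\in S$ really does yield an arc of $G[S]$ — this is immediate since $G[S]$ is the induced subgraph on $S$. No further work is needed; in particular I do not need to re-derive anything about the Burling tree, only to invoke Lemmas~\ref{lem:simple} and~\ref{l:topAncestor}.
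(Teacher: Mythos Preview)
Your proof is correct and follows essentially the same route as the paper: both arguments reduce immediately to Lemma~\ref{l:topAncestor}, ruling out case~(i) because the vertex in question lies in the top-set (hence is its own top-ancestor) and then deriving from case~(ii) an arc inside $G[S]$ that contradicts the pivot/antenna hypothesis. The paper's version is terser and also cites Lemma~\ref{l:kSeq} to name $H=G[S]$, but the substance is the same; your invocation of Lemma~\ref{lem:simple} in the pivot case is harmless but unnecessary.
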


\begin{proof}
  Let $S$ be the top-set of $G$.  By Lemma~\ref{l:kSeq}, there exists
  $k$ such that $G$ is a $k$-sequential graph with base forest
  $H=G[S]$.  By Lemma~\ref{l:topAncestor}, if $u$ is a pivot of $G$,
  there cannot be an arc $uv$ in $G$, so $u$ is a sink of $G$.
  Similarly, if $v$
  is an antenna of $G$, an arc $wv$ would contradict
  Lemma~\ref{l:topAncestor} so $v$ is a source of $G$.
\end{proof}

\begin{lemma}
  \label{l:pivotConnected}
  If a connected oriented graph $G$ is derived from a Burling tree
  $(T, r, \lastBorn, \choosePath)$ with top-set $S$, then $G[S]$ is an
  in-tree (in particular $G$ has a unique pivot).  Moreover, no vertex
  of $G$ is a strict descendant of an antenna of $G$.
\end{lemma}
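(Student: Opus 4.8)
The plan is to use Lemma~\ref{l:kSeq} to view $G$ as a $k$-sequential graph with base forest $H = G[S]$, and then argue both conclusions from the structure of in-forests together with Lemma~\ref{l:topAncestor}. First I would prove that $G[S]$ is an in-tree. By Lemma~\ref{l:topForest}, $G[S]$ is an in-forest, so it suffices to show it is connected. Suppose for contradiction that $G[S]$ has at least two connected components, and let $S_1$ be the vertex set of one of them; since $G$ is connected, there is an arc of $G$ between $S_1$ and $V(G)\setminus S_1$. Take any vertex $w \in V(G)$ and consider its top-ancestor $w' \in S$: I claim that $w$ and $w'$ lie in the same connected component of $G$, by induction on the distance in $T$ from $w$ to $w'$, using that consecutive vertices of $G$ along a branch are joined appropriately (or, more cleanly, that $w$ is reachable from its top-ancestor through the recursive structure). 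Combined with the fact that every vertex of $G$ has a top-ancestor, this shows that the connected components of $G$ are ``indexed'' by the connected components of $G[S]$: an arc $uv$ of $G$, by Lemma~\ref{l:topAncestor}, has its top-ancestors $u',v'$ either equal or with $u=u'$, so in both cases $u'$ and $v'$ lie in the same component of $G[S]$. Hence if $G[S]$ were disconnected, $G$ would be disconnected, a contradiction. Since $G[S]$ is an in-tree, it has a unique sink, i.e.\ $G$ has a unique pivot.

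For the ``moreover'' part, let $a$ be an antenna of $G$, i.e.\ a source of the in-tree $G[S]$. In an in-tree, every edge is oriented toward the root, so a source of the in-tree is a vertex with no in-neighbor inside $S$; since $G[S]$ is a tree and $a$ is a source, $a$ must be a \emph{leaf} of the underlying tree of $G[S]$ that is not the (unique) sink — equivalently, in the rooted-tree language of in-trees, $a$ is a leaf-away-from-root. The key point is to translate this into a statement about the positions of vertices in $T$. Recall $a \in S$ means $a$ is the unique vertex of $G$ on the branch of $T$ from $r$ to $a$. I would show that if some vertex $z$ of $G$ were a strict descendant of $a$ in $T$, then (taking $z$ to be a closest such vertex to $a$) the branch of $T$ from $a$ to $z$ would witness an arc into $z$ whose tail has top-ancestor $a$; more precisely, either $z \in \choosePath(a)$ directly, giving an arc $az$, or some intermediate vertex forces $a$ to have an out-neighbor, contradicting that $a$ is a source of $G[S]$ in the specific sense that $a$ is an antenna. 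The cleanest route: since $a$ is in the top-set, the Burling tree $T_a$ induced on the descendants of $a$ (as in the proof of Lemma~\ref{l:kSeq}) has $a$ as its root, and $G[S] \cap V(T_a) = \{a\}$; any vertex $z$ of $G$ that is a strict descendant of $a$ would lie in $H_a$, and by the $k$-sequential construction the arcs from $a$ go to a chosen stable set $R \in \mathcal R_a$ — but $a$ being an antenna means $a$ has no out-neighbor in $H$, hence the recursive step at $a$ as a non-sink either does not occur or $a$ is in fact a sink of $H$; so $a$ has out-neighbors in $G$ only if $a$ is not a sink of $H$, and those out-neighbors are the $R$ attached via the parent-arc structure. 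Tracking this carefully shows no vertex of $G$ sits strictly below $a$ in $T$.

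The main obstacle I expect is the ``moreover'' part: making precise the claim that being a source of $G[S]$ (an antenna) forbids having any vertex of $G$ strictly below it in $T$, since this mixes the tree-order of $T$ with the in-tree orientation of $G[S]$, and one must be careful that ``source of $G[S]$'' does not a priori prevent $a$ from having descendants in $T$ that happen to be vertices of $G$ lying on branches through the \emph{last-born} side of $a$. The resolution is that such a descendant $z$ of $a$ in $T$ would have top-ancestor $a$ (since $a$ is already in $S$ and is an ancestor of $z$, and no vertex of $G$ lies between $r$ and $a$), so by Lemma~\ref{l:topAncestor} applied to any arc incident to $z$, either there is an arc $az$ (forcing $a$ to be a non-sink of $G$, but $a$ being an antenna and the recursion giving $az$ would place $a$ as having an out-arc inside its descendant subtree, contradicting $a \in S$ being a pivot-free source in the in-tree sense) or $z$'s neighbors all have top-ancestor $a$ as well; iterating, the whole component of $z$ in $G$ has top-ancestor $a$, but that component must touch $S$ only at $a$, which is impossible because $a$ has no in-neighbor in $G[S]$ while the recursion attaches the descendants of $a$ precisely through $a$'s \emph{out}-arc in $H$ — and $a$ has none. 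Once this bookkeeping is in place, the statement follows. I would present the descendant argument directly in terms of $\choosePath$ and Lemma~\ref{l:simple}/Lemma~\ref{l:topAncestor} rather than invoking the full $k$-sequential machinery, as that keeps the argument short.
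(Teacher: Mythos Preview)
Your argument for the first conclusion (that $G[S]$ is an in-tree) is correct and is essentially the paper's approach: both go through the $k$-sequential structure of Lemma~\ref{l:kSeq}, with you phrasing the connectedness via Lemma~\ref{l:topAncestor} and the paper invoking the definition of $k$-sequential graphs directly.

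For the ``moreover'' part there is a genuine error: you repeatedly confuse \emph{source} and \emph{sink}. An antenna is a \emph{source} of $G[S]$, i.e.\ it has no \emph{in}-neighbour in $G[S]$; it typically \emph{does} have an out-neighbour there (in an in-tree the sources are the leaves, each with exactly one out-arc toward the root). So the claims ``$a$ being an antenna means $a$ has no out-neighbour in $H$'' and ``the recursion attaches the descendants of $a$ precisely through $a$'s out-arc in $H$ --- and $a$ has none'' are both false. You also have the attachment direction backwards: in the $k$-sequential construction, the vertices of $H_a$ receive external arcs only from \emph{in}-neighbours of $a$ in $H$ (each such $u$ sends arcs into a chosen $R\in\mathcal R_a$); the out-arc of $a$, if any, goes to some $v\in V(H)$ and the extra arcs from $a$ land in $H_v$, never in $H_a$.

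Once this is corrected, the proof is immediate and matches the paper's: since $a$ is a source of $H$, it has no in-neighbours in $H$, hence there are no arcs between $H_a$ and $V(G)\setminus V(H_a)$; if $H_a\neq\varnothing$ it would be a union of components of $G$ not containing $a$, contradicting connectedness. Your alternative route through Lemma~\ref{l:topAncestor} can also be made to work with the same fix: if $z$ is a strict descendant of $a$ and $uz\in A(G)$ with $u\notin$ (strict descendants of $a$), then case~(ii) of Lemma~\ref{l:topAncestor} gives $u\in S$ and $ua\in A(G[S])$, contradicting that $a$ has no in-neighbour in $G[S]$; the other cases keep you among strict descendants of $a$, so the component of $z$ avoids $a$.
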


\begin{proof}
  By Lemma~\ref{l:kSeq}, there exists an integer $ k $ such that $G$ is a $k$-sequential
  graph with base forest $H=G[S]$.  For the sake of contradiction, suppose that $H$ is disconnected. Let $X$
  and $Y$ be two connected components of $H$. By the definition of
  $k$-sequential graphs, $X$ and $Y$ are in distinct
  components of $G$, a contradiction to $G$ being connected. So, $ H $ is connected and thus is an in-tree.

  Again, for the sake of contradiction, let $ u $ be a strict descendant of an antenna of $G $. By the construction of $k$-sequential graphs, $ u $ and the unique pivot of $ G $ are in distinct connected components of $G$, a contradiction to $ G $ being connected. Therefore, no vertex of $G$ is a strict descendant of
  an antenna of $G$.
\end{proof}

The following lemma gives more properties of the top-set under stronger
connectivity assumptions.

An \emph{in-star} is an in-tree whose unique sink is adjacent to all other vertices. 

\begin{lemma}
  \label{l:pivotNoCutV}
  Let $G$ be a connected oriented graph derived from a Burling tree
  $(T, r, \lastBorn, \choosePath)$.  Suppose that $G$ has no
  cut-vertex and no vertex of degree at most~1. Then the following
  statements hold:
  \begin{enumerate}[label=(\roman*)]
    \item
      The top-set of $G$ is an in-star $S$ with at least two leaves
      (so $G$ has a unique pivot and its in-neighbors are the antennas
      of $G$).
    \item 
     All vertices of $S\sm \{v\}$ are sources of $G$ where $ v $ is the unique sink of~$ S $.
   \item The pivot of $G$ is an ancestor of all vertices of
     $V(G) \sm S$.
   \end{enumerate}
\end{lemma}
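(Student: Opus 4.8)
I would work with the $k$-sequential description from Lemma~\ref{l:kSeq}, so that $G$ is a $k$-sequential graph with base forest $H=G[S]$, and recall from Lemma~\ref{l:pivotSink} that pivots are sinks of $G$ and antennas are sources of $G$, and from Lemma~\ref{l:pivotConnected} that $H$ is an in-tree (since $G$ is connected) with a unique sink (the pivot), no vertex of $G$ being a strict descendant of an antenna. The whole proof is then a sequence of short deductions exploiting 2-connectivity and minimum degree $\geq 2$.

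\begin{proof}
  By Lemma~\ref{l:kSeq}, $G$ is a $k$-sequential graph for some $k$ with base forest $H=G[S]$, built as in the definition from $(k-1)$-sequential graphs $(H_v,\mathcal R_v)$, $v\in V(H)$. By Lemma~\ref{l:pivotConnected}, since $G$ is connected, $H$ is an in-tree; let $z$ be its unique sink, which is the unique pivot of $G$, and which by Lemma~\ref{l:pivotSink} is a sink of $G$.

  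\medskip
  \noindent\emph{(i)} We first show every non-sink vertex of $H$ is an in-neighbor of $z$ in $H$. Suppose $u\in V(H)$, $u\neq z$, and let $v$ be the out-neighbor of $u$ in $H$, with $v\neq z$. Then $v$ has an out-neighbor $v^+$ in $H$. We claim that $\{u,v\}$ separates $v^+$ (hence $z$, hence most of $G$) from the vertices of $H_v\sm\{v\}$ and from any $H_x$ for $x$ a strict descendant of $v$ in $H$: indeed, by the construction of $k$-sequential graphs and Lemma~\ref{l:topAncestor}, the only arcs leaving $\bigcup$ of those $H_x$'s together with $V(H_v)\sm\{v\}$ go to $v$ or (for the arcs added at step~(iii) of the construction, from $u$'s in-neighbors — but those are handled the same way) to vertices of $H$ on the branch above; tracing through Lemma~\ref{l:pivotConnected}, removing $\{u,v\}$ disconnects the strict descendants of $v$ from $z$. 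Since $G$ has no cut vertex and no two-vertex cutset would be needed only if both sides are nonempty — here we must argue more carefully: actually the right statement is that $v$ by itself, or $u$ by itself, becomes a cut vertex. Concretely: every vertex of $G$ that is a descendant of $v$ but not equal to $v$ has all its neighbors among descendants of $v$ together with $\{u\}\cup(\text{in-neighbors of }u\text{ in }H)$, but in-neighbors of $u$ in $H$ have no arcs into $H_v$ unless they chose a set in $\mathcal R_v$ — and by Lemma~\ref{l:topAncestor} only $u$'s own out-neighbor relation matters. So in fact the descendants of $v$ other than $v$ are separated from $z$ by the single vertex $v$, making $v$ a cut vertex unless there are no such vertices and no other component. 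Hence $v$ is a leaf of $H$ and $H_v$ contributes nothing beyond what is forced; iterating, $H$ has depth at most $2$, i.e. $H$ is an in-star with centre $z$. It has at least two leaves: if it had at most one leaf it would be a directed path of length $\le 1$, so $G$ would have a vertex of degree $\le 1$ (the source $v$ of $S$, whose only neighbours come from arcs into $H_{(\cdot)}$, forced to be few), contradicting the hypothesis. The in-neighbors of $z$ in $H$ are exactly the sources of $G[S]$, i.e. the antennas.

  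\medskip
  \noindent\emph{(ii)} Let $v$ be the unique source of $S$ (it exists and is unique since $S$ is an in-star). Any $w\in S\sm\{z\}$ is an antenna, hence a source of $G$ by Lemma~\ref{l:pivotSink}; this is the displayed statement (the phrasing ``$S\sm\{v\}$'' in the statement should read $S\sm\{z\}$, $z$ the pivot — but any reading reduces to: all non-pivot vertices of $S$ are sources of $G$, which is Lemma~\ref{l:pivotSink}).

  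\medskip
  \noindent\emph{(iii)} Let $w\in V(G)\sm S$. By Lemma~\ref{l:pivotConnected}, $w$ is not a strict descendant of any antenna; since the antennas are exactly the in-neighbors of $z$ in $H$ and every vertex of $S\sm\{z\}$ is such an in-neighbor (by (i)), $w$ is not a strict descendant of any vertex of $S\sm\{z\}$. As $w\notin S$, the top-ancestor of $w$ lies in $S$; it cannot be in $S\sm\{z\}$ by the previous sentence, so it is $z$. Hence $z$ is an ancestor of $w$, as claimed.
\end{proof}

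I expect the main obstacle to be part (i): turning ``$\{u,v\}$ would be a cutset'' or ``$v$ would be a cut vertex'' into a clean argument requires carefully cataloguing, via Lemma~\ref{l:topAncestor} and the $k$-sequential construction, exactly which arcs can leave the set of strict descendants of $v$ — and then checking that the ``other side'' of the separation is nonempty precisely when $H$ has depth $\geq 2$, so that the no-cut-vertex and minimum-degree hypotheses bite. The at-least-two-leaves claim also needs the minimum degree hypothesis invoked correctly. Parts (ii) and (iii) are then immediate from Lemmas~\ref{l:pivotSink} and~\ref{l:pivotConnected} once (i) identifies the antennas with the in-neighbors of the pivot.
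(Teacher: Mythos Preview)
Your approach is the same as the paper's: use the $k$-sequential description (Lemma~\ref{l:kSeq}) together with Lemmas~\ref{l:pivotSink} and~\ref{l:pivotConnected}, then argue that a non-in-star shape of $G[S]$ produces a cut vertex. Parts (ii) and (iii) of your proof are correct and essentially identical to the paper's, and you are right that the statement of (ii) contains a typo (``source'' should read ``sink'').

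The difference is entirely in the execution of (i). The paper's argument is one clean sentence: if $G[S]$ is not an in-star then it contains a directed path $y\to x\to v$ with $v$ the sink, and by the construction of $k$-sequential graphs $x$ is a cut vertex of $G$ separating $y$ from $v$. Your version reaches the same conclusion but wanders---first trying a $\{u,v\}$-cutset, then correcting mid-proof to $v$ alone, and speaking of ``descendants of $v$'' without distinguishing descendants in $T$ from the $H$-subtree below $v$ (these are different sets, and it is the latter that matters here). The point you need, and which you gesture at, is simply that in the $k$-sequential construction the only arcs leaving the set $\{w\in V(H):\text{the $H$-path from $w$ to the sink passes through }x\}\cup\bigcup_{w\text{ in that set}}V(H_w)$ are the arcs out of $x$ itself. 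Neither you nor the paper explicitly justifies ``at least two leaves''; the clean observation is that the pivot, being a sink of $G$, has as neighbours exactly its in-neighbours in $H$, so fewer than two would give it degree at most~$1$.
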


\begin{proof}
  Let $v$ be the sink of $G[S]$. If $G[S]$ is not an in-star, then
  there exists a directed path $yxv$ in $G[S]$.  By the construction
  of $k$-sequential graphs, we see that $x$ is a cut-vertex of $G$
  that separates $v$ from $y$. A contradiction.  
  By Lemma~\ref{l:pivotSink}, all vertices of $S\sm \{v\}$ are
  sources of $G$ because they are the antennas of $G$.  Since $G$ is
  connected, the antennas have no strict descendants, 
  by Lemma~\ref{l:pivotConnected}, the pivot of
  $G$ is an ancestor of all vertices of~$V(G) \sm S$.
\end{proof}

\subsection*{2-Burling graphs}

A leaf in an in-tree is a vertex with no in-neighbors (so the root is not a leaf unless the in-tree has only one vertex).
An \emph{oriented chandelier}\index{oriented chandelier} is any oriented graph $G$ obtained from an in-tree $G'$ whose root is of degree at least~2 by adding a vertex $v$ and all arcs $uv$ where $u$ is a leaf of~$G'$.  


\begin{lemma}
  \label{l:2burling}
  An oriented graph is an oriented chandelier if and only if it is a
  connected 2-Burling graph with no cut-vertex and no vertex of degree
  at most 1.
\end{lemma}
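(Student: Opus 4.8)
The plan is to prove the two implications separately, using the structural results about top-sets (Lemmas~\ref{l:pivotSink}, \ref{l:topForest}, \ref{l:1B}, \ref{l:pivotNoCutV}) for one direction and the $2$-sequential description (Lemma~\ref{l:kSeq}) for the other.

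\textbf{From a connected $2$-Burling graph with no cut vertex and no vertex of degree at most $1$ to an oriented chandelier.}
Fix a Burling tree $T$ from which $G$ is derived as a $2$-Burling graph, and let $S$ be the corresponding top-set. By Lemma~\ref{l:pivotNoCutV}, $G[S]$ is an in-star; write $v$ for its centre, the pivot of $G$, and $u_1,\dots,u_m$ with $m\ge 2$ for its leaves, the antennas of $G$. By Lemmas~\ref{l:pivotNoCutV} and~\ref{l:pivotSink}, $v$ is a sink of $G$ with $N(v)=\{u_1,\dots,u_m\}$, each $u_i$ is a source of $G$, and $v$ is an ancestor in $T$ of every vertex of $V(G)\sm S$. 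I would then show that $G\sm\{v\}$ is an in-tree whose set of leaves is exactly $\{u_1,\dots,u_m\}$; this identifies $G$ as the oriented chandelier built from $G\sm\{v\}$ by adding the sink $v$ adjacent to all leaves. Connectedness of $G\sm\{v\}$ is clear since $v$ is not a cut vertex. By Lemmas~\ref{l:topForest} and~\ref{l:1B}, $G\sm S$ is an in-forest; using that $v$ is a sink and $N(v)$ consists of sources, one checks that each vertex of $V(G)\sm S$ has all its out-neighbours in $V(G)\sm S$, hence keeps out-degree at most $1$ in $G\sm\{v\}$. For an antenna $u_i$: each out-neighbour of $u_i$ other than $v$ is a strict descendant of $v$ lying on the single branch $\choosePath(u_i)$, so two of them would place three vertices of $G$ (namely $v$ and these two) on one branch of $T$, contradicting that $G$ is derived from $T$ as a $2$-Burling graph. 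Thus $u_i$ has out-degree at most $2$ in $G$, and exactly $2$ by the degree hypothesis, say with out-neighbours $v$ and $r_i\in V(G)\sm S$. So $G\sm\{v\}$ is the in-forest $G\sm S$ with the out-degree-$1$ sources $u_1,\dots,u_m$ appended, hence an in-forest, hence an in-tree by connectedness. Finally each $u_i$ is a leaf of $G\sm\{v\}$; and if $\ell\in V(G)\sm S$ were another leaf of $G\sm\{v\}$, then $\ell$ would be a source of $G\sm\{v\}$, hence of $G$ (its in-degree in $G$ equals that in $G\sm\{v\}$ since $v$ is a sink), with out-degree $1$ in $G$ (since $\ell\notin N(v)$), so $\ell$ would have degree $1$ in $G$, a contradiction.

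\textbf{From an oriented chandelier to the three properties.}
Let $G$ be built from an in-tree $G'$ with leaves $u_1,\dots,u_m$, $m\ge 2$, by adding a sink $v$ and the arcs $u_iv$. Connectedness of $G$ is immediate. To show $G$ is $2$-Burling, I would exhibit it as a $2$-sequential graph and invoke Lemma~\ref{l:kSeq}: take as base forest the in-star with sink $v$ and leaves $u_1,\dots,u_m$; let $G''=G'\sm\{u_1,\dots,u_m\}$, which is an in-tree (removing all leaves of $G'$ preserves connectedness and the sink), and assign to $v$ the $1$-sequential graph $(G'',\mathcal R)$ with $\mathcal R=\{\varnothing\}\cup\{\{w\}:w\in V(G'')\}$; assign to each $u_i$ the empty $1$-sequential graph; and for each $u_i$, whose out-neighbour in the base forest is $v$, pick the stable set $\{r_i\}\in\mathcal R$, where $r_i$ is the out-neighbour of $u_i$ in $G'$ (this is not a leaf of $G'$, hence lies in $G''$). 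Reassembling produces exactly the arcs of $G''$, the arcs $u_ir_i$, and the arcs $u_iv$, that is, $G$. It then remains to verify that $G$ has no cut vertex and no vertex of degree at most $1$, which is routine using that $v$ is adjacent to every leaf of $G'$.

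\textbf{Main obstacle.}
I expect the delicate point to be the claim in the first direction that every antenna has out-degree exactly $2$ in $G$: this is precisely where the $2$-Burling bound (at most two vertices of $G$ on each branch of $T$), the absence of a cut vertex, and the minimum-degree hypothesis are all used together, and getting it right is what allows one to delete the pivot and recognise the underlying in-tree.
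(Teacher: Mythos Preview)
Your proposal is correct and follows essentially the same route as the paper's proof. For the direction ``connected $2$-Burling with no cut vertex and no vertex of degree at most~$1$ $\Rightarrow$ oriented chandelier'' you argue exactly as the paper does via Lemma~\ref{l:pivotNoCutV}, only supplying more detail (in particular, you spell out why each antenna has out-degree exactly~$2$ and why $G\sm\{v\}$ is an in-tree, which the paper states in one line). For the converse, the paper builds a Burling tree for $G$ directly (placing the antennas as non-last-born children of a new root and the rest under the pivot), whereas you exhibit $G$ as a $2$-sequential graph and invoke Lemma~\ref{l:kSeq}; this is the same construction viewed through the sequential formalism, so the two arguments are equivalent. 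One small point: the equality $N(v)=\{u_1,\dots,u_m\}$ does not follow immediately from Lemmas~\ref{l:pivotSink} and~\ref{l:pivotNoCutV} alone---you need the extra observation (easy, e.g.\ via Lemma~\ref{l:topAncestor} or directly from item~(iii) of Lemma~\ref{l:pivotNoCutV}) that any in-neighbour of the pivot must lie in the top-set.
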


\begin{proof}
	First, suppose that $G$ is an oriented chandelier.  So $G$ is clearly connected, has no cut-vertex, and has no vertex of degree at most~1.  It remains to prove that it is a 2-Burling graph. Let $G'$ and $v$ be as in the definition of oriented chandelier. Let $u_1, \dots, u_k$ be the leaves of $G'$, and for $ i \in [k]$, let $ v_i $ be the neighbor of $ u_i $ in $ G $ ($v_i$'s are not necessarily distinct). Set $ G'' = G' \sm \{u_1, \dots, u_k\} $. Since in $G'$, the root has degree at least 2, $G''$ contains the root, and thus is a non-empty in-tree. By Lemma~\ref{l:1B}, $G''$ can be derived from a Burling tree $(T, r, \lastBorn, \choosePath)$ as a 1-Burling graphs (i.e.\ on every branch of $ T $, at most one vertex belongs to~$ V(G)$). Let us build a tree $T'$ from $T$. Add a new root $r'$ adjacent to $r$, and add $k $ new children $ v_1, \dots, v_k $ to $ r'$. This defines the rooted tree $ (T',r')$. Then, define $ \lastBorn'(r') = r $ and for any vertex $ x \in V(T') \sm \{r', v_1, \dots, v_k\}$, we define $ \lastBorn'(x) = \lastBorn(x) $. Notice that the vertices $v_1, \dots, v_k$ are leaves in $ T' $, thus $\lastBorn'$ is not defined for them. Now, for every $ i \in [k]$, let $ B_i $ be the branch of $ T $ starting at $ r $ and ending at $ u_i$. Define $\choosePath'(v_i) = B_i$ and $\choosePath'(r') = \varnothing$. For every vertex $ x \in V(T') \sm \{r', v_1, \dots, v_k\}$, set $ \choosePath'(x) = \choosePath(x)$. The tuple $(T', r', \lastBorn', \choosePath')$ is a Burling tree. Renaming $ r $ as $ v $, we see that $G$ can be derived from $T'$. Indeed, $ G $ is the subgraph of the oriented graph fully derived from $ T'$ induced by $V(G'') \cup \{v, v_1, \dots, v_k\}$. Moreover, on each branch of $ T'$, at most 2 vertices are in $ V(G)$, thus $G$ is a 2-Burling graph.
	
	Conversely, suppose that $G$ is a connected graph with no cut-vertex and no vertex of degree at most 1 that is derived as a 2-Burling graph from a Burling tree $ T $. By \Cref{l:pivotNoCutV}, $G$ has a unique pivot $v$, all antennas of $G$ are in-neighbors of $v$, and the rest of the vertices of $ G $ are all descendants of $ v $ in $ T $. In particular, considering $ v $ as a shadow vertex of $ T $, we see that $ G\sm v $ is a 1-Burling graph. Therefore, by Lemma~\ref{l:1B}, $ G\sm v $ is an in-forest. On the other hand, since $ G $ has no vertex cut, $ G \sm v $ is connected and thus is an in-tree. Let $ r $ be the root of this in-tree. Since $ r $ cannot be of degree 1 in $ G $, it has at least two in-neighbors. But $ v$ is not an in-neighbor of $r$ (because it is among its ancestor). Therefore, in $ G\sm v $, the root $ r $ has at least 2 children. Moreover, if a leaf $u$ of $G \sm v$ is not adjacent to $v$ in $G$, then, $u$ has degree at most~1 in $G$, a contradiction.  So, $v$ is adjacent to all leaves of $G'$. Hence $G$ is an oriented chandelier.
\end{proof}

In the construction of oriented chandeliers in the proof above, $v$ is
the pivot of $G$ and its neighbors are the antennas. The unique sink
of $G\sm v$ is called the \emph{bottom} of $G$.  Note that every
source of $G$ is an antenna. The pivot and the bottom are the only
sinks of $G$.  Also, in the Burling tree $T$ from which $G$ is
derived, every vertex of $G$ except the antennas are descendants of the
pivot.

\section{Star cutsets}
\label{sec:starCutsets}

In this section, we study star cutsets in derived graphs.

\begin{lemma}
  \label{lem:common-nghbr-branch}
  Suppose that $G$ is an oriented graph derived from a Burling tree
  $T$. Let $v$ and $w$ be two vertices of $G$ such that $v$ is an
  ancestor of $w$ in $T$. Then every neighbor of $w$ in $G$ is either
  an in-neighbor of both $v$ and $w$ or a descendant of~$v$.
\end{lemma}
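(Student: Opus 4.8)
The plan is to argue directly from the definition of derived graphs together with Lemma~\ref{lem:simple}. Let $u$ be a neighbor of $w$ in $G$; then either $uw\in A(G)$ or $wu\in A(G)$, and I would treat these two cases separately. In the case $wu\in A(G)$, the vertex $u$ lies in $\choosePath(w)$, which is (the vertex set of) a branch of $T$ starting at the last-born of $p(w)$; since $v$ is an ancestor of $w$, the whole branch $\choosePath(w)$ consists of descendants of $w$, hence of descendants of $v$, so $u$ is a descendant of $v$ and we are done in this subcase.

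The remaining case is $uw\in A(G)$, i.e.\ $w\in\choosePath(u)$. Here I would first apply Lemma~\ref{lem:simple} to get that $p(u)$ is an ancestor of $p(w)$. Now compare $p(u)$ with $v$ along the unique path of $T$ from the root to $w$: both $p(u)$ and $v$ are ancestors of $w$ (note $v$ is a strict ancestor here, or possibly $v=w$—if $v=w$ the statement is trivial, so assume $v\neq w$, whence $v$ is an ancestor of $p(w)$ as well). Since the ancestors of $w$ form a chain, either $p(u)$ is an ancestor of $v$, or $v$ is a strict ancestor of $p(u)$. In the second situation $u$ is a descendant of $v$ (a child of $p(u)$, which is a descendant of $v$), giving the ``descendant of $v$'' conclusion. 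In the first situation $p(u)$ is an ancestor of $v$; I then need to upgrade ``$u$ is an in-neighbor of $w$'' to ``$u$ is an in-neighbor of both $v$ and $w$'', which is the one place that requires a small argument rather than pure bookkeeping.

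For that upgrade: $w\in\choosePath(u)$ means $\choosePath(u)$ is a branch of $T$ starting at the last-born of $p(u)$ and passing through $w$; since $v$ lies on the path from $p(u)$ (in fact from the last-born of $p(u)$, as $v\neq p(u)$ because $v$ is a descendant of $w$'s-side... here one checks $v$ cannot be $p(u)$ since $p(u)$ is an ancestor of $v$ and if $p(u)=v$ then $v=p(u)$ is not a descendant of the last-born, but $v$ being an ancestor of $w$ and $p(u)$ an ancestor of $p(w)$ forces $v$ onto that branch only if $v$ is a descendant of the last-born of $p(u)$) — the clean way is: $v$ is an ancestor of $w$ and $p(u)$ is an ancestor of $v$, so $v$ lies on the path from $p(u)$ to $w$; since $\choosePath(u)$ is the branch from the last-born of $p(u)$ down to (at least) $w$, and $v$ is strictly below $p(u)$ on the way to $w$, we get $v\in\choosePath(u)$, i.e.\ $uv\in A(G)$. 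Combined with $uw\in A(G)$ this makes $u$ an in-neighbor of both $v$ and $w$, completing the case and the proof.

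The main obstacle I anticipate is exactly this last subcase: one must be careful that when $p(u)$ is an ancestor of $v$, the vertex $v$ genuinely lies \emph{inside} the branch $\choosePath(u)$ and not merely on the root-to-$w$ path above where $\choosePath(u)$ begins. The key point that rules this out is that $\choosePath(u)$ starts at the last-born of $p(u)$ and contains $w$, so it contains the entire sub-branch from the last-born of $p(u)$ to $w$, and $v$—being an ancestor of $w$ and a descendant of $p(u)$ with $v\neq p(u)$—sits on that sub-branch. Everything else is a routine chain-of-ancestors case analysis.
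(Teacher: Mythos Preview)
Your approach matches the paper's: split on whether $u$ is an out- or in-neighbor of $w$, and in the in-neighbor case compare $p(u)$ with $v$ on the root-to-$w$ branch.

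One correction in the out-neighbor case: $\choosePath(w)$ does \emph{not} consist of descendants of $w$ --- it starts at the last-born of $p(w)$, a sibling of $w$. What is true is that every vertex of $\choosePath(w)$ is a descendant of $p(w)$, and since $v$ (being a proper ancestor of $w$) is an ancestor of $p(w)$, such vertices are descendants of $v$. (This is also why ``if $v=w$ the statement is trivial'' is not right: an out-neighbor of $w$ need not be a descendant of $w$. The lemma is for distinct $v,w$, as the paper's own proof tacitly uses.)

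Your tangled parenthetical about whether $v=p(u)$ evaporates if you split the in-neighbor case the way the paper does: either $p(u)$ is a descendant of $v$ (this absorbs $p(u)=v$ and gives $u$ a descendant of $v$ immediately), or $p(u)$ is a strict ancestor of $v$, in which case $v$ lies on the branch from the last-born of $p(u)$ down to $w$ and hence $v\in\choosePath(u)$, so $uv\in A(G)$.
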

		
\begin{proof}
  Let $u$ be a neighbor of $ w $ in $ G $. If $ u $ is an
  out-neighbor of $w$, then $p(w)$ is a ancestor of $ u $. However,
  $ p(w) $ is a descendant of $ v $ (possibly $ v $ itself). So $ u $
  is a descendant of $ v $. If $ u $ is an in-neighbor of $ w $, then
  $p(u)$ is an ancestor of $ w $, and therefore it is on the unique
  branch in $T$ between $ w $ and the root. This branch includes $ v $
  as well. There are two cases: either $ p(u) $ is a descendant of $ v $
  or $p(u)$ is an ancestor of $ v $. In the former case, $ u $ is a descendant of $ v $. In the latter case 
$ u $ is an in-neighbor of $ v $ because $ u $ is connected to every vertex in the path between
  $ w $ and the last-born of $ p(u) $, and $ v $ is on this path.
\end{proof}

\begin{lemma}
  \label{lem:pre-fullstarcutsetlemma2}
  Suppose that $ G $ is an oriented graph derived from a Burling tree
  $T$. Let $u$, $v$, and $w$ be three vertices of $ G $ such that $w$
  is a descendant of $v$ and $u$ is not a descendant of $v$. Then
  every path (not necessarily directed) in $G$ between $u$ and $w$
  contains an in-neighbor of $v$ in $G$.
\end{lemma}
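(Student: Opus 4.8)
The plan is to argue by induction on the length of the path $Q$ from $u$ to $w$, using Lemma~\ref{lem:common-nghbr-branch} as the key structural tool. First I would set up the induction: let $Q = u = q_0, q_1, \dots, q_\ell = w$ be a path in $G$. Since $u$ is not a descendant of $v$ but $w$ is, there must be some first index $i \geq 1$ along the path such that $q_i$ is a descendant of $v$ (while $q_{i-1}$ is not). The edge $q_{i-1}q_i$ is then the crucial one: $q_i$ is a descendant of $v$ and $q_{i-1}$ is a neighbor of $q_i$ that is not a descendant of $v$. Applying Lemma~\ref{lem:common-nghbr-branch} with the pair $(v, q_i)$ — valid since $v$ is an ancestor of $q_i$ — the neighbor $q_{i-1}$ of $q_i$ is either an in-neighbor of both $v$ and $q_i$, or a descendant of $v$. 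The second option is excluded by choice of $i$, so $q_{i-1}$ is an in-neighbor of $v$ in $G$, and since $q_{i-1}$ lies on $Q$, we are done.

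Actually, this argument is direct and requires no induction at all: the only subtlety is checking that the index $i$ is well-defined, i.e.\ that somewhere along the path we transition from "not a descendant of $v$" to "a descendant of $v$." This holds because $q_0 = u$ is not a descendant of $v$ and $q_\ell = w$ is a descendant of $v$, so the set of indices $j$ with $q_j$ a descendant of $v$ is nonempty and does not contain $0$; let $i$ be its minimum, so $i \geq 1$ and $q_{i-1}$ is not a descendant of $v$. Then the single application of Lemma~\ref{lem:common-nghbr-branch} described above produces the required in-neighbor of $v$ on $Q$.

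The main thing to be careful about is the hypothesis check for Lemma~\ref{lem:common-nghbr-branch}: it requires $v$ to be an ancestor (in $T$) of the "lower" vertex, here $q_i$, which is exactly our assumption that $q_i$ is a descendant of $v$; and it requires $q_{i-1}$ to be a neighbor of $q_i$ \emph{in $G$}, which holds because $q_{i-1}q_i$ is an edge of the path $Q$ in $G$. I do not expect any real obstacle — the lemma is essentially an immediate corollary of Lemma~\ref{lem:common-nghbr-branch}, the content being that a path cannot "jump into" the subtree below $v$ without passing through a common in-neighbor of $v$. One edge case worth a sentence: if $u = w$ the statement is vacuous (or the hypotheses are contradictory, since $u$ would be both a descendant and not a descendant of $v$), so we may assume $\ell \geq 1$.
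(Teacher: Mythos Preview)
Your proof is correct and follows exactly the same approach as the paper: locate an edge of the path whose endpoints straddle the descendant/non-descendant divide with respect to $v$, then apply Lemma~\ref{lem:common-nghbr-branch} to force the non-descendant endpoint to be an in-neighbor of $v$. The paper's version is simply terser, omitting the explicit indexing and edge-case discussion.
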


\begin{proof}
  Let $P$ be a path in $G$ from $u$ to $w$.  Since $u$ is not a
  descendant of $v$ while $w$ is, $P$ must contain an edge $u'w'$ such
  that $u'$ is not a descendant of $v$ while $w'$ is.  By
  Lemma~\ref{lem:common-nghbr-branch} applied to $v$ and $w'$,
  $u'$ is a in-neighbor of $ v $.
\end{proof}

A \emph{full in-star cutset} in an oriented graph $G$ is a set
$ S = N^-[v] $ for some vertex $ v \in V(G)$ such that $G\sm S$ is
disconnected.  A \emph{full star cutset} in a graph $G$ (oriented or
not) is a set $S = N[v] $ for some vertex $ v \in V(G)$ such that
$G\sm S$ is disconnected.  A \emph{star cutset} in a graph $G$ (oriented or
not) is a set $S$ such that for some vertex $ v \in V(G)$, $\{v\} \subseteq S \subseteq  N[v]$, and $G\sm S$ is
disconnected. In this case, we say that the star cutset $ S $ is centered at $ v $.

We say that in graph $ G $, the star cutset $ S $ separates two vertices $ u $ and~$ v $ if $ u $ and $ v $ are in two distinct connected components of $ G \sm S$.

\begin{lemma}
  \label{lem:pre-fullstarcutsetlemma3}
  Suppose that $ G $ is an oriented graph derived from a Burling tree
  $T$. Let $u$, $v$, and $w$ be three vertices of $ G $ appearing in
  this order along a branch of $T$. Then every path (not necessarily
  directed) in $G$ from $u$ to $w$ goes through an in-neighbor of $v$
  in $G$.

  In particular, $N^-[v]$ is a full in-star cutset of $G$ and $N[v]$
  is a full star cutset of $G$, which separated $ u $ and $ v$.
\end{lemma}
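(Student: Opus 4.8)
The plan is to derive this lemma almost directly from Lemma~\ref{lem:pre-fullstarcutsetlemma2}. First I would unpack the hypothesis: if $u$, $v$, $w$ appear in this order along a branch of $T$, then $u$ is a strict ancestor of $v$ and $w$ is a strict descendant of $v$. In particular $w$ is a descendant of $v$, while $u$, being a strict ancestor of $v$ in the tree $T$, is not a descendant of $v$ (in a tree, a vertex that is both an ancestor and a descendant of $v$ equals $v$). So the triple $(u,v,w)$ meets the hypotheses of Lemma~\ref{lem:pre-fullstarcutsetlemma2}, which yields immediately that every path (not necessarily directed) in $G$ from $u$ to $w$ contains an in-neighbor of $v$ in $G$. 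That settles the first assertion.

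For the ``in particular'' part, the one extra ingredient I would use is that $u$ and $w$ are \emph{not} neighbors of $v$ in $G$: indeed $u$, $v$, $w$ all lie on a single branch of $T$ and all belong to $V(G)$, and a branch of $T$ restricted to $V(G)$ is a stable set of $G$ (recalled just before Lemma~\ref{lem:simple}). Hence $u$ and $w$ are two distinct vertices of $G\sm N[v]$, and a fortiori of $G\sm N^-[v]$. By the first assertion, every path in $G$ from $u$ to $w$ meets $N^-(v)\subseteq N^-[v]\subseteq N[v]$, so $u$ and $w$ lie in distinct connected components of $G\sm N^-[v]$, and likewise of $G\sm N[v]$. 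Therefore $G\sm N^-[v]$ and $G\sm N[v]$ are disconnected, i.e.\ $N^-[v]$ is a full in-star cutset and $N[v]$ is a full star cutset, each separating $u$ and $w$.

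I do not expect a genuine obstacle once Lemma~\ref{lem:pre-fullstarcutsetlemma2} is available: the only point requiring care is checking that $u$ and $w$ themselves survive the deletion of $N^-[v]$ (resp.\ $N[v]$), which is exactly where the stable-set property of branches enters, together with the trivial remark $u\neq w$. (One caveat: the displayed statement says the cutset separates ``$u$ and $v$''; since $v\in N^-[v]$ this must read ``$u$ and $w$'', which is what the argument delivers.)
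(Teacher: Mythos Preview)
Your proposal is correct and follows the same approach as the paper: both derive the lemma directly from Lemma~\ref{lem:pre-fullstarcutsetlemma2}. The paper's proof is a single sentence (``Follows from Lemma~\ref{lem:pre-fullstarcutsetlemma2}, and $u$ and $w$ are in distinct connected components of $G\sm N^-[v]$''), so your version actually supplies a detail the paper leaves implicit, namely that $u,w\notin N^-[v]$ because branches of $T$ restricted to $V(G)$ are stable sets; your observation about the typo (``$u$ and $v$'' should read ``$u$ and $w$'') is also correct.
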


\begin{proof}
  Follows from Lemma~\ref{lem:pre-fullstarcutsetlemma2}, and
  $u$ and $w$ are in distinct connected components of $G\sm N^-[v]$.
\end{proof}

\begin{lemma}
  \label{l:degA1}
  If a triangle-free oriented graph $G$ has a cut-vertex, then either
  $G$ has a full in-star cutset, or $G$ has a vertex of degree at
  most~1.
\end{lemma}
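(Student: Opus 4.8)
The plan is to use the cut vertex together with the tree structure of $G$ to produce a vertex $v$ whose closed in-neighborhood is a cutset, unless some degeneracy forces a vertex of degree at most~$1$. Suppose $G$ is derived from a Burling tree $T$ and has a cut vertex $c$; let $A$ and $B$ be the vertex sets of two distinct connected components of $G\sm\{c\}$, and pick $a\in A$, $b\in B$. I want to locate three vertices appearing in this order along a branch of $T$ so that Lemma~\ref{lem:pre-fullstarcutsetlemma3} applies. The natural candidate is to look at $p(c)$, $c$, and some common descendant-type configuration, but more robustly: consider the paths in $G$ and the ancestry relation in $T$ among $a$, $c$, $b$. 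Since $a$--$c$ and $c$--$b$ are edges of $G$, Lemma~\ref{lem:simple} controls the parents: each edge forces an ancestor relation between the parents of its endpoints in $T$.

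First I would analyze the position of $c$ relative to its neighbors in $T$. Because $G$ is connected (we may pass to the component containing $c$), and because any path from $a$ to $b$ must pass through $c$, I would use Lemma~\ref{lem:pre-fullstarcutsetlemma2} in contrapositive: if for every neighbor $u$ of $c$ in $G$ there were a path from $a$ to $b$ in $G\sm N^-[c]$, that would contradict $c$ being a cut vertex only if $N^-[c]$ itself is small. So the key dichotomy is: either $N^-[c]$ already disconnects $a$ from $b$ (then $N^-[c]$ is a full in-star cutset and we are done with the first alternative), or $a$ and $b$ lie in the same component of $G\sm N^-[c]$. In the latter case, since $c$ separates them in $G\sm\{c\}$ but not in $G\sm N^-[c]$... actually $N^-[c]\supseteq\{c\}$, so $G\sm N^-[c]\subseteq G\sm\{c\}$, hence $a$ and $b$ are automatically in different components of $G\sm N^-[c]$ as soon as they are in $A$ and $B$. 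So $N^-[c]$ is \emph{always} a full in-star cutset? That cannot be the intended statement, so I must be careful: the issue is that $a$ and $b$ must be chosen so that they actually end up in distinct components of $G\sm N^-[c]$, which they do, but $N^-[c]$ being a cutset requires that $G\sm N^-[c]$ is disconnected, i.e. nonempty on both sides — that is, there must be vertices on both sides not in $N^-[c]$. The degenerate case is exactly when removing $N^-[c]$ from one side of the cut kills that whole side.

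So the real plan is: fix the cut vertex $c$ and components $A$, $B$ of $G\sm\{c\}$. If both $A\sm N^-[c]$ and $B\sm N^-[c]$ are nonempty, then $N^-[c]$ is a full in-star cutset and we take the first alternative. Otherwise, say $A\subseteq N^-[c]$, meaning every vertex of $A$ is an in-neighbor of $c$ (note $A\cap\{c\}=\varnothing$ and $A$ is disjoint from $N^-(c)\cup\{c\}$ only on the complement, so $A\subseteq N^-(c)$). Now I would pick $a\in A$ minimizing the depth of $p(a)$ in $T$ (closest to the root among parents of vertices of $A$), or better, take $a$ to be a vertex of $A$ with an extremal ancestry position. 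Since all vertices of $A$ are in-neighbors of $c$, their parents are all ancestors of $c$ in $T$ lying on the branch from the root to $c$, and each $a\in A$ is a descendant of $p(a)$'s last-born, hence $a$ itself lies on that same branch structure. The plan is then to show $a$ has degree at most~$1$: its only neighbor could be $c$. Any other neighbor $d$ of $a$ — if $d\in A$, then $a,d$ both being on branches from ancestors of $c$... I would use Lemma~\ref{lem:common-nghbr-branch} with the ancestor relation inside $A$ to show two in-neighbors of $c$ that lie in the same component $A$ and are adjacent would create a triangle with $c$ (both adjacent to $c$ and to each other), contradicting triangle-freeness; and $d\notin A$ means $d\in B\cup\{c\}$, but $d\in B$ is impossible since $a,d$ adjacent forces them in the same component of $G\sm\{c\}$. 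Hence the only possible neighbor of $a$ is $c$, so $\deg(a)\le 1$.

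The main obstacle I expect is the bookkeeping in the degenerate case: correctly arguing that every vertex of the ``swallowed'' side $A$ is an in-neighbor of $c$, and then that any two such vertices that are adjacent would form a triangle with $c$ — this last step is where triangle-freeness is essential and is presumably why the hypothesis is stated. One subtlety: I should make sure that if \emph{both} sides are swallowed, i.e. $A\cup B\subseteq N^-[c]$, the argument still yields a degree-$\le 1$ vertex (it does, by the same reasoning applied to any $a$ in any component other than the one containing — well, $c$ is isolated from $A$ and $B$, so just apply it to some $a\in A$). Another subtlety: handling the case where $G$ is disconnected, which I dispose of at the start by restricting to the component containing $c$, since a cut vertex lives in a connected component and both alternatives in the conclusion are preserved under adding isolated components. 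I also want to double-check the edge-case where $A$ or $B$ is a single vertex already of degree~$1$ — that immediately gives the second alternative, so it is harmless.
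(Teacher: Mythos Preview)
Your final argument is correct and is essentially the paper's proof, but you have buried it under machinery that is both unnecessary and, strictly speaking, illegitimate here. The lemma is stated for an \emph{arbitrary} triangle-free oriented graph with a cut vertex; it does not assume $G$ is derived from a Burling tree. So the opening line ``Suppose $G$ is derived from a Burling tree $T$'' proves a weaker statement than what is claimed, and all the appeals to Lemmas~\ref{lem:simple}, \ref{lem:common-nghbr-branch}, \ref{lem:pre-fullstarcutsetlemma2}, \ref{lem:pre-fullstarcutsetlemma3} are unavailable. Fortunately you never actually use any of that structure in the end: the argument you land on is purely the triangle-freeness observation.

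Stripped to its content, your proof and the paper's coincide. With cut vertex $c$ and components $A,B$ of $G\sm\{c\}$: if some $a\in A$ and $b\in B$ both lie outside $N^-[c]$, then since every $a$--$b$ path in $G$ passes through $c\in N^-[c]$, the set $N^-[c]$ disconnects them and is a full in-star cutset. Otherwise, say $A\subseteq N^-(c)$; then any edge inside $A$ would close a triangle with $c$, so $A$ (being connected) is a single vertex of degree~$1$. The paper organizes the same dichotomy by first disposing of $|A|\le 1$ or $|B|\le 1$, then noting that a connected set of size $\ge 2$ consisting entirely of neighbors of $c$ would contain a triangle, hence each side has a non-neighbor of $c$. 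Same idea, same use of triangle-freeness; just drop the Burling-tree scaffolding.
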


\begin{proof}
  Let $v$ be a cut-vertex of $G$.  Let $A$ and $B$ be two connected
  components of $G\sm v$.  If $|A|\leq 1$ or $|B|\leq 1$, then $G$ has
  a vertex of degree at most~1, so let us assume that $|A|\geq 2$ and
  $|B|\geq 2$.  Since $G$ is triangle-free, $A$ (resp.\ $B$) contains
  a non-neighbor $a$ (resp.\ $b$) of $v$.  It follows that $a$ and $b$
  are in distinct connected components of $G\sm N^-[v]$.  So, $G$ has
  a full in-star cutset centered at $v$.
\end{proof}

\begin{theorem}
  \label{th:decompD}
  If $G$ is an oriented Burling graph, then either $G$ has a full in-star
  cutset, or $G$ is an oriented chandelier, or $G$ contains a vertex of
  degree at most~1.
\end{theorem}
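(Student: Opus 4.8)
The plan is to argue by a case analysis on the connectivity of $G$, where in each case we land in one of the three outcomes. First, if $G$ has a vertex of degree at most~$1$ we are immediately in the third outcome, so from that point on I would assume every vertex of $G$ has degree at least~$2$. The goal is then to reduce to the situation where $G$ is connected, has no cut vertex, and has no vertex of degree at most~$1$, and to recognise this situation as being ``$2$-Burling'' so that the chandelier characterisation, Lemma~\ref{l:2burling}, applies directly.

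The two easy cases are these. If $G$ is disconnected, I would pick a source $v$ of $G$ (one exists by Lemma~\ref{l:source}); then $N^-[v]=\{v\}$, and since $v$ has degree at least~$2$ its connected component has at least three vertices, so $G\setminus\{v\}$ is again disconnected. Hence $N^-[v]$ is a full in-star cutset and we are done. If instead $G$ is connected but has a cut vertex, then $G$ is triangle-free by Lemma~\ref{lem:no-triangle}, so Lemma~\ref{l:degA1} yields that $G$ has a full in-star cutset or a vertex of degree at most~$1$; the latter being excluded, $G$ has a full in-star cutset.

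The substantive case is that $G$ is connected, has no cut vertex, and has minimum degree at least~$2$. Here I would fix a Burling tree $T$ from which $G$ is derived and count the vertices of $G$ lying on a single branch of $T$. If some branch carries at least three vertices of $G$, say $u$, $v$, $w$ appearing in this order along the branch, then Lemma~\ref{lem:pre-fullstarcutsetlemma3} tells us that $N^-[v]$ is a full in-star cutset of $G$, and we are done. Otherwise every branch of $T$ carries at most two vertices of $G$, which is precisely the definition of $G$ being a $2$-Burling graph. Since $G$ is moreover connected, has no cut vertex and no vertex of degree at most~$1$, Lemma~\ref{l:2burling} then gives that $G$ is an oriented chandelier, which is the second outcome.

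There is no single hard computation here once the earlier lemmas are available; the proof is essentially an assembly of Lemmas~\ref{l:source}, \ref{lem:no-triangle}, \ref{l:degA1}, \ref{lem:pre-fullstarcutsetlemma3} and~\ref{l:2burling}. The points that need care are: (i) not overlooking the disconnected case, since a disconnected $G$ can never be an oriented chandelier and so must be routed into one of the other two outcomes; (ii) verifying that a branch of $T$ with three vertices of $G$ really does meet the hypothesis of Lemma~\ref{lem:pre-fullstarcutsetlemma3} (it suffices that the three vertices occur in order along the branch, which is automatic); and (iii) recognising that ``no branch of $T$ contains three vertices of $G$'' is literally the statement ``$G$ is $2$-Burling'', which is the bridge from the last case to the chandelier characterisation of Lemma~\ref{l:2burling}.
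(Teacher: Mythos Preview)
Your proof is correct and follows essentially the same route as the paper: dispose of low-degree vertices, invoke Lemma~\ref{l:degA1} for the cut-vertex case, then count vertices of $G$ on a branch of $T$ and use Lemma~\ref{lem:pre-fullstarcutsetlemma3} to reduce to the $2$-Burling situation where Lemma~\ref{l:2burling} finishes. Your explicit handling of the disconnected case (via a source $v$ with $N^-[v]=\{v\}$) is in fact more careful than the paper's own argument, which passes over this point rather quickly.
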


\begin{proof}
  Suppose that $G$ has no vertex of degree at most~1. In particular,
  $|V(G)|\geq 3$.  By Lemma~\ref{l:degA1}, we may assume that $G$ has
  no cut-vertex (in particular $G$ is connected since $|V(G)|\geq 3$).
  We may assume that $G$ is a 2-Burling graph, for otherwise some
  branch of $T$ contains at least three vertices of $G$ and by
  Lemma~\ref{lem:pre-fullstarcutsetlemma2}, $G$ has a full in-star
  cutset.  So, $G$ is a connected 2-Burling graph with no cut-vertex and no vertex of degree at most 1.  Hence by Lemma~\ref{l:2burling},
  $G$ is an oriented chandelier.
\end{proof}

Theorem~\ref{th:decompD} is best possible in the sense that every oriented chandelier has no full in-star
cutset.

\subsection*{The non-oriented case}

 We may prove a theorem similar to
Theorem~\ref{th:decompD} for non-oriented graphs with the same
method.  We do not present its proof, because it
was already obtained in~\cite{Chalopin2014} for a superclass of Burling graphs, the
so-called \emph{restricted frame graphs}. (For definition of restricted frame graphs, see Definition~2.2 of~\cite{Chalopin2014}.)

A non-oriented graph obtained from a tree $H$ by adding a vertex $v$
adjacent to every leaf of $H$ is called in~\cite{Chalopin2014} a
\emph{chandelier}. If the tree $G$ has the property that the neighbor
of each leaf has degree two, then the chandelier is a \emph{luxury
  chandelier}. Observe that if $G$ is an oriented chandelier, then $G$
has no full in-star cutset, but it may have a full
star cutset.  It can be proved that the luxury chandeliers are exactly
the graphs with no full star cutset that are also underlying graphs of
oriented chandeliers. The following theorem is from \cite{Chalopin2014}, Corollary 3.4.

\begin{theorem}[Chalopin, Esperet, Li and Ossona de Mendez]
  \label{th:chalop}
  If $G$ is a non-oriented connected Burling graph, then $G$ has a
  full star cutset, or $G$ is luxury chandelier, or $G$ is an induced
  subgraph of $P_4$.
\end{theorem}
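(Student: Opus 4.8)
The plan is to imitate the proof of Theorem~\ref{th:decompD}, wrapped in an induction on $|V(G)|$ to accommodate the extra $P_4$ outcome. First I would establish the non-oriented analogue of Lemma~\ref{l:degA1}: \emph{a triangle-free graph with a cut vertex has a full star cutset or a vertex of degree at most~$1$}. The proof copies that of Lemma~\ref{l:degA1}: if $v$ is a cut vertex and both components of $G\setminus v$ have at least two vertices, each of them contains an edge, hence a non-neighbour of $v$ (by triangle-freeness), and these two non-neighbours lie in distinct components of $G\setminus N[v]$; otherwise $G$ has a vertex of degree at most~$1$. I would also record two elementary facts about full star cutsets and pendant vertices. \emph{(a)} If $v$ is a degree-$1$ vertex of a connected graph $G$ and $G$ has no full star cutset, then $G\setminus v$ has no full star cutset: checking the centre $w$ of a putative full star cutset of $G\setminus v$, if $w$ is not the neighbour of $v$ then $N_G[w]=N_{G\setminus v}[w]$ and re-attaching $v$ cannot reconnect the graph, while if $w$ is the neighbour of $v$ then $N_G[w]=N_{G\setminus v}[w]\cup\{v\}$ and $G\setminus N_G[w]=(G\setminus v)\setminus N_{G\setminus v}[w]$. \emph{(b)} If $C$ is a luxury chandelier and $C^{+}$ is obtained from $C$ by adding a pendant vertex adjacent to some $u\in V(C)$, then $C^{+}$ has a full star cutset, namely $N_{C^{+}}[z]$ for any neighbour $z$ of $u$ in $C$: indeed $u\in N_C[z]$, so the new pendant vertex becomes isolated in $C^{+}\setminus N_{C^{+}}[z]$, while $C\setminus N_C[z]$ is non-empty (a luxury chandelier contains a cycle, hence is not a star, hence has no universal vertex) and connected (a short case analysis on whether $z$ is the apex, a leaf of the underlying tree, or another vertex).

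Second, I would pin down the chandelier case. If $G$ is the underlying graph of an oriented chandelier and $G$ has no vertex of degree at most~$1$, then the root of the base in-tree has degree at least~$2$ in the underlying tree $H$, so it is not a leaf of $H$, the ``leaves of the in-tree'' are exactly the leaves of $H$, and $G$ is precisely a chandelier over $H$ in the sense of~\cite{Chalopin2014}, with $H$ a tree having at least two leaves. Such a chandelier is automatically triangle-free, and if it is \emph{not} luxury then some leaf $w$ of $H$ has a neighbour $w'$ with $\deg_H(w')\geq 3$ (its degree cannot be~$1$ unless $H=P_2$, which is not triangle-free as a chandelier); then $N[w]=\{w,w',a\}$, where $a$ is the apex, and $G\setminus N[w]=H\setminus\{w,w'\}$ is a forest with at least two non-empty components, so $N[w]$ is a full star cutset of $G$. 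Hence a chandelier with no full star cutset is a luxury chandelier.

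Now the main argument runs by induction on $n=|V(G)|$: assume $G$ is a connected Burling graph with no full star cutset and prove it is a luxury chandelier or an induced subgraph of $P_4$. If $G$ has a vertex of degree at most~$1$, then since $G$ is connected it is a pendant vertex $v$ with neighbour $u$; by~(a), $G\setminus v$ is a connected Burling graph with no full star cutset, so by induction it is a luxury chandelier or an induced subgraph of $P_4$. The former is impossible by~(b), so $G\setminus v$ is a connected induced subgraph of $P_4$, whence $n\leq 5$ and $G$ is obtained from one of $P_1,\dots,P_4$ by adding a single pendant vertex; an inspection of these finitely many graphs shows that each one that is not an induced subgraph of $P_4$ (the claw, the chair, and $P_5$) contains a full star cutset, so $G$ is an induced subgraph of $P_4$. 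If instead $G$ has no vertex of degree at most~$1$, then by the analogue of Lemma~\ref{l:degA1} it has no cut vertex, and $n\geq 3$. Take an oriented Burling graph $G'$ whose underlying graph is $G$, derived from a Burling tree $T$; then $G'$ is connected, has no cut vertex and no vertex of degree at most~$1$. If some branch of $T$ contained three vertices $u,v,w$ of $G'$, listed in branch order, then $u$, $v$, $w$ would be pairwise non-adjacent in $G$ (a branch restricted to $V(G)$ is a stable set), so by Lemma~\ref{lem:pre-fullstarcutsetlemma3} the set $N^-[v]\subseteq N[v]$ would separate $u$ from $w$ and $N[v]$ would be a full star cutset of $G$, a contradiction. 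Therefore $G'$ is a $2$-Burling graph, so by Lemma~\ref{l:2burling} it is an oriented chandelier and $G$ is the underlying graph of an oriented chandelier; by the previous paragraph $G$ is a luxury chandelier.

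The parts that merely transcribe the proof of Theorem~\ref{th:decompD} --- the analogue of Lemma~\ref{l:degA1} and the reduction to $2$-Burling graphs through Lemma~\ref{lem:pre-fullstarcutsetlemma3} --- are routine. The main obstacle, and the reason the $P_4$ outcome is forced, is the pendant-vertex analysis: controlling how full star cutsets behave under adding or deleting a pendant vertex, and verifying the ``rigidity'' of luxury chandeliers (no universal vertex, and $C\setminus N[z]$ connected for every vertex $z$) that powers fact~(b). This is exactly the piece that \cite{Chalopin2014} carries out inside the wider class of restricted frame graphs, and importing their statement verbatim, as the present paper does, is a legitimate shortcut.
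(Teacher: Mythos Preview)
The paper does not actually prove Theorem~\ref{th:chalop}: it explicitly says ``We do not present its proof, because it was already obtained in~\cite{Chalopin2014}'', and only remarks that ``a theorem similar to Theorem~\ref{th:decompD} for non-oriented graphs'' can be proved ``with the same method''. Your proposal is precisely an implementation of that remark: you rerun the proof of Theorem~\ref{th:decompD} (non-oriented analogue of Lemma~\ref{l:degA1}, reduction to $2$-Burling graphs via Lemma~\ref{lem:pre-fullstarcutsetlemma3}, then Lemma~\ref{l:2burling}), and you add the extra layer---the pendant-vertex induction and the check that a non-luxury chandelier has a full star cutset---needed to produce the $P_4$ outcome that the oriented Theorem~\ref{th:decompD} does not have.

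The argument is correct. Two small comments. In fact~(b) you only need that $C\setminus N_C[z]$ is \emph{non-empty} (so that the pendant vertex forms a second component of $C^+\setminus N_{C^+}[z]$); the connectedness claim is superfluous and you can drop that case analysis. And in the pendant step, note the degenerate case $|V(G)|=1$ separately (degree~$0$, not a genuine pendant), which you handle implicitly but might as well state. Otherwise your write-up matches what the paper says one would do, just fleshed out; there is nothing to compare it against in the paper itself beyond that one-line hint.
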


\begin{figure}
  \begin{center}
    \includegraphics[width=8cm]{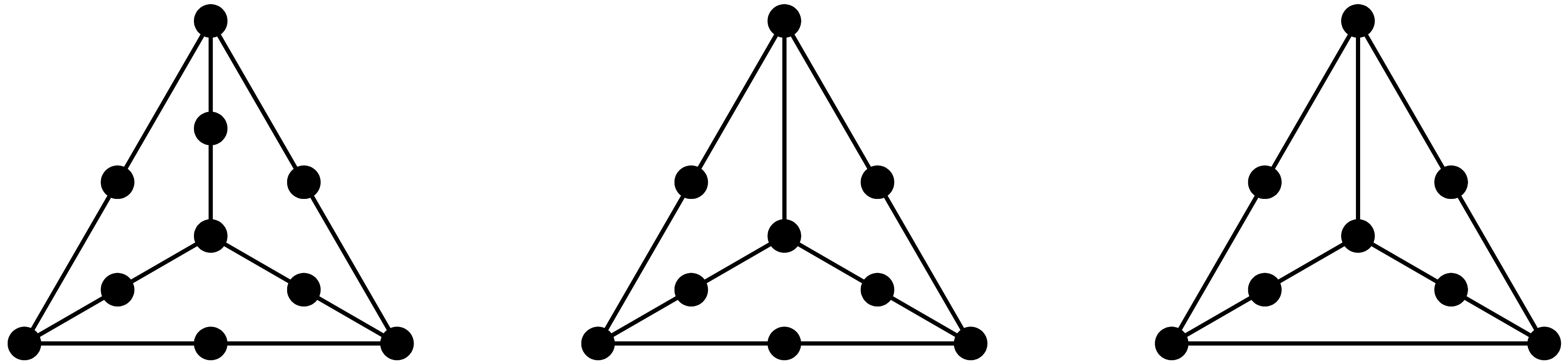}
    \caption{Some subdivisions of $K_4$ that are not
      Burling.\label{f:nonBurlingK4-4}}
  \end{center}
\end{figure}

As explained in~\cite{Chalopin2014}, Theorem~\ref{th:chalop} conveniently gives
graphs that are not Burling.  For instance, the three graphs
represented in Figure~\ref{f:nonBurlingK4-4} are not Burling because
they have no full star cutset and are not luxury chandelier (because
in a luxury chandelier, there exists a vertex that is contained in all
cycles).

In~\cite{CPST:subst}, the following question is asked: is there a constant $c$ such
that if a graph $G$ is triangle-free and all induced subgraphs of $G$
either are 3-colorable or admit a star cutset, then $G$ is
$c$-colorable?  It was seemingly never noticed that
Theorem~\ref{th:chalop} answers the question in the negative, since
luxury chandelier are 3-colorable and Burling graphs are triangle-free
graphs of unbounded chromatic number.

\section{Holes in Burling graphs}
\label{sec:holes}

A \emph{hole} in a graph $ G $ is a chordless cycle of length at
least~4.  We call hole of an oriented graph any hole of its underlying
graph. By Theorem~\ref{th:decompD}, since a hole has no in-star cutset
(whatever the orientation) and no vertex of degree~1, every hole in an
oriented graph derived from a Burling tree $T$ is an oriented
chandelier.  In particular, the explanations given after the proof of
Lemma~\ref{l:2burling} apply. Therefore, every hole $H$ has four
\emph{special} vertices that we describe here:

\begin{itemize}
  \item two sources called the
  \emph{antennas}, 
\item one common neighbor of the antennas that is also an ancestor
  in $T$ of all the vertices but the antennas, called the
  \emph{pivot},

  \item one sink distinct from the pivot, called the \emph{bottom}.
\end{itemize}

Every other vertex of $ H $ lies on a directed paths from an antenna to the bottom.
We call \emph{subordinate} vertex of a hole any vertex
distinct from its pivot and antennas (in particular, the bottom is
subordinate and is therefore a descendant of the pivot).

\begin{lemma}
  \label{l:compHole}
  Let $H$ be a hole in an oriented graph $G$ derived from a Burling
  tree $T$. Let $p$ be the pivot of $H$ and $C$ be the connected
  component of $G\sm N^-[p]$ that contains $H\sm N^-[p]$.  Then every
  vertex of $C$ is a descendant of $p$. 
\end{lemma}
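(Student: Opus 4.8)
The plan is to use the characterization of holes as oriented chandeliers together with Lemma~\ref{lem:pre-fullstarcutsetlemma2}. First I would set up notation: let $p$ be the pivot of $H$, which by the description of special vertices is an ancestor in $T$ of every subordinate vertex of $H$, i.e.\ of every vertex of $H$ except the two antennas $a_1,a_2$. Note that $a_1,a_2$ are in-neighbors of $p$, so $a_1,a_2\in N^-[p]$, and hence $H\sm N^-[p]$ consists precisely of the subordinate vertices of $H$ (the bottom and the vertices on the two directed antenna-to-bottom paths), minus possibly a few more that happen to lie in $N^-[p]$. In any case, every vertex of $H\sm N^-[p]$ is a subordinate vertex of $H$, hence a descendant of $p$ in $T$.

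Now let $C$ be the connected component of $G\sm N^-[p]$ containing $H\sm N^-[p]$, and suppose for contradiction that $C$ contains a vertex $u$ that is not a descendant of $p$. Pick any vertex $w\in H\sm N^-[p]$; then $w$ is a descendant of $p$. Since $u$ and $w$ lie in the same connected component $C$ of $G\sm N^-[p]$, there is a path $P$ in $G\sm N^-[p]$ from $u$ to $w$. But $w$ is a descendant of $p$ while $u$ is not, so Lemma~\ref{lem:pre-fullstarcutsetlemma2} (applied with the roles $u\mapsto u$, $v\mapsto p$, $w\mapsto w$) tells us that $P$ must contain an in-neighbor of $p$ in $G$, that is, a vertex of $N^-[p]$. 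This contradicts the fact that $P$ lies entirely in $G\sm N^-[p]$. Hence no such $u$ exists, and every vertex of $C$ is a descendant of $p$.

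The only point requiring a little care — and the main obstacle, though it is minor — is to be sure that $H\sm N^-[p]$ is nonempty and consists of descendants of $p$, so that the argument has something to work with and Lemma~\ref{lem:pre-fullstarcutsetlemma2} applies. Nonemptiness follows because $H$, being a hole (a chandelier with at least two leaves in its in-tree part), has at least one subordinate vertex, for instance its bottom, which is distinct from the pivot and the antennas and hence lies outside $N^-[p]$ (using that $H$ is an induced subgraph of $G$, so the only neighbors of $p$ \emph{within $H$} are the antennas and at most one subordinate neighbor of $p$ on a directed path, but $H$ still has a subordinate vertex that is not adjacent to $p$ — e.g.\ take a subordinate vertex at distance $\geq 2$ from $p$ along $H$, which exists since holes have length at least $4$). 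That every vertex of $H\sm N^-[p]$ is a descendant of $p$ is immediate from the fact, recalled after Lemma~\ref{l:2burling} and in the list of special vertices, that the pivot of $H$ is an ancestor in $T$ of all vertices of $H$ except the antennas.
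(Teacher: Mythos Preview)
Your proof is correct and follows essentially the same route as the paper: assume some $u\in C$ is not a descendant of $p$, pick a descendant $w\in H\sm N^-[p]$, and use Lemma~\ref{lem:pre-fullstarcutsetlemma2} to force an in-neighbor of $p$ on any path from $u$ to $w$ in $C$, a contradiction. Your extra care about nonemptiness is fine, though the last paragraph is slightly muddled: since a hole is an \emph{induced} cycle, the only neighbors of $p$ among $V(H)$ (in $G$, not just in $H$) are the two antennas, so $H\sm N^-[p]$ is exactly the set of subordinate vertices and there is no ``subordinate neighbor of $p$'' to worry about.
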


\begin{proof}
  Suppose for the sake of contradiction, that the statement does not hold. So, $C$ contains a vertex $u$ that is not a descendant of
  $p$. Since every vertex of $ H \sm N^-[p]$ is a descendant of $p$,
  there exists a descendant $v$ of $p$ in $C$.  Let $P$ be a path from
  $u$ to $v$ in $C$. By
  Lemma~\ref{lem:pre-fullstarcutsetlemma2}, $P$ contains an
  in-neighbor of $p$.  This contradicts the definition of $C$. 
\end{proof}

A \emph{dumbbell} is a graph made of path $P=x\dots x'$ (possibly
$x=x'$), a hole $H$ that goes through $x$ and a hole $H'$ that
goes through $x'$.  Moreover $V(H) \cap V(P) = \{x\}$,
$V(H) \cap V(P') = \{x'\}$,
$V(H) \cap V(H') = \{x\} \cap \{x'\}$ and there are no other
edges than the edges of the path and the edges of the holes.

\begin{lemma}
  \label{l:dumbbell}
  Suppose a dumbbell with holes $H$, $H'$ and path $P=x \dots x'$ as
  in the definition is the underlying graph of some oriented graph $G$
  derived from a Burling tree $T$. Then either $x$ is not a
  subordinate vertex of $ H $ or $x'$ is not a subordinate vertex of
  $H'$.
\end{lemma}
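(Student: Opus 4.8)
The plan is to argue by contradiction, assuming that $x$ is a subordinate vertex of $H$ \emph{and} $x'$ is a subordinate vertex of $H'$, and then to derive a contradiction from the structure of holes in derived graphs described just before the statement. Recall that every hole in a derived graph is an oriented chandelier, so each of $H$ and $H'$ has a pivot, two antennas, and a bottom; by Lemma~\ref{l:pivotNoCutV}/the chandelier structure the pivot is an ancestor (in $T$) of all subordinate vertices of its hole. Let $p$ be the pivot of $H$ and $p'$ be the pivot of $H'$. Since $x$ is subordinate in $H$, $x$ is a strict descendant of $p$; likewise $x'$ is a strict descendant of $p'$.

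First I would treat the case $x = x'$ (so the path $P$ has length $0$). Then $x$ is a strict descendant of both $p$ and $p'$, hence $p$ and $p'$ are comparable in $T$; say $p$ is an ancestor of $p'$ (the other case is symmetric). Now I want to apply Lemma~\ref{lem:pre-fullstarcutsetlemma2} or Lemma~\ref{lem:pre-fullstarcutsetlemma3}: since $H'$ contains $p'$, which is a descendant of $p$, and $H'$ also contains its two antennas, which — being sources that are \emph{not} descendants of $p'$ in the chandelier picture — need not be descendants of $p$ either. The key point is that $H$ itself, being a hole through $x$ with $x$ subordinate, must contain vertices not below $p'$ (for instance the antennas of $H$, or $p$ itself), while it also contains $x$, a descendant of $p'$. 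By Lemma~\ref{lem:pre-fullstarcutsetlemma2} any path in $G$ avoiding $N^-[p']$ cannot join a descendant of $p'$ to a non-descendant of $p'$; but $H$ is a cycle, so walking around $H$ from $x$ one eventually leaves the descendants of $p'$, and the edge where this happens must have an endpoint in $N^-[p']$. I would then check that this forces a vertex of $H$ other than $x$ to be adjacent to $p'$ — but $p'$ together with the two antennas of $H'$ and the bottom of $H'$, intersected with $V(H)$, is just $\{x\}$ by the dumbbell hypothesis, contradicting that $H$ has an edge leaving the descendants of $p'$ at a vertex $\neq x$.

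For the general case $x \neq x'$, the path $P$ from $x$ to $x'$ together with the two holes gives a connected graph, and I would use Lemma~\ref{l:compHole}: let $C$ be the connected component of $G \sm N^-[p]$ containing $H \sm N^-[p]$; by Lemma~\ref{l:compHole} every vertex of $C$ is a descendant of $p$. Since $x$ is subordinate, $x \in H \sm N^-[p]$, so $x \in C$. Now if $p'$ were in $C$, it would be a descendant of $p$, making $p, p'$ comparable, and I would reduce to an argument like the $x=x'$ case. If $p'$ is not in $C$, I would trace the path $P$ from $x$ to $x'$ (and then into $H'$) and find that it must cross $N^-[p]$, meaning some vertex of $P \cup H'$ lies in $N^-[p]$; combined with Lemma~\ref{lem:pre-fullstarcutsetlemma2} applied to $p$, and the dumbbell condition that $N[p] \cap (V(P) \cup V(H'))$ is severely constrained (the only $P$-vertex and $H'$-vertex that $H$ can touch is $x$, via the dumbbell's no-extra-edges clause, and $x$ itself is subordinate hence not in $N^-[p]$... here I need to be careful, since $p \in N^-[p]$ and $p \in V(H)$), I expect to reach a contradiction. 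The symmetric roles of $(H,x,p)$ and $(H',x',p')$ should let me assume WLOG one configuration.

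\textbf{Main obstacle.} The hard part will be the bookkeeping about \emph{which} of the special vertices of one hole can coincide with or be adjacent to vertices of the other hole and of $P$, given the dumbbell's strict edge condition $V(H)\cap V(H') = \{x\}\cap\{x'\}$ and ``no other edges.'' In particular one must handle carefully the possibility that $p$ or $p'$ coincides with $x$ or $x'$ respectively — but this is exactly excluded by the assumption that $x$ is subordinate in $H$ and $x'$ is subordinate in $H'$, so the pivots are \emph{strict} ancestors, which is what makes Lemmas~\ref{lem:pre-fullstarcutsetlemma2} and~\ref{l:compHole} bite. The cleanest route is probably: show $p$ and $p'$ are comparable in $T$ (using that $x, x'$ are linked by $P$ and both lie strictly below their pivots, via Lemma~\ref{lem:common-nghbr-branch} or~\ref{lem:pre-fullstarcutsetlemma2}), assume $p$ is an ancestor of $p'$, then observe that an antenna of $H$ is a source not below $p'$ while $x$ is below $p'$, so the hole $H$ is a cycle with vertices on both sides of the ``descendant-of-$p'$'' partition, forcing (by Lemma~\ref{lem:pre-fullstarcutsetlemma2}) a vertex of $H$ in $N^-[p']$ distinct from $x$, which the dumbbell edge condition forbids.
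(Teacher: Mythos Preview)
Your strategy matches the paper's: contradiction, Lemma~\ref{l:compHole} to make $p$ and $p'$ comparable, then a forbidden edge. But you are missing the observation that dissolves both your case split and your ``$p'\notin C$'' worry: since $x$ is subordinate, $x\notin N^-[p]$ (the pivot's only neighbours are the antennas), so $H\sm N^-[p]$ is a path through $x$, and the dumbbell's no-extra-edges clause then makes $G\sm N^-[p]$ \emph{connected}. Hence $C=G\sm N^-[p]$ in Lemma~\ref{l:compHole}, and $p'$ (indeed every vertex outside $N^-[p]$) is automatically a descendant of $p$; the branch ``if $p'$ is not in $C$'' never occurs. Symmetrically every vertex of $G\sm N^-[p']$ is a descendant of $p'$; this is precisely what justifies your unproven claim ``$x$ is below $p'$'' in the general case.

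The paper's endgame is also more direct than your crossing argument. After fixing $p$ as an ancestor of $p'$, take an antenna $q$ of $H$ and let $r$ be its neighbour in $H$ other than $p$. Then $r$ is subordinate in $H$, so $r\in G\sm N^-[p']$, so $r$ is a descendant of $p'$. Thus $p'$ lies on the $T$-branch from $p$ to $r$; since both $p$ and $r$ lie in $c(q)$, so does $p'$, giving the arc $qp'$ --- an edge from $H\sm\{x\}$ to $H'\sm\{x'\}$, which the dumbbell hypothesis forbids. Your crossing argument via Lemma~\ref{lem:pre-fullstarcutsetlemma2} reaches the same contradiction, just less explicitly.
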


\begin{proof}
  Suppose for the sake of contradiction, that the statement does not hold. So, the pivot $p$ of $H$ is in the interior of the path
  $H\sm x$ and the pivot $p'$ of $H'$ is in the interior of the path
  $H'\sm x'$.  By Lemma~\ref{l:compHole} applied to $H$, every vertex
  of $G \sm N[p]$ is a descendant of $p$ in $T$ (notice that
  $N[p] = N^-[p]$ since the pivot is a sink). By
  Lemma~\ref{l:compHole} applied to $H'$, every vertex of $G\sm N[p']$
  is a descendant of $p'$ in $ T$. It follows that $p$ and $p'$ are on
  the same branch of $T$, so up to symmmetry, we may assume that $p$
  is an ancestor of $p'$.  Let $q$ and $r$ be vertices of $H$ such
  that $p$, $q$ and $r$ are consecutive along $H$.  So, $q$ is an
  antenna of $H$ (because it is adjacent to the pivot), and $r$ is a
  descendant of $p$, but also of $p'$.  Thus $p'$ is between $p$ and
  $r$ in some branch of $T$. Now because $p$ and $r$ are both in
  $c(q)$, so is $p'$. Hence $q$ is adjacent to $p'$, a contradiction
  to the definition of dumbbells.
\end{proof}

A \emph{domino} is a graph made of one edge $xy$ and two holes $H_1$ and
$H_2$ that both go through $xy$. Moreover 
$V(H_1) \cap V(H_2) = \{x, y\}$ and there are no other
edges than the edges of the holes.

\begin{lemma}
  \label{l:domino}
  Suppose a domino with holes $H_1$, $H_2$ and edge $xy$
  as in the definition is the underlying graph of some oriented graph
  $G$ derived from a Burling tree $ T$. Then for some $z\in \{x, y\}$ and
  some $i\in \{1, 2\}$, $z$ is the pivot of $H_i$ and $z$ is a subordinate
  vertex of $H_{3-i}$.
\end{lemma}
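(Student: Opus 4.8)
Since a domino is symmetric under exchanging $x$ and $y$, I would first assume $xy\in A(G)$. Each of $H_1,H_2$ is a hole of $G$, hence an oriented chandelier by Theorem~\ref{th:decompD} and the discussion following it; write $p_i$ for the pivot of $H_i$. Because $xy$ is an arc of both holes, in each $H_i$ the vertex $x$ has an out-neighbour ($y$) inside $H_i$, so $x$ is not a sink of $H_i$, hence $x$ is neither $p_i$ nor the bottom of $H_i$; dually $y$ has an in-neighbour inside $H_i$, so $y$ is not a source of $H_i$, hence not an antenna of $H_i$. Consequently, for each $i$, either $p_i=y$, or $y$ is a \emph{subordinate} vertex of $H_i$ and $p_i\in V(H_i)\sm\{x,y\}$, so (using $V(H_1)\cap V(H_2)=\{x,y\}$) $p_i\notin V(H_{3-i})$. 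Now I would split according to how many indices $i\in\{1,2\}$ satisfy $p_i=y$. If exactly one, say $p_1=y$ and $p_2\neq y$, then $y$ is the pivot of $H_1$ and, being neither the pivot nor an antenna of $H_2$, a subordinate vertex of $H_2$; the statement holds with $z=y$. It remains to rule out the two extreme cases.

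\emph{Case $p_1\neq y$ and $p_2\neq y$.} Here $p_1,p_2$ are as above, in particular $p_1\neq p_2$ and $y$ is subordinate in both holes. I would apply Lemma~\ref{l:compHole} to $H_1$: with $C_1$ the component of $G\sm N^-[p_1]$ containing $H_1\sm N^-[p_1]$, every vertex of $C_1$ is a descendant of $p_1$. Since $p_1$ is a vertex of $H_1$ not lying in $H_2$, all its domino-neighbours lie in $V(H_1)$, so $N^-[p_1]=N[p_1]$ meets $V(H_2)$ at most in $x$ (it cannot contain $y$: $y$ is subordinate in $H_1$ whereas $p_1$ is its pivot, so $y$ is not adjacent to $p_1$ in the induced subgraph $H_1$, hence not in $G$). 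Therefore the connected graph $H_2\sm x$ is disjoint from $N^-[p_1]$ and meets $H_1\sm N^-[p_1]$ in the vertex $y$, so $H_2\sm x\subseteq C_1$; in particular $p_2$, which differs from $x$, is a descendant of $p_1$. By the same argument with the roles of $H_1$ and $H_2$ interchanged, $p_1$ is a descendant of $p_2$. Hence $p_1=p_2$, a contradiction.

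\emph{Case $p_1=p_2=y$.} Then $x$ is a neighbour of the pivot in each $H_i$, hence an antenna of $H_i$; let $w_i$ be the other neighbour of $x$ in $H_i$, which is a subordinate vertex of $H_i$, so $x\to w_i\in A(G)$, $w_i\notin\{x,y\}$, and $w_1\neq w_2$ (as $w_i\in V(H_i)\sm\{x,y\}$ and the holes meet only in $\{x,y\}$). From $x\to y$, $x\to w_1$, $x\to w_2$ we get $y,w_1,w_2\in\choosePath(x)$, so these three vertices lie on one branch of $T$; moreover $y=p_i$ is a strict ancestor of $w_i$ (a pivot is an ancestor of every non-antenna vertex of its hole), and up to swapping $H_1$ and $H_2$ I may assume $w_1$ is a strict ancestor of $w_2$. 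Thus $y,w_1,w_2$ occur in this order along a branch of $T$, and Lemma~\ref{lem:pre-fullstarcutsetlemma3} shows every path of $G$ from $y$ to $w_2$ contains an in-neighbour of $w_1$. But $H_2\sm x$ is such a path, since $y$ and $w_2$ are the two neighbours of $x$ in the cycle $H_2$; so some $s\in V(H_2)\sm\{x\}$ is adjacent to $w_1$. Since $w_1\in V(H_1)\sm\{x,y\}$ does not lie in $H_2$ and every edge of the domino lies inside one of the two holes, the edge $sw_1$ lies in $H_1$, forcing $s\in(V(H_1)\cap V(H_2))\sm\{x\}=\{y\}$, i.e.\ $s=y$; but $y$ is the pivot of $H_1$ and $w_1$ a subordinate vertex of $H_1$, so they are not adjacent in $H_1$, hence not in $G$ --- a contradiction.

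The crux is these last two cases; the key tools are Lemma~\ref{l:compHole} (applied to pivots that are ``private'' to one hole) and the ``three vertices on a branch'' cutset Lemma~\ref{lem:pre-fullstarcutsetlemma3}, and the care needed is in tracking which vertices of each hole lie in the other and how the pivots sit relative to the branch $\choosePath(x)$. I expect the case $p_1=p_2=y$ to be the subtlest, because there deleting $N^-[y]$ splits the two holes into distinct components, so Lemma~\ref{l:compHole} alone does not suffice and one genuinely needs Lemma~\ref{lem:pre-fullstarcutsetlemma3}.
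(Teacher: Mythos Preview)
Your proof is correct and follows essentially the same strategy as the paper: split into cases according to whether the pivot of each hole lies in $\{x,y\}$, and handle the two extreme cases using Lemma~\ref{l:compHole} and Lemma~\ref{lem:pre-fullstarcutsetlemma3} respectively. The differences are in execution. In your case $p_1\neq y\neq p_2$, you apply Lemma~\ref{l:compHole} \emph{symmetrically} (once to $H_1$, once to $H_2$) to conclude that $p_1$ and $p_2$ are descendants of each other, hence equal --- this is cleaner than the paper's argument, which only obtains that $p_1,p_2$ are comparable on a branch and then needs an additional antenna argument (tracking $c(a)$ for an antenna $a$ of $H_2$) to reach a contradiction. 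In your case $p_1=p_2=y$, you use the pivot-is-ancestor property to place $y$ above $w_1,w_2$ on the branch and then apply Lemma~\ref{lem:pre-fullstarcutsetlemma3} directly; the paper instead argues that $x$ must sit \emph{between} the two analogues of your $w_i$'s on the branch (using that neither $w_i$ can be the center of a star cutset) and derives the contradiction from the pivot-is-ancestor property afterwards. Both routes work; yours is a bit more streamlined.
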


\begin{proof}
  Let us first prove that one of $x$ or $y$ is the pivot of one of
  $H_1$ or $H_2$. 
  Otherwise, the pivot $p_1$ of $H_1$ is in the path $H_1\sm \{x, y\}$ and
  the pivot $p_2$ of $H_2$ is in the path $H_2\sm \{x, y\}$.  Suppose
  up to symmetry that $yx$ is an arc of $G$.  It follows that
  $x\in V(G) \sm (N[p_1] \cup N[p_2])$.  Because $x\neq p_1, p_2$ by
  assumption, and $x\notin N(p_1) \cup N(p_2)$ because in a hole, the neighbors of the
  pivot are sources.  By Lemma~\ref{l:compHole} applied to $H_1$ and to
  $H_2$, $x$ is a descendant of both $p_1$ and $p_2$.  It follows that up
  to symmetry, we may assume that $p_1$ is a descendant of $p_2$.

  Let $a$ and $a'$ be the antennas of $H_2$.  Note that $a, a' \neq x$.
  Up to symmetry, suppose that $x$, $a$, $p_2$ and $a'$ appear in this
  order along $H_2$.  Let $x'$ be the neighbor of $a$ in $H\sm p_2$
  (possibly $x=x'$).  Since $x$ and $x'$ are in the same component of
  $G\sm (N[p_1] \cup N[p_2])$, $x'$ is a descendant of both $p_1$ and $p_2$.
  And since $ax'\in A(G)$, we have $x'\in c(a)$, so $p_1\in c(a)$ and
  $ap_1\in A(G)$, a contradiction to the definition of dominos.

  We proved one of $x$ or $y$ is the pivot of one of $H_1$ or $H_2$.
  Up to symmetry, suppose that $x$ is the pivot of $H_1$.  It remains
  to prove that $x$ is a subordinate vertex of $H_2$.  First, $x$
  cannot be an antenna of $H_2$ because $yx\in A(G)$.  Hence, we just
  have to prove that $x$ being the pivot of $H_2$ yields a
  contradiction. So, suppose that $x$ is the pivot of $H_2$.  It
  follows that $y$ is an antenna of both $H_1$ of $H_2$, so it is a
  source of $G$.  Let $y_1$ and $y_2$ be the neighbors of $y$ in
  $H_1\sm x$ and $H_2\sm x$ respectively.  Vertices $x$, $y_1$ and
  $y_2$ are on the same branch $B$ of $T$ (because they are all in
  $c(y)$).  So, by Lemma~\ref{lem:pre-fullstarcutsetlemma2}, $y_1$,
  $x$ and $y_2$ appear either in this order or in the reverse order
  along $B$, because $y_1$ and $y_2$ are not centers of star cutsets of
  $G$.  If $ y_2 $ is the deepest vertex in $ T $ among the three,
  then $y_1$ is an ancestor of $x$ while being in the hole $H_1 $ for
  which $x$ is the pivot, a contradiction. On the other hand, if
  $y_1 $ is the deepest, then $ y_2 $ is an ancestor of $x$ while
  being in the hole $H_2 $ for which $x$ is the pivot, again a
  contradiction.
\end{proof}

A \emph{theta} is a graph made of three internally vertex-disjoint
paths of length at least~2, each linking two vertices $u$ and $v$ called
the \emph{apexes} of the theta (and such that there are no other edges
than those of the paths).  A \emph{long theta} is a theta such that
all the paths between the two apexes of the theta have length at
least~3.

\begin{lemma} \label{lem:theta-pivot} Suppose a long theta with apexes
  $ u $ and $ v $ is the underlying graph of some oriented graph $G$
  derived from a Burling tree $ T$. Then exactly one of $ u $ and
  $ v $ is the pivot of every hole of~$G$.
\end{lemma}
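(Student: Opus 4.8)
The plan is to use that the long theta $G$ has exactly three holes, one for each $2$-element subset $\{i,j\}$ of $\{1,2,3\}$, namely $H_{ij}=P_i\cup P_j$, where $P_1,P_2,P_3$ are the three paths joining the apexes $u$ and $v$ (indeed, once a cycle enters the interior of some $P_i$ it must traverse all of $P_i$ and then exactly one further path). By Theorem~\ref{th:decompD} and the description of holes, each $H_{ij}$ is an oriented chandelier: it has a pivot $p_{ij}$, whose two neighbours in $H_{ij}$ are the two antennas of $H_{ij}$ (which are sources of $H_{ij}$), and $p_{ij}$ is a sink of $H_{ij}$ that is an ancestor in $T$ of every vertex of $H_{ij}$ other than its two antennas. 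Since each $P_i$ has length at least $3$, each hole has length at least $6$, so two vertices at distance $2$ in a hole have a unique common neighbour. The statement to prove amounts to: at least one of $u,v$ is the pivot of all three holes.

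The first step is a propagation claim: if $p_{12}$ is an apex, say $p_{12}=u$, then $p_{13}=p_{23}=u$. Write $u_i$ for the neighbour of $u$ on $P_i$ (an interior vertex of $P_i$, of degree $2$ in $G$). Since $u=p_{12}$, the antennas of $H_{12}$ are exactly $u_1$ and $u_2$; being sources of $H_{12}$ and of degree $2$, they are sources of $G$. Hence $u_1$ is a source of $H_{13}$, so an antenna of $H_{13}$, so $p_{13}$ is a neighbour of $u_1$ in $H_{13}$, i.e.\ $p_{13}\in\{u,u_1'\}$ where $u_1'$ is the neighbour of $u_1$ on $P_1$ other than $u$ (which exists and differs from $u,v,u_1$ since $P_1$ has length at least $3$). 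If $p_{13}=u_1'$, then on one hand $u_1'$ is an ancestor in $T$ of $u$, because $u$ lies in $H_{13}$ and is not one of its antennas; on the other hand $u=p_{12}$ is an ancestor in $T$ of $u_1'$, because $u_1'$ lies in $H_{12}$ and is neither $u_1$ nor $u_2$; so $u=u_1'$, absurd. Thus $p_{13}=u$, and symmetrically $u_3$ is a source of $G$. Then in $H_{23}$ the neighbours $u_2,u_3$ of $u$ both have their arc directed into $u$, so $u$ is a sink of $H_{23}$ and $u_2,u_3$ are its antennas; since $u$ is the unique common neighbour of $u_2$ and $u_3$ in $H_{23}$, we get $p_{23}=u$.

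The second step is that at least one of $p_{12},p_{13},p_{23}$ is an apex. Suppose not. Each $p_{ij}$ is a vertex of $H_{ij}$ distinct from $u$ and $v$, hence lies in the interior of $P_i$ or of $P_j$ and has degree $2$ in $G$. Relabelling, I may assume $p_{12}$ lies in the interior of $P_1$; then $p_{23}$, lying in the interior of $P_2$ or $P_3$, lies on a path different from $P_1$. Since $p_{12}$ has degree $2$ and its two neighbours are the antennas of $H_{12}$ directed into it, $p_{12}$ is a sink of $G$ and $N^-[p_{12}]$ consists of $p_{12}$ together with its two (consecutive) neighbours on $P_1$. Deleting these three vertices leaves $G$ connected, since the remaining pieces stay joined through $P_2$ and $P_3$; and $H_{12}\sm N^-[p_{12}]$ is a nonempty subpath of it. By Lemma~\ref{l:compHole}, every vertex of $G$ outside $N^-[p_{12}]$ is a descendant of $p_{12}$ in $T$; in particular $p_{23}$ is (it is not one of those three vertices, as it is not an apex and lies on a different path). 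Symmetrically $p_{12}$ is a descendant of $p_{23}$, so $p_{12}=p_{23}$, contradicting that they lie in the interiors of distinct paths.

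Combining the two steps, some $p_{ij}$ is an apex $w\in\{u,v\}$, and then all three pivots equal $w$; so $w$ is the pivot of every hole while the other apex is not (it is not the pivot of $H_{12}$), which is the ``exactly one'' of the statement. I expect the main obstacle to be the bookkeeping of degenerate positions in the second step — when a neighbour of $p_{12}$ is itself an apex, or when $P_1$ has length exactly $3$ — where one must check that $N^-[p_{12}]$ is precisely the three claimed vertices and that deleting them keeps $G$ connected; by contrast the ancestor-comparison used in the first step goes through uniformly.
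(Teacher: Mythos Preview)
Your proof is correct and follows a genuinely different route from the paper's. The paper argues by a single contradiction: assuming the pivot $p_1$ of some hole $H_1$ lies in the interior $Q_1$ of one of the paths, it observes that then no vertex of $Q_2$ is a source of $G$, which (using the length hypothesis) forces the pivot of $H_2$ into $Q_3$; the third hole $H_3$ then has its four extrema (two sources, two sinks) split two-and-two between $Q_1$ and $Q_3$, contradicting the fact that in any hole three of the four extrema---the pivot flanked by its two antennas---are consecutive. You instead separate the claim into a propagation step (if one pivot is an apex, all are) and an existence step (some pivot must be an apex), and the engine in both is the ancestor relation in $T$: in the first step you compare ancestors directly from the description of holes; in the second you invoke Lemma~\ref{l:compHole} to conclude that two non-apex pivots lying on distinct paths would be mutual ancestors and hence equal---exactly the mechanism behind Lemma~\ref{l:dumbbell}. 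A side benefit of your decomposition is that it makes explicit that all three holes share the \emph{same} apex as pivot; the paper's argument literally only shows that each pivot is \emph{some} apex and leaves the common-apex conclusion to the reader (it does follow easily, since an antenna of one hole would otherwise be a non-antenna source of another). Your worry about the degenerate positions in the second step is unfounded: even when a neighbour of $p_{12}$ is an apex, $N^-[p_{12}]$ still consists of three consecutive vertices of a single $P_i$ (at most one of which is $u$ or $v$, since the paths have length at least~$3$), and the other two paths keep $G\sm N^-[p_{12}]$ connected.
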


\begin{proof}
  Consider the three paths between $ u $ and $ v $, and let $ Q_1$,
  $Q_2 $, and $Q_3$ denote the set of the internal vertices of these
  three paths respectively (so,~$ |Q_i| \geq 2 $). For $i=1, 2, 3$,
  let $ H_i $ be the hole induced by $Q_i \cup Q_{i+1} \cup \{u, v\}$
  (with subscript taken modulo~3) and let $ p_i$, $a_i$, and $ a'_i $
  denote the pivot and the two antennas of $ H_i $.
		
  For the sake of contradiction, assume that there is a hole in $ G $,
  say $ H_1 $, for which neither of $ u $ and $ v $ is a pivot.  So,
  without loss of generality, assume that $p_1 \in Q_1$.  Also, notice
  that $ a_1 $ and $a'_1 $ are the two neighbors of $p_1 $. Thus:
  \begin{enumerate}
  \item[(i)] neither of $ a_1 $ and $a'_1 $ are in $Q_2 $, and
    consequently, no vertex of $ Q_2 $ is a source in $ G $,
			
  \item[(ii)] because the underlying graph is a long theta, at least
    one of $ a_1 $ and~$a'_1 $, say $ a_1 $, is in $Q_1$.
  \end{enumerate}
		
  Now consider the hole $ H_2 $. Since the theta is long, if $ p_2 $
  is in $ Q_2 \cup \{u,v\} $, then at least one antenna of $ H_2 $
  must be in $ Q_2 $ which contradicts (i). Thus, $ p_2 \in
  Q_3$. Therefore, with the same argument as before, at least one of
  $ a_2 $ and $ a'_2 $, say~$ a_2 $ also should be in $ Q_3 $.
		
  Finally, consider the hole $ H_3 $. Notice that $ a_1, p_1 \in Q_1 $
  respectively form a source and a sink for $H_3 $. On the other hand,
  $a_2, p_2 \in Q_3 $ also, respectively form a source and a sink for
  $H_3 $. So, these are the four extrema of $ H_3 $. But then at
  least~three of these four vertices should be consecutive, which is
  impossible. This contradiction finishes the proof of the lemma.
\end{proof}

\section{New examples of non-Burling graphs}
\label{sec:examples}

We are now able to describe non-Burling triangle-free graphs that do
have full star cutsets, so going beyond the method following
from~\cite{Chalopin2014}, where all examples have no star cutsets.

\begin{figure}
  \begin{center}
    \includegraphics[width=6.7cm]{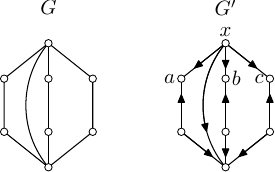}
    \caption{Theta+ and an orientation of it.\label{f:feedback-4}}
  \end{center}
\end{figure}

We call the (non-oriented) graph represented in Figure~\ref{f:feedback-4} (left) \emph{Theta+}. 

\begin{theorem}
	Theta+ is not a Burling graph. 
\end{theorem}

\begin{proof}
	For the sake of contradiction, suppose that $G$ is a Burling graph. So, some orientation of $ G $ can be derived from a Burling tree.  Hence, every $C_4$ of this orientation must contain a pivot, a bottom and two antennas.  One can check that with this condition, up to symmetry, the orientation of $ G $ is as $ G' $ shown in Figure~\ref{f:feedback-4}, right. Note that $a$, $b$ and $c$ are out-neighbors of $x$, so they must be on the same branch of the Burling tree. Therefore, by Lemma~\ref{lem:pre-fullstarcutsetlemma2}, one of $a$, $b$ or $c$ must be the center of a full in-star cutset, a contradiction. 
\end{proof}
 
Notice that
$G$ contains a vertex whose removal yields a tree.  Also, it admits
an orientation that is good for every hole (See Figure~\ref{f:feedback-4}, right).

\begin{figure}
  \begin{center}
    \includegraphics[width=10cm]{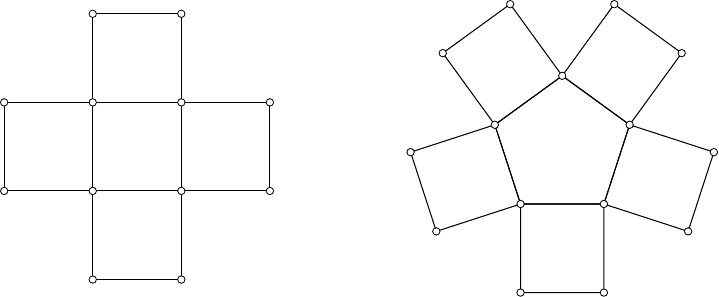}
    \caption{Examples of flowers.\label{f:flowers-3}}
  \end{center}
\end{figure}

A \emph{flower} is a graph $G$ made of a hole $H$ where every
edge $e$ is part of a hole $H_e$.  Moreover, $V(H) \cap V(H_e) = e$,
for all edges $e, f$ of $H$, $V(H_e) \cap V(H_f) = e \cap f$, and the
only edges and vertices of $G$ are those of the $H_e$'s.
In Figure~\ref{f:flowers-3}, two examples of flowers are represented. 

\begin{theorem}
  No flower is a Burling graph. 
\end{theorem}

\begin{proof}
  Suppose $G$ is a flower with a hole $H$ as in the definition.  Let
  $v$ be the pivot of $H$, and $u, w$ the two neighbors of $v$ in
  $H$.   So, $H_{uv}$ and $H$ form a domino, and by
  Lemma~\ref{l:domino}, one of the two vertices $ u $ and $ v $ should be the pivot of one of the two holes, and a subordinate vertex of the other. Notice that $ u $ cannot be a pivot of any of the two holes because $ uv $ is an arc. So, 
  $v$ is a subordinate vertex of $H_{uv}$.
  Similarly, $v$ is a subordinate vertex of $H_{vw}$.  Hence, $H_{vw}$
  and $H_{uv}$ contradict Lemma~\ref{l:dumbbell} (since $H_{vw}$
  and $H_{uv}$ form a dumbbell). 
\end{proof}

A \emph{wheel} is a graph made of hole $H$ called the \emph{rim}
together with a vertex $c$ called the \emph{center} that has at least
three neighbors in $H$.  Wheels are restricted frame graphs (see
Theorem A.1.\ and Figure 7 in \cite{Chalopin2014}). 
It was
claimed by Scott and Seymour in private communication that wheels are
not derived graphs, and independently, Davies also proved it recently. The first written proof of the theorem that we are aware of is in the master's thesis of the first author, see \cite{report}.

\begin{theorem}[Scott and Seymour~\cite{ScottSeymour17}, Pournajafi~\cite{report},
  Davies~\cite{davies2021trianglefree}]
  \label{thm:Wheel-not-Burling}
  No wheel is a Burling graph. 
\end{theorem}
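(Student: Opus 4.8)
The plan is to argue that a wheel, in any orientation that makes it a derived graph, forces a configuration ruled out by the hole-structure lemmas of Section~\ref{sec:holes}. Suppose for contradiction that a wheel $W$ with rim $H$ and center $c$ is a Burling graph, so some orientation of $W$ is derived from a Burling tree $T$. Let $c$ have neighbors $h_1,\dots,h_t$ on $H$ with $t\geq 3$, listed in their cyclic order along the rim; these neighbors split $H$ into $t$ arcs $P_1,\dots,P_t$, where $P_i$ runs from $h_i$ to $h_{i+1}$ (indices mod $t$). Each path $P_i$ together with $c$ forms a hole (as long as $P_i$ has length at least~2; if some $P_i$ is a single edge then $c$, $h_i$, $h_{i+1}$ form a triangle, contradicting Lemma~\ref{lem:no-triangle}), so we get $t$ holes $H_i = c\cup P_i$ through $c$. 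Moreover, for any two spokes $ch_i$ and $ch_j$, the two rim arcs between $h_i$ and $h_j$ together with $c$ form a theta or long theta with apexes $c$ and $h_i$… — but this is not quite the right pairing, so instead I would pair $H_i$ and $H_{i+1}$ directly.

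The key observation is that consecutive holes $H_i$ and $H_{i+1}$ share exactly the edge $ch_{i+1}$ and nothing else, so $H_i\cup H_{i+1}$ (with the common edge $ch_{i+1}$) is precisely a \emph{domino}. By Lemma~\ref{l:domino}, for one endpoint $z\in\{c,h_{i+1}\}$ of the shared edge and one index, $z$ is the pivot of one of the two holes and a subordinate vertex of the other. I would now run this for \emph{every} $i$ around the cycle and extract a contradiction about where the pivots of the $H_i$ can be. Concretely: the center $c$ lies in all $t$ holes, and in each hole $H_i$ it is either the pivot, an antenna, or a subordinate vertex. If $c$ were an antenna of some $H_i$ it would have out-degree (within $H_i$) exactly~1, but $c$ also lies in $H_{i-1}$ and $H_{i+1}$ which attach to $c$ along different spokes, forcing extra arcs at $c$ incompatible with being an antenna; so in each hole $c$ is either the pivot or a subordinate vertex. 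The domino lemma then says that for each consecutive pair, either $c$ is the pivot of exactly one of $H_i, H_{i+1}$ and subordinate in the other, or the shared rim-neighbor $h_{i+1}$ plays the pivot/subordinate role. Chasing this constraint around the $t$-cycle of holes, one finds $c$ cannot consistently be ``pivot of $H_i$'' for a set of indices whose complement is nonempty and also have every consecutive transition satisfied — a parity/covering contradiction analogous to the ``at least three of four extrema consecutive'' trick in Lemma~\ref{lem:theta-pivot}.

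I expect the main obstacle to be the bookkeeping in the last step: precisely pinning down, for a fixed consecutive pair, which of the four cases of Lemma~\ref{l:domino} can occur (who is pivot, who is subordinate, in which hole), ruling out the ones involving $h_{i+1}$ using that $h_{i+1}$ has degree exactly~2 in $H_i$ and in $H_{i+1}$ but the domino lemma's pivot must be a sink while $h_{i+1}$'s two hole-neighbors are rim vertices, and then showing the surviving cases cannot be threaded consistently around all $t\geq 3$ holes. An alternative cleaner route, which I would try first, is: take three spokes $ch_i, ch_j, ch_k$; the three rim arcs between $h_i, h_j, h_k$ together with $c$ as a hub form (after adding the $c$-paths) a structure containing a theta with apexes among $\{h_i, h_j, h_k\}$, and if these arcs are long enough we may invoke Lemma~\ref{lem:theta-pivot} to force one fixed vertex to be the pivot of all relevant holes; combined with Lemma~\ref{lem:pre-fullstarcutsetlemma2} applied to the spokes out of $c$ (the $h_i$ being out-neighbors of $c$ all lie on one branch of $T$), one of the $h_i$ becomes the center of a full in-star cutset of $W$, which is impossible since a wheel is $2$-connected with minimum degree $\geq 3$. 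Whichever version goes through, the contradiction ultimately comes from Lemma~\ref{lem:pre-fullstarcutsetlemma2} or the domino/dumbbell lemmas, and the real work is organizing the case analysis so that it terminates.
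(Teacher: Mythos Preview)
Your proposal reaches for the right tool (the domino lemma on consecutive holes through $c$) but never closes the argument, and the two fallback routes both break. The theta route is a dead end: any theta you build from two spokes and the two rim arcs between $h_i$ and $h_j$ contains the path $h_i c h_j$ of length~$2$, so Lemma~\ref{lem:theta-pivot} (which requires all three paths to have length at least~$3$) does not apply. The in-star-cutset route via Lemma~\ref{lem:pre-fullstarcutsetlemma2} only fires once you know the $h_i$ are \emph{out}-neighbors of $c$, which you never establish. In the main line, there are two slips: an antenna of a hole is a source and hence has out-degree~$2$ in the hole, not~$1$; and you cannot ``rule out $h_{i+1}$'' by saying its hole-neighbors are rim vertices, since in each $H_i$ one of its two neighbors is $c$. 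More importantly, the promised ``parity/covering contradiction'' is never produced, and it is not clear one exists: the domino constraints on consecutive pairs $(H_{i-1},H_i)$ do not obviously conflict, because you have used no structural information about the \emph{rim} itself.

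The missing idea is to analyze the rim $H$ first. It is itself a hole, hence an oriented chandelier with pivot $v$, antennas $u,u'$, bottom $w$, and it decomposes (apart from $uv$ and $u'v$) into two \emph{directed} paths $P_u=u\!\to\!\cdots\!\to\! w$ and $P_{u'}=u'\!\to\!\cdots\!\to\! w$. Now $c$ has at most one neighbor in each of $P_u,P_{u'}$: two neighbors on $P_u$ would give a hole through $c$ and a directed subpath of $P_u$, and such a hole has at most one source (every internal vertex of the subpath has in- and out-degree~$1$, and only the first vertex of the subpath or $c$ can be a source, not both), contradicting the chandelier structure. Triangle-freeness forbids $c$ from seeing $u$ or $u'$, so $t=3$ and the three neighbors of $c$ are $v$, an interior vertex $x$ of $P_u$, and an interior vertex $x'$ of $P_{u'}$. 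From here two domino applications finish it: the holes through the edge $cv$ force $v$ (not $c$, since the pivot must be adjacent to the source $u$ or $u'$) to be the pivot of one of them, which makes $c$ an antenna there and hence $cx\in A(G)$; then the holes through the edge $cx$ force $x$ to be the pivot of the hole containing $w$, so $x$ is a sink there --- impossible, since $x$ is interior to the directed path $P_u$.
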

	
\begin{proof}
  Suppose that a graph $G$ is wheel with rim $H$ and center $c$.  Let
  $v$ be the pivot of $H$, $u$ and $u'$ its antennas, and $w$ its
  bottom.  So, there is an edge-partition of $H$ into a directed path  $P_u$ from $u$
  to $w$, a directed path $P_{u'}$ from $u'$ to $w$ and the edges $uv$
  and $u'v$.

  We claim that $c$ has at most one neighbor in $P_u$.  Otherwise, $c$
  and a subpath $P_u$ form a hole $J$, and since $P_u$ is directed,
  this hole cannot contain two sources, a contradiction. Similarly,
  $P_{u'}$ contains at most one neighbor of $c$.  Hence, the only
  possibility for $c$ to have at least three neighbors in $H$ is that
  $c$ is adjacent to $v$, to one internal vertex of $P_u$ and to one
  internal vertex of $P_{u'}$. Notice that $ c $ cannot be adjacent to $ u $ or $ u' $ otherwise there will be a triangle in $ G $.

  Two holes $H_u$ and $H_{u'}$ of $G$, containing respectively $u$ and
  $u'$, go through the edge $vc$, forming a domino.  Since $c$ is not
  adjacent to the sources of $u$ and $u'$, it can be the pivot of
  neither $H_u$ nor $H_{u'}$.  Hence, by Lemma~\ref{l:domino} $v$ must
  be the pivot of either $H_u$ or $H_{u'}$, say of $H_u$ up to
  symmetry.  Let $x$ be the neighbor of $c$ in $P_u$.  Since $v$ is the
  pivot of $H_u$, $cx$ is an arc of $G$.  Since $x$ is not the pivot
  of $H_u$, by Lemma~\ref{l:domino} $x$ is the pivot of $H_w$,
  that is the hole of $G$ containing $c$ and $w$.  Hence, $x$ is a
  sink of $H_v$, a contradiction to $P_u$ being directed from $u$ to
  $w$.    
\end{proof}

\begin{figure}
	\begin{center}
		\includegraphics[width=10cm]{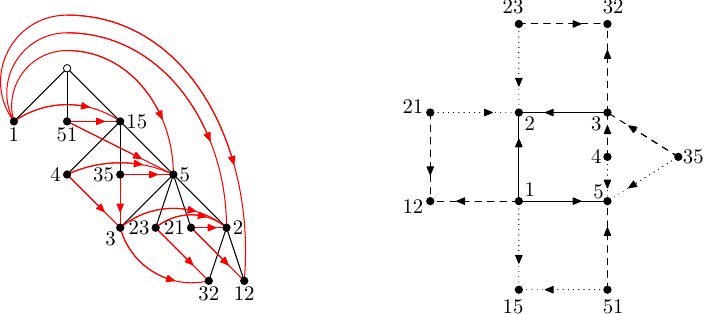}\\\rule{0em}{2ex}\\
		\includegraphics[width=10cm]{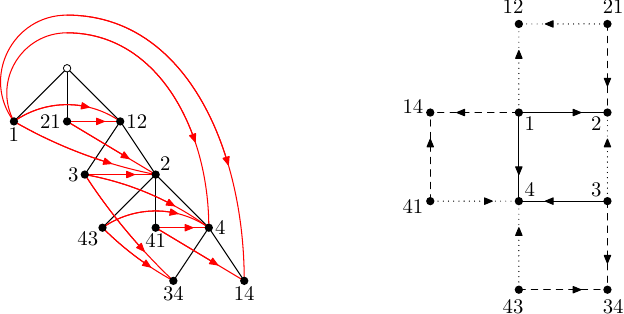}\\\rule{0em}{2ex}\\
		\includegraphics[width=10cm]{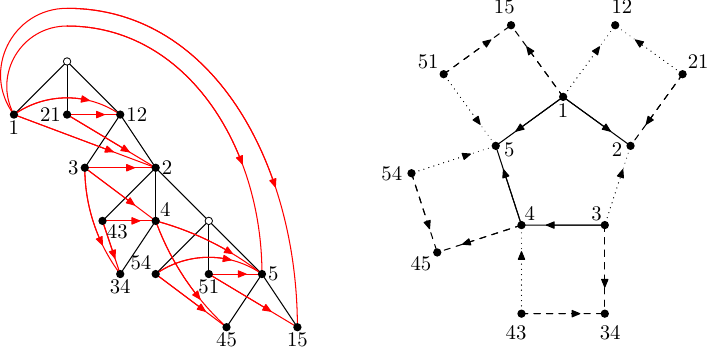}
		
		\caption{Burling graphs close to flowers.\label{f:flowers-6}}
	\end{center}
\end{figure}

\begin{figure}
	\begin{center}
		\includegraphics[width=12cm]{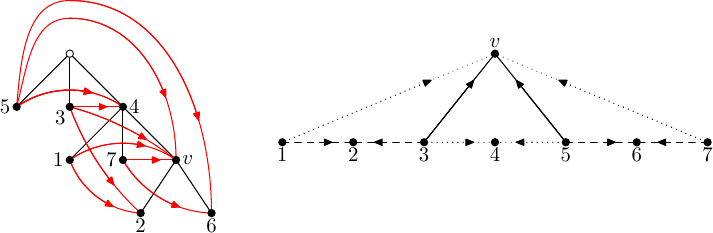}\\\rule{0em}{4ex}\\
		\includegraphics[width=12cm]{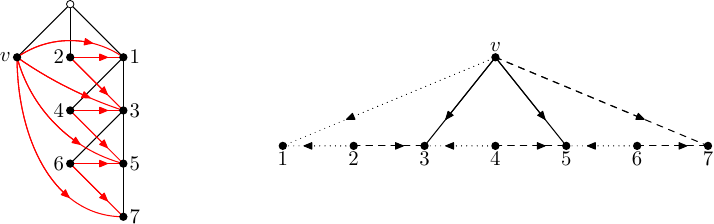}\\\rule{0em}{4ex}\\
		\includegraphics[width=12cm]{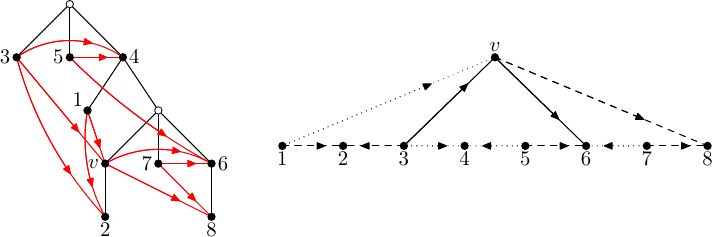}
		
		\caption{Burling graphs close to wheels.\label{f:fan}}
	\end{center}
\end{figure}

As shown in Figures~\ref{f:flowers-6} and~\ref{f:fan}, graphs that are
quite close to flowers or wheels can be Burling graphs.

In \cite{Trotignon13}, Trotignon asked whether the class of wheel-free graphs is $\chi$-bounded (see Question 5.1 of \cite{Trotignon13}). In \cite{ScottSeymour17}, Scott and Seymour made a closely related conjecture that the class of all graphs that for all $ k $ that do not contain an induced cycle such that
some vertex has at least $k$ neighbours on the cycle is $\chi$-bounded (see 12.16 in \cite{ScottSeymour17}). Theorem~\ref{thm:Wheel-not-Burling} answers in negative to the former and disproves the latter.

The next theorem fully characterizes subdivisions of $K_4$ that are Burling
graphs.

\begin{theorem}
  \label{l:K4}
  Let $G$ be a non-oriented graph obtained from $K_4$ by subdividing
  edges.  Then $G$ is a Burling graph if and only if $G$ contains four
  vertices $a$, $b$, $c$ and $d$ of degree~3 such that
  $ab, ac \in E(G)$ and $ad, bc\notin E(G)$.
\end{theorem}

\begin{proof}
  Suppose that $G$ is a Burling graph.  Let $a$, $b$, $c$ and $d$ be
  the four vertices of degree~3 of $G$.  If $G[\{a, b, c, d\}]$
  contains no vertex of degree at least~2, then $G$ is isomorphic to
  one of the graphs represented in Figure~\ref{f:nonBurlingK4-4}, so
  $G$ has no star cutset, a contradiction to Theorem~\ref{th:chalop}.
  So, up to symmetry, we may assume that $a$ has degree at least~2 in
  $G[\{a, b, c, d\}]$, so up to symmetry $ab, ac \in E(G)$.  If
  $bc\in E(G)$, then $G$ contains a triangle, a contradiction to
  Lemma~\ref{lem:no-triangle}.  So, $bc\notin E(G)$.  If $ad\in E(G)$,
  then $G$ is a wheel, a contradiction to
  Lemma~\ref{thm:Wheel-not-Burling}.  So, $ad\notin E(G)$.  We proved
  that $ab, ac \in E(G)$ and $ad, bc\notin E(G)$.

  Conversely, if we suppose that $ab, ac \in E(G)$ and
  $ad, bc\notin E(G)$, then $G$ is obtained by subdividing dashed
  edges of the the graph represented in
  Figure~\ref{f:firstExample-14}.  It is therefore a Burling graph as
  explained after the proof of Lemma~\ref{l:subdivAll}.
\end{proof}


\end{document}